\newtheorem{theorem}{Theorem}
\newtheorem{lemma}[theorem]{Lemma}
\newtheorem{corollary}[theorem]{Corollary}
\newtheorem{proposition}[theorem]{Proposition}
\newtheorem{example}[theorem]{Example}
\newtheorem{remark}[theorem]{Remark}
\newcommand{\tto}{\twoheadrightarrow}
\begin{document}
\title[Infinite rank module categories]
{Infinite rank module categories over
finite dimensional\\ $\mathfrak{sl}_2$-modules
in Lie-algebraic context}

\author[V.~Mazorchuk and X.~Zhu]
{Volodymyr Mazorchuk and Xiaoyu Zhu}

\begin{abstract}
We study locally finitary realizations of simple transitive module 
categories of infinite rank over the monoidal 
category $\mathscr{C}$ of finite dimensional modules for the complex
Lie algebra $\mathfrak{sl}_2$. Combinatorics of such 
realizations is governed by six infinite Coxeter diagrams.
We show that five of these are realizable in our setup,
while one (type $B_\infty$) is not. We also describe the
$\mathscr{C}$-module subcategories 
of $\mathfrak{sl}_2$-mod generated by simple modules
as well as the $\mathscr{C}$-module categories
coming from the natural action of $\mathscr{C}$
on the categories of finite dimensional modules
over Lie subalgebras of $\mathfrak{sl}_2$.
\end{abstract}

\maketitle

\section{Introduction and description of the results}\label{s1}

\subsection{Motivation and formulation of the problem}\label{s1.1}

Given a finite subgroup $G$ of $SU(2)$, the monoidal category of
finite dimensional $SU(2)$-modules acts, by restriction and 
tensor product, on the category of finite dimensional $G$-modules.
This observation is the basis of the so-called McKay correspondence,
see \cite{Mc}, which, in particular, classifies finite subgroups 
of $SU(2)$ in terms of extended Dynkin diagrams and also connects 
them to orbifold  singularities and simple Lie groups.

Consider the simple complex Lie algebra $\mathfrak{sl}_2$. For this
Lie algebra, we have the monoidal category $\mathscr{C}$ of all 
finite dimensional modules, with the obvious monoidal  structure.
The category $\mathscr{C}$ is the obvious Lie-algebraic analogue
of the category of finite dimensional $SU(2)$-modules.  
The category $\mathscr{C}$ is an interesting and important 
object of study, see \cite{EGNO,EK,EO,KO} and reference therein.

From the point of view of representation theory,
of particular interest is the problem to study
various types of $\mathscr{C}$-module  categories.
The most basic of these are the so-called
{\em simple} $\mathscr{C}$-module  categories
(a.k.a. {\em simple transitive} in the terminology of
\cite{MM5}).
In particular, simple 
 $\mathscr{C}$-module  categories
of finite rank (that is, with finitely many simple 
objects) were classified in \cite{EO} in terms of
certain graphs (and bilinear forms). As was shown in 
\cite{EK}, the combinatorics of a natural subclass of
these categories is governed by extended Dynkin diagrams, 
similarly to the McKay correspondence.

In this paper we are trying to study 
$\mathscr{C}$-module  categories in the context
of Lie algebras and their representations.
One of our starting questions was: What 
kind of  $\mathscr{C}$-module  categories one
obtains if one looks at the action of $\mathscr{C}$
on the category of finite dimensional modules
over some Lie subalgebra  of $\mathfrak{sl}_2$?
This is not very difficult  to answer by direct
computations as all Lie subalgebras of 
$\mathfrak{sl}_2$ are easy to write down.

One immediate observation, as soon as one looks into
examples, is that all  the categories one obtains
in the natural way have infinite rank
(i.e. have infinitely many isomorphism classes
of indecomposable  objects).  At the level of the
Grothendieck ring,  it is known from \cite{EK}
that the combinatorics of such kind of 
categories is governed by the so-called 
infinite Dynkin diagrams,  see  \cite{HPR,HPR2}
and Subsection~\ref{s3.4}.
These are just six infinite diagrams, called
$A_\infty$, $A_\infty^\infty$, $B_\infty$,
$C_\infty$, $D_\infty$ and $T_\infty$.
It is then natural to ask: Do all of them appear?
If one of them  appears, then in how many different 
versions? If one of them  appears, then what
can one say about the properties of the corresponding
$\mathscr{C}$-module  category?
These are the questions which we ask
and address in the present paper.

\subsection{Setup  and  results}\label{s1.2}

For the sake of increased flexibility, we adopt the following
setup: we work with a finite-dimensional Lie algebra 
$\mathfrak{g}$ with a fixed surjective Lie algebra homomorphism
$\varphi:\mathfrak{g}\to\mathfrak{sl}_2$. In this situation,
the pullback along $\varphi$ allows us to consider 
the monoidal category $\mathscr{C}$ as  a monoidal subcategory 
of the category of all  $\mathfrak{g}$-modules.

Further, we are interested in $\mathscr{C}$-module categories
$\mathcal{M}$ that are subcategories of 
$\mathfrak{g}$-Mod, closed under the natural action of 
$\mathscr{C}$, which have the following properties:
\begin{itemize}
\item $\mathcal{M}$ is locally finitary in the sense that
all hom-spaces are finite dimensional;
\item $\mathcal{M}$ is additive, idempotent split
and Krull-Schmidt;
\item $\mathcal{M}$ is cyclic, in the sense that there exists
an object $M\in \mathcal{M}$ such that $\mathcal{M}$
coincides with the additive closure of $\mathscr{C}\cdot M$.
\end{itemize}
Note that, automatically, such $\mathcal{M}$ contains 
at most countably many indecomposable objects, up to isomorphism.
A typical example of such $\mathcal{M}$ is the left regular
$\mathscr{C}$-module category ${}_\mathscr{C}\mathscr{C}$.

In this setup, we obtain the following results about the
realization of combinatorially different types of
$\mathscr{C}$-module categories:
\begin{itemize}
\item Type $A_\infty$ is realized via the left regular
$\mathscr{C}$-module category ${}_\mathscr{C}\mathscr{C}$,
moreover, such realization is unique up to equivalence,
see Proposition~\ref{prop-s4.2-1} and Proposition~\ref{prop-s4.2-2}.
\item Type $A_\infty^\infty$ has uncountably many
pairwise non-equivalent realizations via semi-simple categories
which can be constructed already inside $\mathfrak{sl}_2$-mod, see 
Proposition~\ref{prop-s5.7-1} and Proposition~\ref{prop-s5.8-1}.
Moreover, we also construct a simple transitive realization
of this type via a non semi-simple category, see 
Proposition~\ref{lem-s5.87-2}.
\item Type $B_\infty$ is not realizable in our setup, see
Proposition~\ref{prop-s6.2-1}.
\item Type $C_\infty$ admits a realization via projective-injective
modules in the BGG category $\mathcal{O}$ for $\mathfrak{sl}_2$, 
see Proposition~\ref{prop-s7.7-1}. We note that 
any realization of this type must have a non semi-simple
underlying category.
\item Type $D_\infty$ admits a realization via 
$\mathfrak{sl}_2$-Harish-Chandra modules over 
the semi-direct product 
$\mathfrak{g}=\mathfrak{sl}_2\ltimes L(4)$, 
see Proposition~\ref{prop-s8.7-1}. We also 
show that every simple transitive realization 
of this type forces the underlying category to 
be semi-simple, see Proposition~\ref{prop-s8.8-1}.
\item Type $T_\infty$ admits a realization via 
Whittaker $\mathfrak{sl}_2$-modules and
also via $\mathfrak{sl}_2$-Harish-Chandra modules over 
the Schr{\"o}dinger Lie algebra, 
see Proposition~\ref{prop-s9.7-1}. We also 
show that every simple transitive realization 
of this type forces the underlying category to 
be semi-simple, see Proposition~\ref{prop-s9.8-1}.
\end{itemize}

Several of our realizations are taken from the literature,
especially from \cite{MMr,MMr2}.
Additionally, we describe all $\mathscr{C}$-module categories 
inside $\mathfrak{sl}_2$-mod generated by simple modules
and also the $\mathscr{C}$-module categories
coming from the natural action of $\mathscr{C}$
on the categories of finite dimensional modules
over Lie subalgebras of $\mathfrak{sl}_2$.

Our non-semi-simple simple transitive realization of type
$A_\infty^\infty$ is especially interesting. This is because
applying $\mathscr{C}$ to simple objects in its 
abelianization never reaches non-zero projective objects.
This is in a very sharp contrast to representations of
finitary rigid monoidal categories, cf. 
\cite[Lemma~12]{MM5} and \cite[Theorem~2]{KMMZ}.
In fact, as far as we know, this is the first example
of such a phenomenon.

\subsection{Structure of the paper}\label{s1.3}

The paper is organized as follows:
Section~\ref{s2} contains all necessary preliminaries.
In Section~\ref{s3} we recall the classifications,
characterizations and combinatorics of Dynkin diagrams, 
both classical, affine and infinite.
Section~\ref{s4} studies realizations of type $A_\infty$.
Section~\ref{s5} studies realizations of type $A_\infty^\infty$.
Section~\ref{s7} studies realizations of type $C_\infty$.
Section~\ref{s8} studies realizations of type $D_\infty$.
Section~\ref{s9} studies realizations of type $T_\infty$.
Section~\ref{s6} discusses realizations of type $B_\infty$.
Finally, Section~\ref{s10} describes the
$\mathscr{C}$-module categories 
inside $\mathfrak{sl}_2$-mod generated by simple modules
and also the $\mathscr{C}$-module categories
coming from the natural action of $\mathscr{C}$
on the categories of finite dimensional modules
over Lie subalgebras of $\mathfrak{sl}_2$.
\vspace{5mm}

{\bf Acknowledgement.}

The first author is partially supported by the Swedish
Research Council.
\vspace{5mm}

\section{Monoidal category of finite dimensional 
$\mathfrak{sl}_2$-modules and its birepresentations}\label{s2}

\subsection{Preliminaries}\label{s2.1}

We work over the field $\mathbb{C}$ of complex numbers
and, as usual, write $\mathbb{C}^*$ for $\mathbb{C}\setminus\{0\}$.
For a Lie algebra $\mathfrak{g}$, we denote by 
$U(\mathfrak{g})$ the universal enveloping algebra
of $\mathfrak{g}$
and by $Z(\mathfrak{g})$ the center of $U(\mathfrak{g})$.
As usual, we denote by $\mathfrak{g}$-Mod the category of 
all $\mathfrak{g}$-modules and by $\mathfrak{g}$-mod 
the category of all finitely generated $\mathfrak{g}$-modules.

For all details on monoidal categories and their representations,
we refer to \cite{EGNO}.

\subsection{Finite dimensional 
$\mathfrak{sl}_2$-modules}\label{s2.2}

Consider the Lie algebra $\mathfrak{sl}_2$ with the 
standard basis $\{e,f,h\}$, where 
\begin{displaymath}
e=\left(\begin{array}{cc}0&1\\0&0\end{array}\right),\quad
f=\left(\begin{array}{cc}0&0\\1&0\end{array}\right),\quad
h=\left(\begin{array}{cc}1&0\\0&-1\end{array}\right).
\end{displaymath}
For details on the Lie algebra $\mathfrak{sl}_2$
and its modules we refer to \cite{Maz1}.

Denote by 
$\mathscr{C}$ the category of all finite dimensional 
$\mathfrak{sl}_2$-modules. The category $\mathscr{C}$
is semi-simple with simple objects $L(m)$, where $m\in\mathbb{Z}_{\geq 0}$.
The $\mathfrak{sl}_2$-module $L(m)$ is the unique simple 
$\mathfrak{sl}_2$-module of dimension $m+1$.  The module
$L(m)$ is a highest weight module of highest weight $m$.
Here a weight means just an $h$-eigenvalue.

The category $\mathscr{C}$ is monoidal with respect to the usual
tensor product of $\mathfrak{sl}_2$-modules. The monoidal unit is
the trivial $\mathfrak{sl}_2$-module $L(0)$. Recall the Clebsch-Gordan
coefficients for $\mathfrak{sl}_2$: for $m,n\in\mathbb{Z}_{\geq 0}$,
the $\mathfrak{sl}_2$-module $L(m)\otimes_\mathbb{C}L(n)$ decomposes as
\begin{displaymath}
L(|m-n|)\oplus L(|m-n|+2)\oplus L(|m-n|+4)\oplus \dots \oplus 
L(m+n-2)\oplus L(m+n).
\end{displaymath}
The monoidal category $\mathscr{C}$ is rigid and symmetric.
All objects of $\mathscr{C}$ are self-dual. The category 
$\mathscr{C}$ is generated by $L(1)$ as a monoidal category.
The category $\mathscr{C}$ is simple in the sense that it does
not have any non-trivial monoidal ideals.

To simplify notation, for every $i\in\mathbb{Z}_{\geq 0}$,
we will sometimes denote by $F_i$ the object $L(i)$ of $\mathscr{C}$.

\subsection{Locally finitary module categories over 
$\mathscr{C}$}\label{s2.3}

Let $\mathcal{M}$ be a (left) module category over $\mathscr{C}$.
We will say that $\mathcal{M}$ is {\em locally finitary}
provided that the following conditions are satisfied:
\begin{itemize}
\item $\mathcal{M}$ is $\mathbb{C}$-linear, additive, idempotent
split and Krull-Schmidt;
\item $\mathcal{M}$ has at most countably many indecomposable
objects, up to isomorphism;
\item all hom-spaces in $\mathcal{M}$ are finite dimensional
(over $\mathbb{C}$).
\end{itemize}
A typical example of a locally finitary 
module category over $\mathscr{C}$ is $\mathcal{M}=\mathscr{C}$
with respect to the left regular action.

Following \cite{MM5}, we will say that $\mathcal{M}$ is {\em transitive}
provided that, for any indecomposable objects $X,Y\in \mathcal{M}$,
there exists $F\in \mathscr{C}$ such that $Y$ is isomorphic to a summand
of $F(X)$. We will say that $\mathcal{M}$ is {\em simple transitive}
provided that $\mathcal{M}$ does not have any non-trivial 
$\mathscr{C}$-invariant ideals. 

Let $\{X_i\,:\,i\in I\}$ be a complete and irredundant set of 
representatives of the isomorphism classes of indecomposable 
objects in $\mathcal{M}$. Then, with any $F\in \mathscr{C}$, we can associate
an $I\times I$-matrix $[F]=(m_{i,j})_{i,j\in I}$ with 
non-negative integer coefficients in which $m_{i,j}$ is defined
as the multiplicity of $X_i$ as a summand of $F(X_j)$.
Then $\mathcal{M}$ is transitive if and only if, for all
$i,j\in I$, there is some power $[F_1]^k$ of $[F_1]$ such that
the $(i,j)$-th coefficient in $[F_1]^k$ is non-zero.

We also define the {\em action graph} $\Lambda_\mathcal{M}$ of 
$\mathcal{M}$ as an oriented graph (possibly with loops, but
without multiple oriented edges) with vertices $I$, in which 
there is an oriented edge from $i$ to $j$ if and only if
there is $F\in\mathscr{C}$ such that the coefficient 
$m_{j,i}$ in $[F]$ is non-zero. By taking as $F$ the unit object
in $\mathscr{C}$, we see that $\Lambda_\mathcal{M}$ has an oriented
loop at each vertex. Then $\mathcal{M}$ is transitive if and only if
$\Lambda_\mathcal{M}$ is strongly connected.

\subsection{Grothendieck ring of $\mathscr{C}$}\label{s2.4}

Consider the polynomial algebra $\mathbb{Z}[x]$. For
$i\geq 0$, define the polynomials $R_i(x)$ recursively as follows:
\begin{displaymath}
R_0(x)=1; \quad
R_1(x)=x; \quad
R_i(x)=xR_{i-1}(x)-R_{i-2}(x),
\text{ for } i\geq 2.
\end{displaymath}
These polynomials are sometimes called {\em ultraspherical polynomials},
see for example \cite{EO}. They can also be considered as renormalized
{\em Chebyshev polynomials of the second kind}, see \cite{Riv}.

The Grothendieck ring $[\mathscr{C}]$ of $\mathscr{C}$ is isomorphic
to $\mathbb{Z}[x]$ by sending the class of $L(i)$ in $[\mathscr{C}]$
to $R_i(x)$, for $i\geq 0$.
Consequently, for any locally finitary $\mathscr{C}$-module
category $\mathcal{M}$, we have $[F_i]=R_i([F_1])$, for $i\geq 0$.

Given a locally finitary $\mathscr{C}$-module category 
$\mathcal{M}$, the split Grothendieck group
$[\mathcal{M}]_\oplus$ of $\mathcal{M}$ has the obvious
structure of a $[\mathscr{C}]$-module. 

\subsection{Setup}\label{s2.5}

Let $\mathfrak{g}$ be a finite-dimensional Lie algebra 
with a fixed surjective Lie algebra homomorphism
$\varphi:\mathfrak{g}\to\mathfrak{sl}_2$. The trivial
example is $\mathfrak{g}=\mathfrak{sl}_2$ with
$\varphi$ being the identity. The pullback
along $\varphi$ allows us to consider 
$\mathscr{C}$ as  a (monoidal) subcategory of $\mathfrak{g}$-Mod,
up to monoidal equivalence.

We will be interested in $\mathscr{C}$-module categories of
the form $\mathcal{M}\subset \mathfrak{g}$-Mod in which the
$\mathscr{C}$-module structure is given by tensoring 
over $\mathbb{C}$ and then using the standard comultiplication
for $U(\mathfrak{g})$.  
We will be especially interested in 
such $\mathcal{M}$ given by the
additive closure of $\mathscr{C}\cdot M$, where 
$M\in \mathfrak{g}$-Mod, with particular emphasis on the case
when $M$ is simple. We will denote this kind of category by
$\mathrm{add}(\mathscr{C}\cdot M)$.

\subsection{Projective functors}\label{s2.6}

The category $\mathscr{C}$ is closely connected to 
the monoidal category of projective endofunctors 
for $\mathfrak{sl}_2$,
see \cite{BG}. The latter are endofunctors of the  full
subcategory $\mathcal{Z}$ of $\mathfrak{sl}_2$-Mod that consists of
all modules, the action of $Z(\mathfrak{sl}_2)$ on which is 
locally finite. Recall that $Z(\mathfrak{sl}_2)$ is 
a polynomial algebra generated by the {\em Casimir element}
$\mathtt{c}=(h+1)^2+4fe$. By \cite[Subsection~1.8]{BG}, 
the category $\mathcal{Z}$ is a direct sum, over all
$\theta\in\mathbb{C}$, of the full subcategories
$\mathcal{Z}_\theta$ of $\mathcal{Z}$ consisting of 
all modules on which $\mathtt{c}-\theta$ acts locally
nilpotently.

Let us recall classification of indecomposable projective functors
from \cite[Theorem~3.3]{BG}. For this, recall that $\mathfrak{sl}_2$
admits a triangular decomposition
\begin{displaymath}
\mathfrak{sl}_2=\mathfrak{n}_-\oplus \mathfrak{h}\oplus 
\mathfrak{n}_+, 
\end{displaymath}
where $e$ spans $\mathfrak{n}_+$, $f$ spans $\mathfrak{n}_-$
and $h$ spans $\mathfrak{h}$. We can identify 
$\mathfrak{h}^*$ with $\mathbb{C}$ by sending 
$\lambda\in \mathfrak{h}^*$ to $\lambda(h)$.
Then, for each $\lambda\in \mathbb{C}$, we have the corresponding
Verma module $\Delta(\lambda)$ with highest weight $\lambda$
and its unique simple quotient $L(\lambda)$.
Both these modules belong to the BGG category $\mathcal{O}$,
see \cite{BGG,Hu}. There, we also have the indecomposable 
projective cover $P(\lambda)$ of $L(\lambda)$. Note that
$P(\lambda)=\Delta(\lambda)$ unless $\lambda\in\{-2,-3,\dots\}$
and $P(\lambda)=L(\lambda)$ unless $\lambda\in(\mathbb{Z}\setminus\{-1\})$.

An element $\lambda\in \mathbb{C}$ is called {\em dominant}
provided that $\lambda\not\in\{-2,-3,\dots\}$ and {\em anti-dominant}
provided that $\lambda\not\in\mathbb{Z}_{\geq 0}$.
According to \cite[Theorem~3.3]{BG}, for each pair 
$(\lambda,\mu)\in\mathbb{C}^2$ satisfying
\begin{itemize}
\item $\lambda-\mu\in\mathbb{Z}$,
\item $\lambda$ is dominant,
\item $\mu$ is anti-dominant if $\lambda=-1$;
\end{itemize}
there is a unique indecomposable projective functor
\begin{displaymath}
\theta_{\lambda,\mu}:\mathcal{Z}_{(\lambda+1)^2}\to
\mathcal{Z}_{(\mu+1)^2}
\end{displaymath}
such that $\theta_{\lambda,\mu}\Delta(\lambda)=P(\mu)$.
All indecomposable projective endofunctors of 
$\mathcal{Z}$ are of this form.
If $\lambda\not\in\mathbb{Z}$, then $\theta_{\lambda,\mu}$
is an equivalence with inverse $\theta_{\mu,\lambda}$.

We will say that $\theta_{\lambda,\mu}$ is {\em special}
provided that $\lambda\in\frac{1}{2}+\mathbb{Z}$
and $\mu=-\lambda-2$.

\subsection{Abelianization}\label{s2.7}

Let $\mathcal{M}$ be a locally finitary 
$\mathscr{C}$-module category. Then we can
consider the projective abelianization 
$\overline{\mathcal{M}}$ of $\mathcal{M}$,
see \cite[Subsection~3.1]{MM1}. The objects of
$\overline{\mathcal{M}}$ are diagrams
$X\to Y$ over $\mathcal{M}$ and morphisms are 
(solid) commutative squares
\begin{displaymath}
\xymatrix{
X\ar[rr]\ar[d]&&Y\ar[d]\ar@{.>}[dll]\\X'\ar[rr]&&Y'
}
\end{displaymath}
modulo the ideal generated by all those squares
in which the right vertical morphism admits a
factorization via some dotted morphism.
The category
$\overline{\mathcal{M}}$ is a $\mathscr{C}$-module
category, with the $\mathscr{C}$-action inherited from
$\mathcal{M}$. 

If we additionally assume that, for each indecomposable
object $M\in \mathcal{M}$, there are only finitely 
many indecomposable objects $N$, up to isomorphism,
such that $\mathcal{M}(M,N)\neq 0$ and 
there are only finitely 
many indecomposable objects $N$, up to isomorphism,
such that $\mathcal{M}(N,M)\neq 0$, then
the category $\overline{\mathcal{M}}$
is an abelian length category in which the original
category $\mathcal{M}$ naturally embeds via
$P\mapsto (0\to P)$ and the closure of the
image of this embedding with respect to isomorphism
coincides with the category of projective objects in
$\overline{\mathcal{M}}$. 
In full generality, $\overline{\mathcal{M}}$ is abelian
with the above properties
if and only if $\mathcal{M}$ has weak kernels, see \cite{Fr}.
We will call such categories {\em admissible}.
We refer the reader to \cite{Mac1,Mac2} for more details
on locally finitary categories, where one can also find
some information about how some of the above conditions may be
relaxed.

\section{Generalized Cartan matrices and Dynkin diagrams}\label{s3}

\subsection{Generalized Cartan matrix}\label{s3.1}

Let $I$ be an indexing set (finite or infinite). Recall that 
a {\em generalized Cartan $I\times I$-matrix (GCM)} is a matrix
$C=(c_{i,j})_{i,j\in I}$ such that 
\begin{itemize}
\item $c_{i,i}\leq 2$, for all $i$;
\item $c_{i,j}\in \{0,-1,-2,\dots\}$, for all  $i\neq j$;
\item $c_{i,j}=0$ if and only if $c_{j,i}=0$, for all $i,j$.
\end{itemize}
Given a GCM $C$ on $I$, we can associate to it a
graph $\Gamma_C$ with vertex set $I$ such that,
for all $i\neq j\in I$, there are exactly $-c_{i,j}$ 
oriented edges from $i$ to $j$; moreover, for all $i\in I$, 
there are $2-c_{i,i}$ loops at the vertex $i$.
We denote by $\widetilde{C}$ the adjacency matrix of $\Gamma_C$
(with the convention that each loop corresponds to one on the main 
diagonal) and note that, by definition, $C=2\mathrm{Id}_I-\widetilde{C}$,
where $\mathrm{Id}_I$ is the identity $I\times I$-matrix.
A pair of oppositely oriented edges between $i$ and $j$,
for $i\neq j$, will be simplified to one unoriented edge
between these vertices. By convention, all loops are unoriented.

\begin{example}\label{ex-s3.1-1}
The graph associated to the GCM 
\begin{displaymath}
\left(\begin{array}{cccc}1&-1&0&0\\-2&2&-2&-1\\0&-1&2&-2\\0&-1&-2&2\end{array}\right) 
\end{displaymath}
looks as follows:

\begin{displaymath}
\xymatrix{
\bullet\ar@{-}@(ul,dl)[]\ar@/^2mm/@{-}[rr]&&
\bullet\ar@/^2mm/[rr]\ar@/_2mm/@{-}[rr]\ar@/^7mm/@{-}[rrrr]\ar@/^2mm/[ll]&&
\bullet\ar@/^2mm/@{-}[rr]\ar@/_2mm/@{-}[rr]&&
\bullet\\
}
\end{displaymath}
\end{example}

\subsection{Classical Dynkin diagrams and their characterization}\label{s3.2}

Recall the classical finite Dynkin diagrams, drawn with our convention for the
underlying graph (in all cases the index is the number of vertices):

\resizebox{5cm}{!}{
$
\xymatrix{
A_n:&&\bullet\ar@{-}[r]&\bullet\ar@{-}[r]&\bullet\ar@{-}[r]&\dots\ar@{-}[r]&\bullet
}\qquad\qquad\qquad
$
}
\resizebox{5cm}{!}{
$
\xymatrix{
B_n:&&\bullet\ar@/_2mm/@{-}[r]&\bullet\ar@{-}[r]\ar@/_2mm/[l]&\bullet\ar@{-}[r]&\dots\ar@{-}[r]&\bullet
}
$
}

\resizebox{5cm}{!}{
$
\xymatrix{
C_n:&&\bullet\ar@/_2mm/@{-}[r]\ar@/^2mm/[r]&\bullet\ar@{-}[r]&\bullet\ar@{-}[r]&\dots\ar@{-}[r]&\bullet
}\qquad\qquad\qquad
$
}
\resizebox{5cm}{!}{
$
\xymatrix@R=4mm{
D_n:&&\bullet\ar@{-}[r]&\bullet\ar@{-}[r]\ar@{-}[d]&\bullet\ar@{-}[r]&\dots\ar@{-}[r]&\bullet\\
&&&\bullet&&&
}
$
}

\resizebox{5cm}{!}{
$
\xymatrix@R=4mm{
E_6:&&\bullet\ar@{-}[r]&\bullet\ar@{-}[r]&\bullet\ar@{-}[r]\ar@{-}[d]&
\bullet\ar@{-}[r]&\bullet\\
&&&&\bullet&&
}\qquad\qquad\qquad
$
}
\resizebox{5cm}{!}{
$
\xymatrix@R=4mm{
E_7:&&\bullet\ar@{-}[r]&\bullet\ar@{-}[r]&\bullet\ar@{-}[r]\ar@{-}[d]&
\bullet\ar@{-}[r]&\bullet\ar@{-}[r]&\bullet\\
&&&&\bullet&&
}
$
}

\resizebox{5cm}{!}{
$
\xymatrix@R=4mm{
E_8:&&\bullet\ar@{-}[r]&\bullet\ar@{-}[r]&\bullet\ar@{-}[r]\ar@{-}[d]&
\bullet\ar@{-}[r]&\bullet\ar@{-}[r]&\bullet\ar@{-}[r]&\bullet\\
&&&&\bullet&&
}\qquad\qquad\qquad
$
}
\resizebox{4cm}{!}{
$
\xymatrix@R=4mm{
F_4:&&\bullet\ar@{-}[r]&\bullet\ar@{-}@/_2mm/[r]\ar@/^2mm/[r]&\bullet\ar@{-}[r]&\bullet
}
$
}

\resizebox{2cm}{!}{
$
\xymatrix@R=4mm{
G_2:&\bullet\ar@{-}[r]\ar@/_2mm/[r]\ar@/^2mm/[r]&\bullet
}
$
}

An important invariant of a Dynkin diagram is the so-called
{\em Coxeter number}, usually denote by $\mathtt{h}$, 
given by the following table:
\begin{displaymath}
\begin{array}{c||c|c|c|c|c|c|c|c|c}
\text{type:}&A_n&B_n&C_n&D_n&E_6&E_7&E_8&F_4&G_2\\
\hline 
\mathtt{h}:&n+1&2n&2n&2n-2&12&18&30&12&6
\end{array}
\end{displaymath}

We can now recall the following standard characterization of 
the classical Dynkin diagrams, see \cite[Chapter~4]{Kac}
and \cite[Corollary~3.9]{EO}.

\begin{proposition}\label{prop-s3.2-1}
Let $Q$ be an irreducible matrix with non-negative
integer coefficients and zero diagonal. Then $Q=\widetilde{C}$, for
some classical Dynkin diagram, if and only if
the GCM $2\mathrm{Id}-Q$ is positive definite.
In the latter case, we have  $R_{\mathtt{h}-1}(Q)=0$,
where $\mathtt{h}$ is the Coxeter number of the Dynkin diagram in question.
\end{proposition}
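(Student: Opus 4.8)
The plan is to treat the two assertions separately. For the equivalence, I would simply invoke the classical finite-type classification of generalized Cartan matrices. Irreducibility of $Q$ is the same as indecomposability of $C:=2\mathrm{Id}-Q$, and an indecomposable GCM whose associated symmetric bilinear form is positive definite is of finite type; by the standard classification (see \cite[Chapter~4]{Kac} and \cite[Corollary~3.9]{EO}) the indecomposable finite-type GCMs are precisely the Cartan matrices of the classical diagrams $A_n,\dots,G_2$ drawn above, and conversely each of these is positive definite. The only point to keep in mind is that in the non-simply-laced cases $C$ is not symmetric, so ``positive definite'' is read through the symmetrized form $DC$, where $D$ is the positive diagonal symmetrizer; this matches exactly the finite-type condition.

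For the vanishing $R_{\mathtt{h}-1}(Q)=0$ I would argue spectrally, now knowing that $Q=\widetilde{C}$ is the adjacency matrix of a fixed classical Dynkin diagram. First I record the trigonometric form of the $R_i$: since these are renormalized Chebyshev polynomials of the second kind, a one-line induction on the defining recursion gives
\[
R_i(2\cos\theta)=\frac{\sin\bigl((i+1)\theta\bigr)}{\sin\theta}\qquad(\theta\notin\pi\mathbb{Z}).
\]
In particular $R_{\mathtt{h}-1}$ is monic of degree $\mathtt{h}-1$ with the $\mathtt{h}-1$ distinct real roots $2\cos(k\pi/\mathtt{h})$, $k=1,\dots,\mathtt{h}-1$, so that $R_{\mathtt{h}-1}(x)=\prod_{k=1}^{\mathtt{h}-1}\bigl(x-2\cos(k\pi/\mathtt{h})\bigr)$.

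Next I would bring in the spectrum of $Q$. The key input is the classical fact that the eigenvalues of the Cartan matrix $C$ of a finite Dynkin diagram are $4\sin^2\bigl(\pi e_j/(2\mathtt{h})\bigr)$, where $e_1,\dots,e_n$ are the exponents, which are integers with $1\le e_j\le \mathtt{h}-1$; equivalently, the eigenvalues of $Q=2\mathrm{Id}-C$ are $2\cos(\pi e_j/\mathtt{h})$. Each such eigenvalue is then a root of $R_{\mathtt{h}-1}$, since $R_{\mathtt{h}-1}\bigl(2\cos(\pi e_j/\mathtt{h})\bigr)=\sin(\pi e_j)/\sin(\pi e_j/\mathtt{h})=0$ because $e_j$ is an integer and $0<e_j/\mathtt{h}<1$. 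Finally, as an actual Cartan matrix $C$ is symmetrizable, hence conjugate to the symmetric matrix $D^{1/2}CD^{-1/2}$; thus $C$, and with it $Q=2\mathrm{Id}-C$, is diagonalizable, and a polynomial kills a diagonalizable matrix as soon as it vanishes at every eigenvalue. Therefore $R_{\mathtt{h}-1}(Q)=0$.

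The one genuinely non-formal ingredient is the spectral description of $C$ via the exponents and the Coxeter number; this is what I expect to be the main obstacle, although it is entirely classical and can be cited. If a self-contained treatment is preferred, one can instead verify $R_{\mathtt{h}-1}(Q)=0$ directly on the finite list $A_n,B_n,C_n,D_n,E_6,E_7,E_8,F_4,G_2$: for the infinite families the adjacency matrix is tridiagonal (with one fork in type $D_n$), so its characteristic polynomial is computed by the same Chebyshev recursion and its roots read off, while the five exceptional cases are finite computations. In either approach the diagonalizability step must not be skipped, since $R_{\mathtt{h}-1}$ has only simple roots and the conclusion would fail in the presence of a nontrivial Jordan block.
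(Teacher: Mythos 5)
The paper does not actually prove this proposition: it is stated as a ``standard characterization'' and delegated wholesale to \cite[Chapter~4]{Kac} and \cite[Corollary~3.9]{EO}. Your write-up is therefore not an alternative to the paper's argument but a self-contained substitute for the citation, and as such it is correct. The first half is the finite-type classification of indecomposable GCMs, which you rightly only cite; your remark that ``positive definite'' must be read through the symmetrized form $DC$ rather than the naive quadratic form $x^{T}Cx$ is not pedantry but essential --- for $G_2$ one has $x^{T}Cx=0$ at $x=(1,1)^{T}$, so the proposition as literally stated needs exactly this reading (or Kac's equivalent ``all principal minors positive''). The second half is a clean spectral argument: the identity $R_i(2\cos\theta)=\sin((i+1)\theta)/\sin\theta$ follows from the recursion as you say, the eigenvalues of $Q=2\mathrm{Id}-C$ are $2\cos(\pi e_j/\mathtt{h})$ by the classical description of the Cartan spectrum via the exponents (which holds in the non-simply-laced cases too, as one checks on $B_n$, $F_4$, $G_2$), each such value is a root of $R_{\mathtt{h}-1}$ since $1\le e_j\le \mathtt{h}-1$, and diagonalizability of $Q$ via the symmetrizer closes the argument. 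You are also right to insist that the diagonalizability step cannot be omitted, since $R_{\mathtt{h}-1}$ has only simple roots. The one external input is the exponent description of the spectrum; it is genuinely classical, and your fallback of verifying $R_{\mathtt{h}-1}(\widetilde{C})=0$ case by case on the finite list is a legitimate way to avoid it. No gaps.
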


In the context of representations of monoidal categories,
classical Dynkin diagrams appeared, for example, 
in \cite{KO,KMMZ}.

\subsection{Affine Dynkin diagrams and their characterization}\label{s3.3}

Next, recall the affine Dynkin diagrams (also known as generalized Euclidean diagrams):

\resizebox{6cm}{!}{
$
\xymatrix@R=4mm{
\widetilde{A}_n:&&\bullet\ar@{-}[r]&\bullet\ar@{-}[r]&
\bullet\ar@{-}[r]&\dots\ar@{-}[r]&\bullet\ar@{-}[r]&\bullet\\
&&&\bullet\ar@{-}[lu]\ar@{-}[rrrru]&&&
}\qquad\qquad\qquad
$
}
\resizebox{3cm}{!}{
$
\xymatrix@R=4mm{
\widetilde{A}_{11}:&&
\bullet\ar@{-}[r]\ar@/_2mm/[r]\ar@/^2mm/[r]\ar@/^5mm/[r]&\bullet
}\qquad\qquad\qquad
$
}
\resizebox{3cm}{!}{
$
\xymatrix@R=4mm{
\widetilde{A}_{12}:&&\bullet\ar@/_2mm/@{-}[r]\ar@/^2mm/@{-}[r]&\bullet
}\qquad\qquad\qquad
$
}

\resizebox{6cm}{!}{
$
\xymatrix@R=4mm{
\widetilde{B}_n:&&\bullet\ar@/_2mm/@{-}[r]&
\bullet\ar@{-}[r]\ar@/_2mm/[l]&\bullet\ar@{-}[r]&\dots\ar@{-}[r]&
\bullet\ar@/_2mm/@{-}[r]\ar@/^2mm/[r]&\bullet
}\qquad\qquad\qquad
$
}
\resizebox{6cm}{!}{
$
\xymatrix@R=4mm{
\widetilde{BC}_n:&&\bullet\ar@/_2mm/@{-}[r]&
\bullet\ar@{-}[r]\ar@/_2mm/[l]&\bullet\ar@{-}[r]&
\dots\ar@{-}[r]&\bullet\ar@/_2mm/@{-}[r]&\bullet\ar@/_2mm/[l]
}\qquad\qquad\qquad
$
}

\resizebox{6cm}{!}{
$
\xymatrix@R=4mm{
\widetilde{C}_n:&&\bullet\ar@/_2mm/@{-}[r]\ar@/^2mm/[r]&
\bullet\ar@{-}[r]&\bullet\ar@{-}[r]&\dots\ar@{-}[r]&
\bullet\ar@/_2mm/@{-}[r]&\bullet\ar@/_2mm/[l]
}\qquad\qquad\qquad
$
}
\resizebox{6cm}{!}{
$
\xymatrix@R=4mm{
\widetilde{BD}_n:&&\bullet\ar@{-}[r]&
\bullet\ar@{-}[r]\ar@{-}[d]&\bullet\ar@{-}[r]&
\dots\ar@{-}[r]&\bullet\ar@/_2mm/@{-}[r]\ar@/^2mm/[r]&\bullet\\
&&&\bullet&&&&&&
}\qquad\qquad\qquad
$
}

\resizebox{6cm}{!}{
$
\xymatrix@R=4mm{
\widetilde{D}_n:&&\bullet\ar@{-}[r]&
\bullet\ar@{-}[r]\ar@{-}[d]&\bullet\ar@{-}[r]&\dots\ar@{-}[r]&
\bullet\ar@{-}[r]&\bullet\ar@{-}[r]\ar@{-}[d]&\bullet\\
&&&\bullet&&&&\bullet&
}\qquad\qquad\qquad
$
}
\resizebox{6cm}{!}{
$
\xymatrix@R=4mm{
\widetilde{CD}_n:&&\bullet\ar@{-}[r]&
\bullet\ar@{-}[r]\ar@{-}[d]&\bullet\ar@{-}[r]&
\dots\ar@{-}[r]&\bullet\ar@/_2mm/@{-}[r]&\bullet\ar@/_2mm/[l]\\
&&&\bullet&&&&&&
}\qquad\qquad\qquad
$
}

\resizebox{6cm}{!}{
$
\xymatrix@R=4mm{
\widetilde{E}_6:&&\bullet\ar@{-}[r]&\bullet\ar@{-}[r]&\bullet\ar@{-}[r]\ar@{-}[d]&
\bullet\ar@{-}[r]&\bullet\\
&&&&\bullet\ar@{-}[r]&\bullet&
}\qquad\qquad\qquad
$
}
\resizebox{6cm}{!}{
$
\xymatrix@R=4mm{
\widetilde{E}_7:&&\bullet\ar@{-}[r]&\bullet\ar@{-}[r]&
\bullet\ar@{-}[r]&\bullet\ar@{-}[r]\ar@{-}[d]&
\bullet\ar@{-}[r]&\bullet\ar@{-}[r]&\bullet\\
&&&&&\bullet&&
}
$
}

\resizebox{7cm}{!}{
$
\xymatrix@R=4mm{
\widetilde{E}_8:&&\bullet\ar@{-}[r]&\bullet\ar@{-}[r]&\bullet\ar@{-}[r]\ar@{-}[d]&
\bullet\ar@{-}[r]&\bullet\ar@{-}[r]&\bullet\ar@{-}[r]&\bullet\\
&&&&\bullet&&
}\qquad\qquad\qquad
$
}
\resizebox{5cm}{!}{
$
\xymatrix@R=4mm{
\widetilde{F}_{41}:&&\bullet\ar@{-}[r]&
\bullet\ar@{-}[r]&\bullet\ar@{-}@/_2mm/[r]\ar@/^2mm/[r]&\bullet\ar@{-}[r]&\bullet
}\qquad\qquad\qquad
$
}

\resizebox{5cm}{!}{
$
\xymatrix@R=4mm{
\widetilde{F}_{42}:&&\bullet\ar@{-}[r]&\bullet\ar@{-}@/_2mm/[r]\ar@/^2mm/[r]&\bullet\ar@{-}[r]&\bullet\ar@{-}[r]&\bullet
}\qquad\qquad\qquad
$
}
\resizebox{3cm}{!}{
$
\xymatrix@R=4mm{
\widetilde{G}_{21}:&\bullet\ar@{-}[r]&
\bullet\ar@{-}[r]\ar@/_2mm/[r]\ar@/^2mm/[r]&
\bullet
}\qquad\qquad
$
}
\resizebox{3cm}{!}{
$
\xymatrix@R=4mm{
\widetilde{G}_{22}:&\bullet\ar@{-}[r]\ar@/_2mm/[r]\ar@/^2mm/[r]&
\bullet\ar@{-}[r]&\bullet
}\qquad\qquad
$
}

\resizebox{5cm}{!}{
$
\xymatrix@R=4mm{
\widetilde{L}_{n}:&&\bullet\ar@{-}[r]\ar@{-}@(ul,dl)[]&
\bullet\ar@{-}[r]&\dots\ar@{-}[r]&\bullet\ar@{-}@(ur,dr)[]
}\qquad\qquad\qquad
$
}
\resizebox{6cm}{!}{
$
\xymatrix@R=4mm{
\widetilde{BL}_n:&&\bullet\ar@/_2mm/@{-}[r]&
\bullet\ar@{-}[r]\ar@/_2mm/[l]&\bullet\ar@{-}[r]&\dots\ar@{-}[r]&
\bullet\ar@{-}@(ur,dr)[]
}\qquad\qquad\qquad
$
}

\resizebox{6cm}{!}{
$
\xymatrix@R=4mm{
\widetilde{CL}_n:&&\bullet\ar@/_2mm/@{-}[r]\ar@/^2mm/[r]&
\bullet\ar@{-}[r]&\bullet\ar@{-}[r]&\dots\ar@{-}[r]&\bullet\ar@{-}@(ur,dr)[]
}\qquad\qquad\qquad
$
}
\resizebox{6cm}{!}{
$
\xymatrix@R=4mm{
\widetilde{DL}_n:&&\bullet\ar@{-}[r]&
\bullet\ar@{-}[r]\ar@{-}[d]&\bullet\ar@{-}[r]&\dots\ar@{-}[r]&
\bullet\ar@{-}[r]&\bullet\ar@{-}@(ur,dr)\\
&&&\bullet&&&&&
}\qquad\qquad\qquad
$
}

The following proposition characterizes the above graphs,
see \cite[Theorem~2]{HPR}:

\begin{proposition}\label{prop-s3.3-1}
A connected GCM on a finite index set which annihilates 
some  vector with positive integer coefficients is the
GCM of one of the affine Dynkin diagrams. 
\end{proposition}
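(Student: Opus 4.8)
The plan is to translate the hypothesis into a Perron--Frobenius statement and then run a minimality argument that reduces everything to the finite-type classification of Proposition~\ref{prop-s3.2-1}. Write $Q=\widetilde{C}=2\mathrm{Id}-C$ for the non-negative integer adjacency matrix of $\Gamma_C$. The assumption that $C$ annihilates a vector $\delta$ with positive integer entries then reads $Q\delta=2\delta$. Since $C$ is connected, $Q$ is irreducible, and a non-negative irreducible matrix that possesses a strictly positive eigenvector must have that eigenvalue equal to its spectral radius $\rho(Q)$, which is moreover a simple eigenvalue. Hence $\rho(Q)=2$, the null space of $C$ is one-dimensional, and $\delta$ is, up to scaling, the unique positive null vector.

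The second step is a minimality observation. Deleting any single vertex $i$ and passing to a connected component of what remains yields an irreducible adjacency matrix $Q'$ that is a proper principal submatrix of $Q$; by the strict monotonicity of the Perron--Frobenius eigenvalue under passage to a proper principal submatrix of an irreducible non-negative matrix, $\rho(Q')<\rho(Q)=2$. For the (symmetrizable) matrices at hand this spectral bound is equivalent to positive definiteness of $2\mathrm{Id}-Q'$, so Proposition~\ref{prop-s3.2-1} — together with its evident extension permitting loops, where already a single looped vertex gives the positive-definite $1\times 1$ GCM $(1)$ — identifies each such component as a finite-type diagram. Consequently $\Gamma_C$ is connected and \emph{not} of finite type (its form is only positive semidefinite, of corank one), while every proper connected subdiagram of it is of finite type.

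It remains to classify the minimal non-finite-type connected graphs, and here I would argue combinatorially from the finite-type list. The finite-type restriction on every one-vertex deletion bounds all the local data of $\Gamma_C$: the valency of each vertex, the multiplicity of each edge, the number of branch vertices, and the number and placement of loops are each constrained, since any forbidden local configuration would embed a non-finite-type (affine or worse) proper subdiagram. Enumerating the admissible shapes under these constraints yields precisely the diagrams displayed in Subsection~\ref{s3.3}, namely the affine Dynkin diagrams; this recovers \cite[Theorem~2]{HPR} (see also \cite[Chapter~4]{Kac}).

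The main obstacle is this last, purely combinatorial step: organizing the case analysis so that it is simultaneously exhaustive and finite, in particular treating uniformly the multiply-laced series $\widetilde{B}_n,\widetilde{C}_n,\widetilde{BC}_n,\widetilde{BD}_n,\widetilde{CD}_n$ and the looped series $\widetilde{L}_n,\widetilde{BL}_n,\widetilde{CL}_n,\widetilde{DL}_n$, which fall outside the classical loop-free picture. A secondary technical point, to be handled carefully in the second step, is the non-symmetry of $C$: for the asymmetrically-valued diagrams one must either verify symmetrizability before invoking positive definiteness, or else argue throughout with the Perron--Frobenius spectral radius of $Q'$ directly, so that the reduction to Proposition~\ref{prop-s3.2-1} remains legitimate.
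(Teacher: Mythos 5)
The paper itself offers no proof of this proposition: it is quoted directly from \cite[Theorem~2]{HPR}, so there is no internal argument to compare against. Your reduction is the standard one (essentially that of \cite[Chapter~4]{Kac}): Perron--Frobenius applied to the irreducible non-negative matrix $Q=2\mathrm{Id}-C$ forces $\rho(Q)=2$ with one-dimensional kernel, and strict monotonicity of the spectral radius under proper principal submatrices shows that every proper connected subdiagram is of ``finite type''. These two steps are sound.

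Two genuine gaps remain, however. The smaller one concerns your appeal to Proposition~\ref{prop-s3.2-1}: that statement is formulated only for matrices with zero diagonal, whereas deleting a vertex from a diagram such as $\widetilde{L}_n$, $\widetilde{BL}_n$ or $\widetilde{CL}_n$ leaves loops. The ``evident extension permitting loops'' is not a one-line remark, because positive definite looped diagrams do exist (a single vertex with one loop has GCM $(1)$, and more generally the loop-ended chains of \cite{HPR} are of finite type) and none of them is a classical Dynkin diagram; so the finite-type classification must itself be extended to the looped setting before your minimality argument can close. Likewise, the equivalence of $\rho(Q')<2$ with positive definiteness of $2\mathrm{Id}-Q'$ for non-symmetric $Q'$ requires the symmetrizability discussion you defer rather than resolve. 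The larger gap is that the entire content of the theorem is the exhaustive enumeration of the minimal non-finite connected diagrams, and you explicitly do not carry this out, naming it instead as ``the main obstacle''. As written, the proposal is a correct reduction of the theorem to the classification, not a proof of the classification: to finish, you would either have to execute the vertex-by-vertex case analysis bounding valencies, edge multiplicities, branch points and loops (as is done in \cite{HPR,HPR2}), or simply cite \cite[Theorem~2]{HPR}, which is what the paper does.
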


\subsection{Infinite Dynkin diagrams and their characterization}\label{s3.4}

Finally, recall the following infinite Dynkin diagrams:

\resizebox{6cm}{!}{
$
\xymatrix@R=4mm{
A_\infty:&&\bullet\ar@{-}[r]&\bullet\ar@{-}[r]&
\bullet\ar@{-}[r]&\dots
}\qquad\qquad\qquad
$
}
\resizebox{6cm}{!}{
$
\xymatrix@R=4mm{
A_\infty^\infty:&&\dots\ar@{-}[r]&\bullet\ar@{-}[r]&\bullet\ar@{-}[r]&
\bullet\ar@{-}[r]&\dots
}\qquad\qquad\qquad
$
}

\resizebox{6cm}{!}{
$
\xymatrix@R=4mm{
B_\infty:&&\bullet\ar@/_2mm/@{-}[r]&\bullet\ar@{-}[r]\ar@/_2mm/[l]&
\bullet\ar@{-}[r]&\dots
}\qquad\qquad\qquad
$
}
\resizebox{6cm}{!}{
$
\xymatrix@R=4mm{
C_\infty:&&\bullet\ar@/_2mm/@{-}[r]\ar@/^2mm/[r]&\bullet\ar@{-}[r]&
\bullet\ar@{-}[r]&\dots
}\qquad\qquad\qquad
$
}

\resizebox{6cm}{!}{
$
\xymatrix@R=4mm{
D_\infty:&&\bullet\ar@{-}[r]&\bullet\ar@{-}[r]&
\bullet\ar@{-}[r]&\dots\\
&&&\bullet\ar@{-}[u]&
}\qquad\qquad\qquad
$
}
\resizebox{6cm}{!}{
$
\xymatrix@R=4mm{
T_\infty:&&\bullet\ar@{-}@(ul,dl)[]\ar@{-}[r]&\bullet\ar@{-}[r]&
\bullet\ar@{-}[r]&\dots
}\qquad\qquad\qquad
$
}

The following proposition characterizes the above graphs,
see \cite[Page~12]{HPR}:

\begin{proposition}\label{prop-s3.4-1}
A connected GCM on a countable index set which annihilates 
some  vector with positive integer coefficients is the
GCM of one of the infinite Dynkin diagrams.
\end{proposition}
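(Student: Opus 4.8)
The plan is to reduce this infinite statement to the finite characterizations already established in Propositions~\ref{prop-s3.2-1} and \ref{prop-s3.3-1}, by restricting the annihilated vector to finite connected subdiagrams, and then to run a Vinberg-type combinatorial analysis on the resulting graph. Write $C=(c_{i,j})_{i,j\in I}$ and let $v=(v_i)_{i\in I}$ with $v_i\in\mathbb{Z}_{>0}$ satisfy $Cv=0$. First I would record two finiteness facts. Since $(Cv)_i=c_{i,i}v_i+\sum_{j\neq i}c_{i,j}v_j=0$, while each adjacent off-diagonal term is $\le-1$ and each $v_j\ge1$, a vertex with infinitely many neighbours would force this sum to diverge; hence $\Gamma_C$ is locally finite. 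Next, for any finite $J\subseteq I$ inducing a connected subgraph, set $v_J=(v_i)_{i\in J}$ and $C_J=(c_{i,j})_{i,j\in J}$; restricting $Cv=0$ gives $(C_Jv_J)_i=-\sum_{j\notin J}c_{i,j}v_j\ge0$ for all $i\in J$, and since $\Gamma_C$ is connected and infinite but $J$ is finite, some $i_0\in J$ is adjacent to a vertex outside $J$, so that $(C_Jv_J)_{i_0}>0$ strictly.

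Second, I would show each $C_J$ is of finite type, i.e. a classical Dynkin diagram. Having $v_J>0$ with $C_Jv_J\ge0$ excludes indefinite type in the standard trichotomy for connected generalized Cartan matrices \cite[Ch.~4]{Kac}, so $C_J$ is of finite or affine type. It cannot be affine: an affine $C_J$ is symmetrizable and positive semidefinite, with a positive vector $\delta$ spanning its one-dimensional kernel, so pairing through the symmetrizing diagonal $D=\mathrm{diag}(d_i)$ (with $d_i>0$ and $DC_J$ symmetric) yields $\sum_i d_i\delta_i(C_Jv_J)_i=\delta^{\top}DC_Jv_J=v_J^{\top}DC_J\delta=0$, which contradicts $(C_Jv_J)_{i_0}>0$ since every $d_i\delta_i>0$. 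Hence $C_J$ is positive definite and thus a classical Dynkin diagram by Proposition~\ref{prop-s3.2-1}; equivalently, $v$ restricts to a positive subadditive function that is nowhere-fully-additive on each proper finite piece, which is exactly Vinberg's criterion \cite{HPR}. In particular, $\Gamma_C$ contains no affine Dynkin diagram as a finite connected full subgraph.

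Finally I would classify $\Gamma_C$ from the properties \emph{locally finite, connected, infinite, containing no affine subdiagram}. The absence of $\widetilde{A}_n$ forces the underlying simple graph to be a tree; positive definiteness of two-vertex subgraphs bounds each edge strength by $c_{i,j}c_{j,i}\le3$, and a triple edge is impossible because $G_2$ together with any further vertex is $\widetilde{G}_{21}$ or $\widetilde{G}_{22}$; the absence of $\widetilde{D}_4$ bounds all vertex degrees by $3$. Truncating the infinite arms then pins down each potential defect: a branch vertex must carry arms of lengths $(1,1,\infty)$, else one meets $\widetilde{E}_6,\widetilde{E}_7$ or $\widetilde{E}_8$; a double edge must sit at a finite end with a single infinite arm, else one meets $\widetilde{F}_{41},\widetilde{F}_{42},\widetilde{B}_n,\widetilde{C}_n$ or $\widetilde{BC}_n$; and a loop must likewise sit at a finite end, else one meets $\widetilde{L}_n$. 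Moreover no two defects can coexist, since any pair among branch/double-edge/loop produces one of $\widetilde{D}_n,\widetilde{BD}_n,\widetilde{CD}_n,\widetilde{BL}_n,\widetilde{CL}_n,\widetilde{DL}_n$. Enumerating the survivors gives precisely a simple ray or line ($A_\infty$, $A_\infty^\infty$), one terminal double edge in its two orientations ($B_\infty$, $C_\infty$), one terminal fork ($D_\infty$), and one terminal loop ($T_\infty$).

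I expect the main obstacle to be the exhaustive bookkeeping in this last step: one must verify that every excluded local configuration genuinely truncates to one of the affine diagrams of Subsection~\ref{s3.3} (the mixed cases $\widetilde{BC}_n,\widetilde{BD}_n,\widetilde{CD}_n,\widetilde{BL}_n,\widetilde{CL}_n,\widetilde{DL}_n$ being the most delicate), and that the surviving trees are exactly the six asserted diagrams. The reduction to finite subgraphs is what keeps this elementary, since it converts every infinite question into the finite characterizations already in hand.
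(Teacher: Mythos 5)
The paper does not actually prove this proposition: it is quoted verbatim from \cite[Page~12]{HPR}, so your proposal is in effect a reconstruction of the Happel--Preiser--Ringel/Vinberg argument. Your overall strategy is the right one: local finiteness of $\Gamma_C$, restriction of $Cv=0$ to a finite connected piece $J$ to get $C_Jv_J\ge 0$ with strict inequality at any vertex adjacent to the complement, exclusion of affine subdiagrams by pairing against the positive kernel vector through the symmetrizer, and then a case analysis on trees with bounded edge multiplicities and degrees. Those steps are sound as far as they go.

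The step that fails as written is the conclusion that each $C_J$ ``is positive definite and thus a classical Dynkin diagram by Proposition~\ref{prop-s3.2-1}''. That proposition applies only to matrices $Q$ with \emph{zero diagonal}, i.e.\ to loop-free diagrams, whereas the GCMs of this paper allow $c_{i,i}\le 2$ and the target list contains $T_\infty$, which carries a loop. If every finite connected full subdiagram really were a classical Dynkin diagram, no vertex could carry a loop, and your argument would prove that $T_\infty$ cannot occur --- contradicting the very statement being proved (and the explicit $T_\infty$ realizations of Section~\ref{s9}). The correct intermediate claim is that each $C_J$ is one of the \emph{generalized} Dynkin diagrams of \cite{HPR}: besides the classical list, the positive definite connected generalized Cartan matrices include the loop-ended paths $L_n$ (for instance $\left(\begin{smallmatrix}1&-1\\-1&2\end{smallmatrix}\right)$ is positive definite but not classical). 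For the same reason, the appeal to the trichotomy of \cite[Chapter~4]{Kac} is not literally licensed for subdiagrams containing a vertex with $c_{i,i}<2$; one needs the subadditive-function version for generalized Cartan matrices established in \cite{HPR,HPR2}. Once the finite-type list is enlarged to include $L_n$, your concluding combinatorics (which already anticipates terminal loops) does go through and yields exactly the six diagrams.
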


We refer to \cite{HPR,HPR2} for more details.

\subsection{Combinatorics of transitive $\mathscr{C}$-categories}\label{s3.5}

The following corollary follows from the definitions and
the above characterizations of the affine and 
infinite Dynkin diagrams, see \cite[Theorem~4.1]{EK}.

\begin{corollary}\label{cor-s3.5-1}
Let $\mathcal{M}$ be a transitive $\mathscr{C}$-module
category for which $[L(1)]$ annihilates some vector with
positive integer coefficients. Then the action of $[L(1)]$ on the 
$[\mathscr{C}]$-module $[\mathcal{M}]_\oplus$ is 
given by the matrix $\widetilde{C}$ associated to
either an affine Dynkin diagram or an infinite Dynkin diagram.
\end{corollary}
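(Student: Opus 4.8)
The plan is to distil from $\mathcal{M}$ a generalized Cartan matrix and then quote Proposition~\ref{prop-s3.3-1} and Proposition~\ref{prop-s3.4-1}. Fix a complete and irredundant set $\{X_i:i\in I\}$ of indecomposable objects and let $Q=(m_{i,j})_{i,j\in I}$ be the matrix of the action of $[L(1)]=[F_1]$ on $[\mathcal{M}]_\oplus$, where $m_{i,j}$ is the multiplicity of $X_i$ as a summand of $F_1(X_j)$. By construction $Q$ has non-negative integer entries and $I$ is finite or countably infinite. Putting $C:=2\,\mathrm{Id}_I-Q$, the first task is to check that $C$ is a GCM. The off-diagonal entries $c_{i,j}=-m_{i,j}$ lie in $\mathbb{Z}_{\le 0}$ and the diagonal entries satisfy $c_{i,i}=2-m_{i,i}\le 2$, so the only axiom requiring work is the sign-compatibility $c_{i,j}=0\Leftrightarrow c_{j,i}=0$.

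Establishing this compatibility, namely $m_{i,j}\ne 0\Leftrightarrow m_{j,i}\ne 0$, is the crux of the argument, and this is where I would use that $L(1)$ is self-dual in the rigid symmetric category $\mathscr{C}$. Self-duality makes $F_1=L(1)\otimes-$ biadjoint to itself, so on the abelianization $\overline{\mathcal{M}}$ one has natural isomorphisms $\Hom(F_1 X,Y)\cong\Hom(X,F_1 Y)$ for all $X,Y$ (and symmetrically with the roles of $X$ and $Y$ interchanged). Through these adjunctions, $m_{i,j}\ne 0$ translates, via the identification of summand-multiplicities of the projective $X_j$ with composition-factor multiplicities of the corresponding simple of $\overline{\mathcal{M}}$, into the occurrence of one simple among the composition factors of the $F_1$-image of the other; the two adjunctions then match these occurrences in the two directions (up to the top--socle duality), forcing the support of $Q$ to be symmetric. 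I expect this reciprocity to be the main obstacle, precisely because the \emph{values} $m_{i,j}$ and $m_{j,i}$ need not agree: the non-symmetric diagrams (such as $B_\infty$ and $C_\infty$, and the affine $\widetilde{B}_n,\widetilde{C}_n,\dots$) must remain admissible outputs, so one may only hope to prove equality of supports, not of entries.

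With $C$ known to be a GCM, the remaining steps are formal. Transitivity of $\mathcal{M}$ means, by the criterion recorded in Subsection~\ref{s2.3}, that for all $i,j$ some power of $Q$ has a nonzero $(i,j)$-entry; thus $Q$ is irreducible, and together with the sign-compatibility this makes the graph $\Gamma_C$ connected. By hypothesis there is a vector $v$ with positive integer coefficients annihilated by $C=2\,\mathrm{Id}_I-Q$ (equivalently, $v$ is a positive eigenvector of $Q$ for the Frobenius--Perron eigenvalue $2$), so $C$ is singular and hence, by Proposition~\ref{prop-s3.2-1}, not positive definite and not of finite type. Applying Proposition~\ref{prop-s3.3-1} when $I$ is finite and Proposition~\ref{prop-s3.4-1} when $I$ is countably infinite identifies $C$ with the GCM of an affine, respectively infinite, Dynkin diagram. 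Then $Q=\widetilde{C}$ is the associated adjacency matrix, which is the assertion of the corollary.
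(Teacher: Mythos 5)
Your overall route coincides with the paper's: the paper obtains this corollary by reducing to Proposition~\ref{prop-s3.3-1} and Proposition~\ref{prop-s3.4-1} (with a pointer to [EK, Theorem~4.1]), and you correctly identify that the only non-formal step in this reduction is the sign-compatibility axiom $c_{i,j}=0\Leftrightarrow c_{j,i}=0$ for $C=2\,\mathrm{Id}_I-Q$. However, your argument for that step does not close. Working in $\overline{\mathcal{M}}$ (which already presupposes the admissibility hypothesis that the corollary does not impose), one has $m_{i,j}=\dim\Hom(F_1P_j,S_i)=\dim\Hom(P_j,F_1S_i)=[F_1S_i:S_j]$, so $m_{i,j}\neq 0$ means that $S_j$ occurs as an \emph{arbitrary} composition factor of $F_1S_i$. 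The two adjunctions give only $\Hom(F_1S_i,S_j)\cong\Hom(S_i,F_1S_j)$ and $\Hom(S_j,F_1S_i)\cong\Hom(F_1S_j,S_i)$; that is, they detect occurrences of $S_j$ in the top or in the socle of $F_1S_i$ and match those with occurrences of $S_i$ in the socle or top of $F_1S_j$. A composition factor of $F_1S_i$ lying strictly between top and socle is invisible to both adjunctions, so ``matching the occurrences in the two directions up to top--socle duality'' does not establish that the support of $Q$ is symmetric.

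The gap is not cosmetic, because support-symmetry cannot be recovered from the numerical data you otherwise use (non-negativity of all $R_n(Q)$, irreducibility, and the positive eigenvector). For instance,
\begin{displaymath}
Q=\left(\begin{array}{ccc}0&1&0\\1&1&1\\1&0&1\end{array}\right)
\end{displaymath}
is irreducible, satisfies $Qv=2v$ for $v=(1,2,1)^{T}$, and satisfies $Q^3=2Q^2$, from which one checks directly that every $R_n(Q)$ has non-negative integer entries; yet $m_{3,1}\neq 0$ while $m_{1,3}=0$, so $2\,\mathrm{Id}-Q$ is not a GCM. Some genuinely categorical input beyond the self-adjointness of $F_1$ at the level of the split Grothendieck group is therefore required to exclude such matrices, and your sketch does not supply it. The remaining steps of your proposal (irreducibility of $Q$ from transitivity, feeding the annihilated positive vector into Proposition~\ref{prop-s3.3-1} or Proposition~\ref{prop-s3.4-1} according to whether $I$ is finite or countable) are correct, with the appeal to Proposition~\ref{prop-s3.2-1} being unnecessary.
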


This raises a natural problem to find a realization for each
possibility described in Corollary~\ref{cor-s3.5-1} within
the context provided by Subsection~\ref{s2.5}. This is the
main problem which we consider in this paper.
For the case of a finite rank, we have a (combinatorially
quite complex) classification
of simple $\mathscr{C}$-module categories obtained in
\cite{EO}. It is not clear where to look for such 
categories in our context. 
Therefore we focus on the infinite rank case.

\section{Realizations of type  $A_\infty$}\label{s4}

\subsection{First realization}\label{s4.1}

Let $\mathfrak{g}=\mathfrak{sl}_2$ with
$\varphi$ being the identity. Define
$\mathcal{M}_1:=\mathscr{C}$ with the left regular action.

Then the indecomposable objects in  $\mathcal{M}_1$
are $L(i)$, for $i\geq 0$, and the action of $L(1)$ is given by
\begin{displaymath}
L(1)\otimes_{\mathbb{C}} L(i)\cong
\begin{cases}
L(1),& i=0;\\
L(i-1)\oplus L(i+1),& i>0.
\end{cases}
\end{displaymath}
Therefore, we have
\begin{equation}\label{eq-s4.2-2}
[L(1)]=
\left(
\begin{array}{ccccc}
0&1&0&0&\dots\\
1&0&1&0&\dots\\
0&1&0&1&\dots\\
0&0&1&0&\dots\\
\vdots&\vdots&\vdots&\vdots&\ddots
\end{array}
\right)
\end{equation}

\subsection{Second realization}\label{s4.2}

Let $\mathfrak{g}=\mathfrak{sl}_2$ with
$\varphi$ being the identity. Consider the BGG category
$\mathcal{O}$ for $\mathfrak{g}$, see \cite{BGG,Maz1}.
For $\lambda\in\mathbb{C}$, denote by $L(\lambda)$
the simple highest weight $\mathfrak{g}$-module 
with highest weight $\lambda$. Also denote by
$P(\lambda)$ the projective cover of $L(\lambda)$
in $\mathcal{O}$.

Define the category $\mathcal{N}$ as the additive closure in
$\mathcal{O}$ of the modules $L(\lambda)$
and $P(\lambda)$, for $\lambda\in\{-1,-2,-3,\dots\}$.
Note that these are exactly the integral tilting
modules in the sense of \cite{CI,Rin}. Also, note
that $L(-1)=P(-1)$ while $L(\lambda)\not\simeq P(\lambda)$,
for $\lambda\in\{-2,-3,\dots\}$.

For $\lambda\in \{-1,-2,-3,\dots\}$, we have,
see \cite[Section~5]{Maz1}:
\begin{displaymath}
L(1)\otimes_{\mathbb{C}} L(\lambda)\cong
\begin{cases}
P(-2),& \lambda=-1;\\
L(\lambda-1)\oplus L(\lambda+1),& \lambda<-1;
\end{cases}
\end{displaymath}
and
\begin{equation}\label{eq-s4.2-3}
L(1)\otimes_{\mathbb{C}} P(\lambda)\cong
\begin{cases}
P(-2),& \lambda=-1;\\
P(-1)\oplus P(-1)\oplus P(-3),& \lambda=-2;\\
P(\lambda-1)\oplus P(\lambda+1),& \lambda<-2.
\end{cases}
\end{equation}

From these formulae it follows
that the additive closure of the modules $P(\lambda)$,
with $\lambda\in\{-1,-2,\dots\}$, is a transitive 
$\mathscr{C}$-module subcategory of $\mathcal{N}$.
Let $\mathcal{I}$ be the ideal in $\mathcal{N}$
generated by all $P(\lambda)$, where 
$\lambda\in\{-1,-2,\dots\}$. Then 
the formulae above imply that $\mathcal{I}$
is $\mathscr{C}$-stable and hence
$\mathcal{M}_2:=\mathcal{N}/\mathcal{I}$
is a $\mathscr{C}$-module category.
Now the above formulae imply Formula~\eqref{eq-s4.2-2}
for $\mathcal{M}_2$.

\subsection{Statements}\label{s4.3}

We can now summarize the content of this section
in the following proposition:

\begin{proposition}\label{prop-s4.2-1}
Both  $\mathcal{M}_1$ and $\mathcal{M}_2$ are simple
transitive $\mathscr{C}$-module categories of type $A_\infty$.
\end{proposition}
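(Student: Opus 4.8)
The plan is to check, for each of $\mathcal{M}_1$ and $\mathcal{M}_2$ separately, the three substantive claims folded into the statement: that the category is of type $A_\infty$, that it is transitive, and that it is simple transitive. The locally finitary axioms from Subsection~\ref{s2.3} hold for $\mathcal{M}_1=\mathscr{C}$ by semi-simplicity of $\mathscr{C}$, and are inherited by the quotient $\mathcal{M}_2=\mathcal{N}/\mathcal{I}$ from category $\mathcal{O}$.

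The assertion of type $A_\infty$ is to be read off the action matrix. For $\mathcal{M}_1$ the Clebsch--Gordan rule gives directly the tridiagonal matrix in \eqref{eq-s4.2-2}, which is the adjacency matrix $\widetilde{C}$ of the $A_\infty$ diagram from Subsection~\ref{s3.4}. For $\mathcal{M}_2$ I would first pin down the surviving indecomposables: since $L(-1)=P(-1)$ lies in $\mathcal{I}$, while each $L(\lambda)$ with $\lambda\leq -2$ is not a summand of any $P(\mu)$ and hence is not killed, the quotient leaves exactly the chain $L(-2), L(-3), \dots$; moreover in the decomposition $L(1)\otimes L(-2)\cong L(-3)\oplus L(-1)$ the summand $L(-1)$ becomes a zero object. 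Re-indexing $X_k:=L(-2-k)$ then reproduces \eqref{eq-s4.2-2} verbatim.

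For transitivity I would invoke the criterion recorded in Subsection~\ref{s2.3}: transitivity is equivalent to strong connectedness of the action graph $\Lambda_\mathcal{M}$. In both cases $[L(1)]$ is the $A_\infty$ adjacency matrix, so $\Lambda_\mathcal{M}$ is the connected $A_\infty$ graph; since this matrix is symmetric, connectedness immediately upgrades to strong connectedness, settling both categories at once.

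Simple transitivity is the only claim needing a real argument, and the crucial point is that both underlying categories are semi-simple with $\mathrm{Hom}(X_i, X_j)=\delta_{i,j}\mathbb{C}$ --- Schur's lemma for $\mathcal{M}_1$, and for $\mathcal{M}_2$ the observation that passing to the quotient neither enlarges the one-dimensional endomorphism ring of a surviving simple nor introduces morphisms between non-isomorphic simples. Granting this, suppose $\mathcal{J}$ is a nonzero $\mathscr{C}$-invariant ideal; since every nonzero morphism is a scalar multiple of some identity, $\mathcal{J}$ contains some $\mathrm{id}_{X_i}$. $\mathscr{C}$-invariance yields $\mathrm{id}_{F(X_i)}\in\mathcal{J}$ for all $F\in\mathscr{C}$, and composing with the structural idempotents (available since the category is idempotent split and Krull-Schmidt) shows $\mathrm{id}_{X_j}\in\mathcal{J}$ whenever $X_j$ is a summand of some $F(X_i)$. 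Transitivity makes these $X_j$ range over all indecomposables, so $\mathcal{J}$ contains every identity and hence is the whole category. The main obstacle is the bookkeeping for $\mathcal{M}_2$: verifying cleanly that $\mathcal{N}/\mathcal{I}$ collapses to a semi-simple category on the shifted chain with one-dimensional Hom-spaces; once this is in place, the ideal argument is uniform and purely formal.
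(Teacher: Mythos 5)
Your proof is correct and takes essentially the same route as the paper: the type is read off from the action matrix \eqref{eq-s4.2-2}, and simple transitivity is deduced by combining transitivity with semi-simplicity of the underlying category (obvious for $\mathcal{M}_1$, Schur's lemma for $\mathcal{M}_2$). Your extra bookkeeping identifying the surviving indecomposables of $\mathcal{N}/\mathcal{I}$ as $L(-2),L(-3),\dots$ with one-dimensional Hom-spaces simply makes explicit what the paper compresses into the remark that $\mathcal{M}_2$ is essentially the additive closure of simple $\mathfrak{g}$-modules.
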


\begin{proof}
That  $\mathcal{M}_1$ and $\mathcal{M}_2$ are of 
type $A_\infty$, follows from Formula~\eqref{eq-s4.2-2}.
That both $\mathcal{M}_1$ and $\mathcal{M}_2$ are
simple transitive follows by combining that they are
transitive and semi-simple. The latter is obvious for
$\mathcal{M}_1$ and follows from Schur's lemma 
for $\mathcal{M}_2$, since $\mathcal{M}_2$ is, essentially, the 
additive closure of simple $\mathfrak{g}$-modules.
\end{proof}

We also record the following observation.

\begin{proposition}\label{prop-s4.2-2}
All admissible simple transitive $\mathscr{C}$-module categories 
of type $A_\infty$ are equivalent (as $\mathscr{C}$-module categories).
\end{proposition}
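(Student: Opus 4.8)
The plan is to show that any such $\mathcal{M}$ is equivalent, as a $\mathscr{C}$-module category, to $\mathcal{M}_1=\mathscr{C}$; since equivalence is transitive, this yields the statement. Write $\{X_i:i\in\mathbb{Z}_{\geq 0}\}$ for the indecomposable objects, labelled so that the action of $[L(1)]$ on $[\mathcal{M}]_\oplus$ is the matrix \eqref{eq-s4.2-2}; thus $L(1)\cdot X_0\cong X_1$ and $L(1)\cdot X_i\cong X_{i-1}\oplus X_{i+1}$ for $i\geq 1$, the isomorphisms of objects being forced by the matrix entries since the split Grothendieck group has the classes $[X_j]$ as a basis. Here $X_0$ is singled out as the unique \emph{end} vertex of the $A_\infty$ diagram.

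First I would identify the objects with translates of the generator $X_0$. Using $[F_i]=R_i([L(1)])$ and the recursion $R_i=xR_{i-1}-R_{i-2}$, an easy induction against the tridiagonal matrix \eqref{eq-s4.2-2} gives $R_i([L(1)])\,[X_0]=[X_i]$, whence $F_i\cdot X_0\cong X_i$ for all $i$ (a single basis vector is the class of an indecomposable). In particular the evaluation functor $\Phi:=(-)\cdot X_0\colon\mathscr{C}\to\mathcal{M}$ is a $\mathscr{C}$-module functor sending $F_i$ to $X_i$, hence is essentially surjective; moreover it sends identities to identities, so it is faithful because $\mathscr{C}(F_i,F_j)=\delta_{ij}\mathbb{C}$ by Schur's lemma.

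The heart of the argument is to prove that $\mathcal{M}$ is semi-simple. I would consider the categorical radical $\mathrm{rad}\,\mathcal{M}$, that is, the ideal of all non-isomorphisms between indecomposables together with the radicals of the endomorphism rings. This is a proper ideal, since it contains no identity morphism of a nonzero object. The key point is that it is $\mathscr{C}$-stable: as every $F_i$ is self-dual, the functor $F_i\cdot(-)$ is biadjoint to itself, and such biadjoint additive endofunctors preserve the radical of a Krull--Schmidt category (one transports a hypothetical isomorphic summand of $F_i\cdot f$ through the unit and counit of the self-adjunction to exhibit an isomorphic summand of $f$). Granting this, simple transitivity forces $\mathrm{rad}\,\mathcal{M}$ to be either zero or all of $\mathcal{M}$; being proper, it must vanish. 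Hence every nonzero morphism between indecomposables is an isomorphism and $\mathrm{End}_\mathcal{M}(X_i)=\mathbb{C}$, so $\mathcal{M}$ is semi-simple with pairwise non-isomorphic simple objects $X_i$; in particular $\mathcal{M}(X_i,X_j)=\delta_{ij}\mathbb{C}$.

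Finally, comparing with $\mathscr{C}(F_i,F_j)=\delta_{ij}\mathbb{C}$, the faithful functor $\Phi$ induces isomorphisms on all hom-spaces (identity to identity in the diagonal case, $0=0$ otherwise), so $\Phi$ is full and hence an equivalence of $\mathscr{C}$-module categories. I expect the only genuine obstacle to be the $\mathscr{C}$-stability of $\mathrm{rad}\,\mathcal{M}$; everything else is bookkeeping with the Clebsch--Gordan rule and the recursion for $R_i$. If one prefers to avoid the biadjointness lemma, the same vanishing $\mathcal{M}(X_i,X_j)=\delta_{ij}\mathbb{C}$ can instead be extracted by using self-duality to rewrite $\mathcal{M}(F_i\cdot X_0,F_j\cdot X_0)\cong\mathcal{M}(X_0,F_i\otimes F_j\cdot X_0)$ and decomposing $F_i\otimes F_j$ by Clebsch--Gordan, reducing everything to the numbers $\dim\mathcal{M}(X_0,X_k)$, whose determination then becomes the crux.
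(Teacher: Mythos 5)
There is a genuine gap, and it sits exactly where you predicted: the claim that a self\--biadjoint additive endofunctor of a Krull--Schmidt category preserves the radical is false, and the ``transport through unit and counit'' sketch does not work (the composite one obtains involves $\eta_X\circ\epsilon_X$, which is not controlled by any triangle identity). A concrete counterexample: let $A=\mathbb{C}[x]/(x^2)$ and $F=A\otimes_{\mathbb{C}}{}-$ on $A$\--mod; since $A$ is symmetric, $F$ is biadjoint to itself, yet the radical morphism $\iota\colon \mathbb{C}\hookrightarrow A$ (socle inclusion) is sent to a split monomorphism $A\to A\oplus A$, one of whose components is the identity. This is the same phenomenon the paper highlights in Subsection~\ref{s1.3}/\ref{s5.87}: for finitary rigid monoidal categories, applying biadjoint functors to simple objects \emph{does} reach projectives (\cite[Lemma~12]{MM5}, \cite[Theorem~2]{KMMZ}), which is precisely a failure of radical preservation. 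In your situation the danger is concrete: for a radical morphism $\varphi\colon M_i\to M_{i+2}$, the objects $F_1(M_i)\cong M_{i-1}\oplus M_{i+1}$ and $F_1(M_{i+2})\cong M_{i+1}\oplus M_{i+3}$ share the summand $M_{i+1}$, and nothing formal prevents the corresponding component of $F_1(\varphi)$ from being invertible. The paper's proof spends essentially all of its effort on exactly this case: it passes to the abelianization, computes $F_i(N_j)$ on simples (Lemma~\ref{lem-s4.2-3}, via \cite[Lemma~8]{AM}), deduces $F_1(\mathrm{Rad}(M_{i+2}))=\mathrm{Rad}(F_1(M_{i+2}))$ from the fact that $F_1(N_{i+2})$ is semisimple and equals the top of $F_1(M_{i+2})$, and only then concludes that $F_1(\varphi)$ is radical. (The easy cases $|i-j|\neq 0,2$ and the nilpotent endomorphism case are handled as you would expect.)

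Your fallback suggestion does not close the gap either: adjunction and Clebsch--Gordan reduce everything to the numbers $d_k=\dim\mathcal{M}(X_0,X_k)$, but determining that $d_0=1$ and $d_k=0$ for $k>0$ is equivalent to the semisimplicity you are trying to prove, and it does not follow from the combinatorics alone --- simple transitivity must be used through an argument like the radical\--stability one above. The remaining parts of your proposal (identification $F_i\cdot X_0\cong X_i$ via the recursion for $R_i$, and the Yoneda/evaluation functor being an equivalence once semisimplicity is known) are correct and agree with the paper.
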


\begin{proof}
Let $\mathcal{M}$ be a simple transitive 
$\mathscr{C}$-module category of type $A_\infty$.
We are going to prove that $\mathcal{M}$ is equivalent,
as a $\mathscr{C}$-module category, to the left regular
$\mathscr{C}$-module category ${}_{\mathscr{C}}\mathscr{C}$.

We have that  $[L(1)]$ is given by Formula~\eqref{eq-s4.2-2}
and hence there is a unique, up to isomorphism 
indecomposable object $X$ in $\mathcal{M}$
such that $L(1)\cdot X$ is indecomposable. 

Since $L(i)\otimes_{\mathscr{C}} L(0)\cong L(i)$
in ${}_{\mathscr{C}}\mathscr{C}$, the first column of  
each $R_i([L(1)])$ has only one non-zero entry,
namely $1$ in row $i+1$. This means that 
$L(i)\cdot X$ is indecomposable, that 
$L(i)\cdot X\cong L(j)\cdot X$ if and only if $i=j$
and that all indecomposable objects of $\mathcal{M}$
can be obtained in this way.

Now let us show that the radical of $\mathcal{M}$ is
$\mathscr{C}$-invariant. Since $\mathscr{C}$ is generated
by $F_1$, it is enough to show that the radical of  $\mathcal{M}$ is
$F_1$-invariant. Let $M_0,M_1,\dots$ be a list of 
indecomposables in $\mathcal{M}$ such that in the 
corresponding basis of the split Grothendieck group
the action of $F_1$ is given by Formula~\eqref{eq-s4.2-2}.

If $\varphi: M_i\to M_j$ is a morphism and
$|i-j|\neq 0,2$, then $F_1(M_i)$ and $F_1(M_j)$ do not have
isomorphic summands and hence $F_1(\varphi)$
is a radical morphism. If $\varphi: M_i\to M_i$ 
is a radical morphism, it is nilpotent as
$\mathcal{M}$ is locally finitary. Therefore
$F_1(\varphi)$ is also nilpotent. As
$F_1(M_i)$ does not have isomorphic summands,
it follows that $F_1(\varphi)$ is a radical morphism.

It remains to consider the case of a morphism 
$\varphi: M_i\to M_{i\pm 2}$. Let us assume that
$\varphi: M_i\to M_{i+2}$, in the other case the
arguments are similar. Consider the abelianization
$\overline{\mathcal{M}}$ and let $N_i$
be the simple top of $M_i$ in $\overline{\mathcal{M}}$,
for $i\in\mathbb{Z}_{\geq 0}$.

\begin{lemma}\label{lem-s4.2-3}
For $i,j\in \mathbb{Z}_{\geq 0}$, we have
\begin{displaymath}
F_i(N_j)\cong N_{i+j}\oplus  N_{i+j-2}\oplus\dots
\oplus N_{|i-j|}.
\end{displaymath} 
\end{lemma}

\begin{proof}
Due to \eqref{eq-s4.2-2}, the matrices 
$[F_2]$, $[F_3]$, and so on, are as follows:
\begin{displaymath}
[F_2]=
\left(
\begin{array}{cccccc}
0&0&1&0&0&\dots\\
0&1&0&1&0&\dots\\
1&0&1&0&1&\dots\\
0&1&0&1&0&\dots\\
0&0&1&0&1&\dots\\
\vdots&\vdots&\vdots&\vdots&\ddots&\ddots
\end{array}
\right),\quad
[F_3]=
\left(
\begin{array}{cccccc}
0&0&0&1&0&\dots\\
0&0&1&0&1&\dots\\
0&1&0&1&0&\dots\\
1&0&1&0&1&\dots\\
0&1&0&1&0&\dots\\
\vdots&\vdots&\vdots&\vdots&\ddots&\ddots
\end{array}
\right),\dots
\end{displaymath}
In particular, from \cite[Lemma~8]{AM}
it follows that $F_i N_0\cong N_i$.
Denote by $\mathcal{N}$ the additive closure of 
all $N_i$. Then $\mathcal{N}$ is semi-simple
and has the structure of a $\mathscr{C}$-module 
category by restriction. By \cite[Lemma~8]{AM},
the matrix of the action of $F_1$ on 
$\mathcal{N}$ is given by \eqref{eq-s4.2-2}.
Now the statement of the Lemma follows from 
the matrix of $F_i$.
\end{proof}

Recall that we have a non-zero
morphism $\varphi: M_i\to M_{i+2}$.
Then $\varphi(M_i)$ is a submodule of the
radical of $M_{i+2}$. By Lemma~\ref{lem-s4.2-3}, 
we have $F_1(N_{i+2})\cong N_{i+1}\oplus N_{i+3}$.
This means that, applying the exact functor $F_1$
to the short exact sequence
\begin{displaymath}
0\to \mathrm{Rad}(M_{i+2}) \to
M_{i+2}\to N_{i+2}\to 0,
\end{displaymath}
we get a short exact sequence
\begin{displaymath}
0\to F_1(\mathrm{Rad}(M_{i+2})) \to
F_1(M_{i+2})\to F_1(N_{i+2})\to 0,
\end{displaymath}
in which $F_1(N_{i+2})$ is isomorphic to the top of $F_1(M_{i+2})$.
Therefore we have 
\begin{displaymath}
F_1(\mathrm{Rad}(M_{i+2}))=
\mathrm{Rad}(F_1(M_{i+2})).
\end{displaymath}
As $\varphi(M_i)$
is a submodule of $\mathrm{Rad}(M_{i+2})$
and $F_1$ is exact, we obtain that 
$F_1(\varphi(M_i))$ is a submodule of
$\mathrm{Rad}(F_1(M_{i+2}))$, which means 
that $F_1(\varphi)$ is a radical morphism.
This is exactly what
we needed to prove. This completes our argument that
the radical of $\mathcal{M}$ is $\mathscr{C}$-stable.

Now, the fact that $\mathcal{M}$ is simple transitive
implies that the radical of $\mathcal{M}$ is zero.
In other words, $\mathcal{M}$ is semi-simple.

By Yoneda Lemma, sending the unit object $L(0)$
to $X$ gives rise to a homomorphism of 
$\mathscr{C}$-module categories from 
${}_{\mathscr{C}}\mathscr{C}$ to
$\mathcal{M}$. From the previous paragraph,
this homomorphism is an equivalence.
\end{proof}

\section{Realizations of  type $A_\infty^\infty$}\label{s5}

\subsection{First realization}\label{s5.1}

Consider the setup of Subsection~\ref{s4.2}.
Let $\mathfrak{g}=\mathfrak{sl}_2$ with
$\varphi$ being the identity. Consider the BGG category
$\mathcal{O}$ for $\mathfrak{g}$.
For $\lambda\in\mathbb{C}\setminus\mathbb{Z}$,
consider the simple highest weight module
$L(\lambda)$ with highest weight $\lambda$.
Then the module $L(\lambda)$ is also projective in $\mathcal{O}$.

For a fixed $\mu\in \mathbb{C}\setminus\mathbb{Z}$,
let $\mathcal{N}_1=\mathcal{N}_1(\mu)$ be the additive closure of
all $L(\lambda)$ such that $\lambda-\mu\in\mathbb{Z}$.
Then, for any such $\lambda$, we have
\begin{displaymath}
L(1)\otimes_{\mathbb{C}}L(\lambda)\cong
L(\lambda-1)\oplus L(\lambda+1).
\end{displaymath}
In particular, $\mathcal{N}_1$ is a 
semi-simple and transitive $\mathscr{C}$-module
category.

From the above formula, we get that 
\begin{equation}\label{eq-s5.1-1}
[L(1)]=
\left(
\begin{array}{cccccc}
\ddots&\vdots&\vdots&\vdots&\vdots&\ddots\\
\dots&0&1&0&0&\dots\\
\dots&1&0&1&0&\dots\\
\dots&0&1&0&1&\dots\\
\dots&0&0&1&0&\dots\\
\ddots&\vdots&\vdots&\vdots&\vdots&\ddots
\end{array}
\right)
\end{equation}

\subsection{Second realization}\label{s5.2}

Let $\mathfrak{g}$ be the Schr{\"o}dinger Lie algebra
in the $1+1$-dimensional space-time,
see \cite{DLMZ}. This is a Lie algebra with basis
$\{e,f,h,p,q,z\}$, where $z$ is central and 
the rest of the Lie bracket is given by
\begin{gather*}
[h,e]=2e,\quad 
[h,f]=-2f,\quad 
[e,f]=h,\quad
[e,q]=p,\quad
[e,p]=0,\\
[h,p]=p,\quad
[f,p]=q,\quad
[f,q]=0,\quad
[h,q]=-q,\quad
[p,q]=z.
\end{gather*}
The subalgebra of $\mathfrak{g}$ generated by $e,f,h$
is isomorphic to $\mathfrak{sl}_2$ and the remaining
basis elements of $\mathfrak{g}$ span a 
nilpotent ideal. Thus, factoring that ideal out defines
a surjective Lie algebra homomorphism
$\varphi:\mathfrak{g}\to \mathfrak{sl}_2$.
Therefore we can view $\mathscr{C}$ as a monoidal 
subcategory of $\mathfrak{g}$-mod.

We have a natural triangular decomposition
$\mathfrak{g}=\mathfrak{n}_-\oplus \mathfrak{h}\oplus\mathfrak{n}_+$,
where $\mathfrak{n}_-$ is spanned by $f$ and $q$,
$\mathfrak{h}$ is spanned by $h$ and $z$, and
$\mathfrak{n}_+$ is spanned by $e$ and $p$.
Associated to this triangular decomposition,
we have the corresponding BGG category $\mathcal{O}$,
see \cite{DLMZ}. For $\lambda\in \mathfrak{h}^*$,
we denote by $L(\lambda)$ the simple highest weight
$\mathfrak{g}$-module with highest weight $\lambda$, this module 
is an object in $\mathcal{O}$.

Let $\lambda\in \mathfrak{h}^*$ be such that
$\lambda(z)\neq 0$ and $\lambda(h)\not\in\frac{1}{2}\mathbb{Z}$.
Then, due to \cite[Proposition~3]{DLMZ}, the module
$L(\lambda)$ is projective in $\mathcal{O}$.
Let $\varepsilon\in \mathfrak{h}^*$ be such that
$\varepsilon(z)=0$ and $\varepsilon(h)=1$.
By comparing the characters, we obtain that 
\begin{displaymath}
L(1)\otimes_\mathbb{Z}  L(\lambda)\cong
L(\lambda+\varepsilon)\oplus L(\lambda-\varepsilon).
\end{displaymath}

Now, for a fixed $\mu \in \mathfrak{h}^*$ such that $\mu(z)\neq 0$ 
and $\mu(h)\not\in\frac{1}{2}\mathbb{Z}$, define  
$\mathcal{N}_2$ as the additive closure of
all $L(\lambda)$ such that $\lambda-\mu\in\mathbb{Z}\varepsilon$.
Then the formula in the previous paragraph implies that
$\mathcal{N}_2$ is a semi-simple and transitive $\mathscr{C}$-module
category in which $[L(1)]$ is given by Formula~\eqref{eq-s5.1-1}.

\subsection{Third realization}\label{s5.3}

Let $\mathfrak{g}=\mathfrak{sl}_2\otimes_{\mathbb{C}}
\mathbb{C}[x]/(x^2)$ be the Takiff Lie algebra of
$\mathfrak{sl}_2$ and $\varphi:\mathfrak{g}\to\mathfrak{sl}_2$
be the natural projection. For $g\in \mathfrak{sl}_2$,
denote by  $\overline{g}$ the element $g\otimes x\in 
\mathfrak{g}$. Then the center 
$Z(\mathfrak{g})$ is generated by the elements
\begin{displaymath}
C=\overline{h}h +2\overline{e}f+2\overline{f}e\quad
\text{ and }\quad
\overline{C}=\overline{h}^2+4\overline{f}\overline{e}.
\end{displaymath}
The standard triangular decomposition of $\mathfrak{sl}_2$
induces a triangular decomposition of $\mathfrak{g}$
in the obvious way.
For $\lambda\in\mathbb{C}\setminus\mathbb{Z}$, consider 
the corresponding highest weight $\mathfrak{sl}_2$-module
$L(\lambda)$ with highest weight $\lambda$ and
set 
\begin{displaymath}
Q(\lambda):=U(\mathfrak{g})\bigotimes_{U(\mathfrak{sl}_2)}L(\lambda).
\end{displaymath}
Now,  for $\chi\in \mathbb{C}$, define
\begin{displaymath}
Q(\lambda,\chi):=Q(\lambda)/(\overline{C}-\chi)Q(\lambda)
\end{displaymath}
and, for $\theta\in \mathbb{C}$, define
\begin{displaymath}
Q(\lambda,\chi,\theta):=Q(\lambda,\chi)/(C-\theta)Q(\lambda,\chi). 
\end{displaymath}
Then, by \cite[Proposition~3.2]{Zh}, the module
$Q(\lambda,\chi,\theta)$ is a simple $\mathfrak{g}$-module
if and only if $\theta\neq \sqrt{\chi}(\lambda+2k)$, for
all $k\in\mathbb{Z}$. If the latter conditions are satisfied,
we have $Q(\lambda,\chi,\theta)\cong Q(\lambda',\chi',\theta')$
if and only if $\chi=\chi'$, $\theta=\theta'$ and
$\lambda-\lambda'\in 2\mathbb{Z}$.

Now let us fix $\lambda,\chi,\theta$ as above, satisfying
$\lambda\in\mathbb{C}\setminus\mathbb{Z}$ and
$\theta^2\neq \chi(\lambda+j)^2$, for
all $j\in\mathbb{Z}$, and, additionally, $\chi\neq 0$. 
Denote by $\mathcal{N}_3$ the additive
closure of $Q(\lambda+i,\chi,\theta+i\sqrt{\chi})$, for all
$i\in\mathbb{Z}$.

\begin{lemma}\label{lem-s5.3-1}
For $\lambda,\chi,\theta$ as above, the module
$L(1)\otimes_{\mathbb{C}}Q(\lambda,\chi,\theta)$
is isomorphic to the module
$Q(\lambda+1,\chi,\theta+\sqrt{\chi})\oplus 
Q(\lambda-1,\chi,\theta-\sqrt{\chi})$.
\end{lemma}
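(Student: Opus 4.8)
The plan is to produce the two summands, with the correct sizes, from a projection-formula computation, and to pin down their central parameters $(\chi,\theta\pm\sqrt{\chi})$ by analysing the action of the two generators $\overline{C}$ and $C$ of $Z(\mathfrak{g})$ on the tensor product.

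First I would use that $Q(\lambda)=U(\mathfrak{g})\otimes_{U(\mathfrak{sl}_2)}L(\lambda)$ is induced from the subalgebra $\mathfrak{sl}_2\otimes1\subset\mathfrak{g}$, so that the tensor identity applies to the $\mathfrak{g}$-module $L(1)$ and gives
\[
L(1)\otimes_{\mathbb{C}}Q(\lambda)\cong U(\mathfrak{g})\otimes_{U(\mathfrak{sl}_2)}\bigl(\operatorname{Res}L(1)\otimes_{\mathbb{C}}L(\lambda)\bigr).
\]
Since $\operatorname{Res}L(1)$ is the two-dimensional $\mathfrak{sl}_2$-module and $\lambda\notin\mathbb{Z}$, the Clebsch--Gordan rule gives $\operatorname{Res}L(1)\otimes_{\mathbb{C}}L(\lambda)\cong L(\lambda+1)\oplus L(\lambda-1)$: the Verma modules $\Delta(\lambda\pm1)$ are simple with distinct central characters, so their standard filtration on the tensor product splits. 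Hence $L(1)\otimes_{\mathbb{C}}Q(\lambda)\cong Q(\lambda+1)\oplus Q(\lambda-1)$, and, as $L(1)\otimes_{\mathbb{C}}-$ is exact, applying it to the quotient map $Q(\lambda)\twoheadrightarrow Q(\lambda,\chi,\theta)$ realises $N:=L(1)\otimes_{\mathbb{C}}Q(\lambda,\chi,\theta)$ as a quotient of $Q(\lambda+1)\oplus Q(\lambda-1)$.

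Next I would compute $\overline{C}$ and $C$ on $N$, using that every barred element $\overline{g}=g\otimes x$ kills $L(1)$ because $\varphi(\overline{g})=0$. Expanding the coproducts, every term of $\Delta(\overline{C})$ and of $\Delta(C)$ whose first tensor leg is barred vanishes on $L(1)\otimes-$, which leaves $\overline{C}|_{N}=\chi$ and $C|_{N}=\theta\cdot\operatorname{id}+D$ with $D=h\otimes\overline{h}+2f\otimes\overline{e}+2e\otimes\overline{f}$. The key identity is $D^{2}=1\otimes\overline{C}$: expanding $D^{2}$ and using $h^{2}=\operatorname{id}$, $e^{2}=f^{2}=0$, $hf+fh=0$, $he+eh=0$ and $fe+ef=\operatorname{id}$ on $L(1)$ collapses all mixed terms and leaves $D^{2}=1\otimes(\overline{h}^{2}+4\overline{f}\,\overline{e})=1\otimes\overline{C}$. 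On $N$ this reads $D^{2}=\chi\cdot\operatorname{id}$, so, as $\chi\neq0$, the operator $D$ is semisimple with eigenvalues $\pm\sqrt{\chi}$; writing $N=N_{+}\oplus N_{-}$ for the eigenspace decomposition (a decomposition into $\mathfrak{g}$-submodules, since $C$ is central) we obtain $\overline{C}=\chi$ and $C=\theta\pm\sqrt{\chi}$ on $N_{\pm}$. Moreover $D$ cannot be scalar --- otherwise a direct inspection of $D$ on the vectors $v\otimes w$ would force $\overline{e},\overline{f},\overline{h}$ to annihilate $Q(\lambda,\chi,\theta)$ and hence $\overline{C}=0\neq\chi$ --- so both $N_{+}$ and $N_{-}$ are nonzero.

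Finally I would identify the summands. The composite surjection $Q(\lambda+1)\oplus Q(\lambda-1)\twoheadrightarrow N\twoheadrightarrow N_{\pm}$ kills $(\overline{C}-\chi)$ and $(C-(\theta\pm\sqrt{\chi}))$, hence factors through $Q(\lambda+1,\chi,\theta\pm\sqrt{\chi})\oplus Q(\lambda-1,\chi,\theta\pm\sqrt{\chi})$; since $(\lambda+1)-(\lambda-1)=2$, the isomorphism criterion of \cite[Proposition~3.2]{Zh} identifies these two summands, so $N_{\pm}$ is a nonzero quotient of two copies of the single simple module $Q(\lambda\pm1,\chi,\theta\pm\sqrt{\chi})$ (simple by the standing hypotheses and the same proposition). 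To see that the multiplicity is exactly one I would compare formal characters: by the projection formula the character of $N$ equals that of $Q(\lambda,\chi,\theta)$ multiplied by the character of $L(1)$, and the explicit weight structure of these modules in \cite{Zh} then forces each simple module to occur exactly once, yielding $N_{\pm}\cong Q(\lambda\pm1,\chi,\theta\pm\sqrt{\chi})$ and the asserted decomposition (noting once more that $\lambda+1$ and $\lambda-1$ label the same isomorphism class once the distinguishing value of $C$ is fixed). The main obstacle is precisely this last identification: because $Q(\lambda,\chi,\theta)$ is not a highest weight module, the summands cannot be located by highest weight vectors, and one must combine the projection-formula surjection with the exact constraints $\overline{C}=\chi$, $C=\theta\pm\sqrt{\chi}$ and a multiplicity count, all resting on the classification in \cite{Zh}.
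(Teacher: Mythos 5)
Your proof is correct, and it arrives at the same eigenvalues $\theta\pm\sqrt{\chi}$ of the central element $C$ as the paper does, but by a genuinely different, more global mechanism. The paper restricts $Q(\lambda,\chi,\theta)$ to $\mathfrak{sl}_2$, locates the two-dimensional space of $\mathfrak{sl}_2$-singular vectors of weight $\lambda+1$ spanned by $u=b_1\otimes v$ and $w=b_1\otimes(\overline{h}v)+2b_{-1}\otimes(\overline{e}v)$, computes the matrix $\left(\begin{smallmatrix}\theta&\chi\\1&\theta\end{smallmatrix}\right)$ of $C$ on that span, and then appeals to the universal property and simplicity of $Q(\lambda\pm1,\chi,\theta\pm\sqrt{\chi})$. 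Your operator $D=C|_N-\theta\cdot\mathrm{id}$ is exactly this computation carried out on all of $N$ at once: one has $w=Du$, and the matrix above merely records $Du=w$ and $Dw=\chi u$, so your identity $D^2=1\otimes\overline{C}$ is the global form of the paper's $2\times 2$ calculation (your verification of it, using the commutativity of the barred elements and the relations $h^2=1$, $e^2=f^2=0$, $ef+fe=1$, $he+eh=hf+fh=0$ on $L(1)$, is correct). What your route buys is that $C$ is seen to act semisimply on the \emph{whole} of $N$, so the decomposition $N=N_+\oplus N_-$ into $\mathfrak{g}$-submodules comes for free from eigenspaces of a central element, rather than being assembled from submodules generated by singular vectors; you also make explicit, via the tensor identity and the restriction multiplicities of \cite[Lemma~3.1]{Zh}, the multiplicity count that the paper compresses into the phrase about the universal property. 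What the paper's route buys is brevity and explicit generators for the two summands. The remaining ingredients you invoke --- simplicity of the shifted modules under the standing hypothesis $\theta^2\neq\chi(\lambda+j)^2$, the isomorphism $Q(\lambda+1,\chi,\theta')\cong Q(\lambda-1,\chi,\theta')$ from \cite[Proposition~3.2]{Zh}, and the non-scalarity of $D$ --- all check out, so the argument is complete.
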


\begin{proof}
Note that $\overline{g}$ kills $L(1)$, for all $g\in \mathfrak{sl_2}$.
Therefore $\overline{C}$ acts on 
$L(1)\otimes_{\mathbb{C}}Q(\lambda,\chi,\theta)$
with the same scalar, namely $\chi$, as on $Q(\lambda,\chi,\theta)$.

Let us determine the action of $C$. By \cite[Lemma~3.1]{Zh},
the module $Q(\lambda,\chi,\theta)$, when restricted to
$\mathfrak{sl}_2$, is a multiplicity-free 
direct sum of the simple highest weight
modules $L(\lambda+i)$, where $i\in 2\mathbb{Z}$. As
$L(1)\otimes_{\mathbb{C}}L(\lambda+i)$ is isomorphic to
$L(\lambda+i+1)\oplus L(\lambda+i-1)$, the module
$L(1)\otimes_{\mathbb{C}}Q(\lambda,\chi,\theta)$
has two linearly independent elements of weight $\lambda+1$
that are killed by $e$. Let $\{b_1,b_{-1}\}$ be the standard basis
of $L(1)$, that is:
\begin{displaymath}
hb_1=b_1,\quad
hb_{-1}=-b_{-1},\quad
eb_{-1}=b_1,\quad
eb_1=0,\quad
fb_{-1}=0,\quad
fb_1=b_{-1}. 
\end{displaymath}
Let $v$
be a non-zero highest weight vector of $L(\lambda)\subset Q(\lambda,\chi,\theta)$.
Then we can take, as two linearly independent vectors in 
$L(1)\otimes_{\mathbb{C}}Q(\lambda,\chi,\theta)$
of weight $\lambda+1$ that are killed by $e$, the elements
\begin{displaymath}
u=b_1\otimes v \quad\text{ and }\quad
w=b_1\otimes (\overline{h}v)+2b_{-1}\otimes (\overline{e}v).
\end{displaymath}
It is easy to check that the matrix of the action of
$C$ on the linear span of $u$ and $w$ in the basis 
$\{u,w\}$ is given by
\begin{displaymath}
\left(\begin{array}{cc}\theta&\chi\\1&\theta\end{array}\right). 
\end{displaymath}
The eigenvalues of this matrix are $\theta\pm\sqrt{\chi}$
and they are different thanks to our assumption $\chi\neq 0$.
Now the claim of the lemma follows from the universal property
of the modules $Q(\lambda+1,\chi,\theta\pm\sqrt{\chi})$.
Note that here we use the simplicity of the modules
$Q(\lambda+1,\chi,\theta\pm\sqrt{\chi})$ which is
guaranteed by \cite[Proposition~3.2]{Zh} and
our assumption that $\theta^2\neq \chi(\lambda+j)^2$, for
all $j\in\mathbb{Z}$.
\end{proof}

Lemma~\ref{lem-s5.3-1} implies that
$\mathcal{N}_3$ is a semi-simple and transitive $\mathscr{C}$-module
category in which $[L(1)]$ is given by Formula~\eqref{eq-s5.1-1}.

\subsection{Fourth realization}\label{s5.4}

We stay in the setup of the previous subsection, that is, we
let $\mathfrak{g}=\mathfrak{sl}_2\otimes_{\mathbb{C}}
\mathbb{C}[x]/(x^2)$ be the Takiff Lie algebra of
$\mathfrak{sl}_2$ and $\varphi:\mathfrak{g}\to\mathfrak{sl}_2$
be the natural projection. Let $\chi\in\mathbb{C}^*$ and 
$\lambda\in \mathbb{C}^*$ be such that $\lambda^2=\chi$.
Then, \cite[Theorem~31]{MMr} describes a family of
simple $\mathfrak{g}$-module  $V(n,\lambda)$,
for $n\in\mathbb{Z}$, such that
\begin{itemize}
\item  $\overline{C}$ acts on $V(n,\lambda)$ as $\chi$;
\item when restricted to $\mathfrak{sl}_2$, the module $V(n,\lambda)$
is a direct sum of $L(|n|)$, $L(|n|+2)$ and so on;
\item $V(n,\lambda)=V(n',\lambda')$ if and only if 
$(n',\lambda')=(n,\lambda)$ or $(n',\lambda')=(-n,-\lambda)$.
\end{itemize}
By \cite[Proposition~29]{MMr}, we have 
\begin{displaymath}
L(1)\otimes_{\mathbb{C}}V(n,\lambda)\cong 
V(n-1,\lambda)\oplus V(n+1,\lambda).
\end{displaymath}

This implies that the additive closure $\mathcal{N}_4$ of all
$V(n,\lambda)$, for a fixed $\lambda$ and for all $n\in\mathbb{Z}$,
is a semi-simple and transitive $\mathscr{C}$-module
category in which $[L(1)]$ is given by Formula~\eqref{eq-s5.1-1}.

\subsection{Statements}\label{s5.7}

We can now summarize the content of this section
in the following proposition:

\begin{proposition}\label{prop-s5.7-1}
The categories  $\mathcal{N}_1$, $\mathcal{N}_2$, $\mathcal{N}_3$ 
and $\mathcal{N}_4$ are simple transitive 
$\mathscr{C}$-module categories of type $A_\infty^\infty$.
\end{proposition}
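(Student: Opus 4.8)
The plan is to verify two things for each of the four categories $\mathcal{N}_1,\dots,\mathcal{N}_4$: that each is a genuine \emph{simple transitive} $\mathscr{C}$-module category, and that its combinatorial type is $A_\infty^\infty$. Both tasks have, in fact, already been reduced to near-triviality by the constructions in Subsections~\ref{s5.1}--\ref{s5.4}, so the proof should be short. For each $\mathcal{N}_k$ the relevant action formula $L(1)\otimes_\mathbb{C}L(\lambda)\cong L(\lambda-1)\oplus L(\lambda+1)$ (and its analogues for $\mathcal{N}_2$, $\mathcal{N}_3$ via Lemma~\ref{lem-s5.3-1}, and $\mathcal{N}_4$ via \cite[Proposition~29]{MMr}) was explicitly recorded, and in each case it was already observed that this makes $[L(1)]$ equal to the matrix of Formula~\eqref{eq-s5.1-1}.

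First I would address the type. By Formula~\eqref{eq-s5.1-1}, the matrix $[L(1)]=\widetilde{C}$ is the bi-infinite tridiagonal matrix with ones on the two off-diagonals, which is precisely the adjacency matrix of the diagram $A_\infty^\infty$ drawn in Subsection~\ref{s3.4}. This identification is immediate from the shape of the matrix and needs no appeal to Proposition~\ref{prop-s3.4-1}, since we are handed the matrix directly rather than only the abstract property of annihilating a positive vector.

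Next I would establish simple transitivity. Transitivity was already noted in each subsection, following from the fact that $[L(1)]$ is the $A_\infty^\infty$ adjacency matrix, whose associated action graph is strongly connected (one can pass between any two indices by repeatedly applying $F_1$, moving one step left or right each time); equivalently one checks the entrywise-nonzero-power criterion from Subsection~\ref{s2.3}. For simplicity, the key point is that each $\mathcal{N}_k$ is \emph{semi-simple}: in every case the indecomposable objects are pairwise non-isomorphic \emph{simple} $\mathfrak{g}$-modules ($L(\lambda)$ in $\mathcal{N}_1$ and $\mathcal{N}_2$, the simple $Q(\lambda,\chi,\theta)$ in $\mathcal{N}_3$, and the simple $V(n,\lambda)$ in $\mathcal{N}_4$), so by Schur's lemma all hom-spaces between non-isomorphic indecomposables vanish and the category is semi-simple. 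A transitive semi-simple $\mathscr{C}$-module category has no nonzero proper $\mathscr{C}$-invariant ideals, exactly as in the proof of Proposition~\ref{prop-s4.2-1}: any nonzero ideal contains an identity endomorphism of some object, and transitivity then forces it to contain all objects. Hence each $\mathcal{N}_k$ is simple transitive.

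The only step requiring genuine care, rather than bookkeeping, is confirming that the objects generating $\mathcal{N}_3$ and $\mathcal{N}_4$ really are simple and pairwise distinct, since semi-simplicity and the Schur argument both rest on this. For $\mathcal{N}_3$ this is precisely where the standing hypotheses $\chi\neq 0$ and $\theta^2\neq\chi(\lambda+j)^2$ for all $j\in\mathbb{Z}$ enter: they guarantee, via \cite[Proposition~3.2]{Zh}, that every $Q(\lambda+i,\chi,\theta+i\sqrt{\chi})$ is simple, and the isomorphism criterion in the same proposition (together with $\lambda\notin\mathbb{Z}$) shows these are pairwise non-isomorphic as $i$ ranges over $\mathbb{Z}$. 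For $\mathcal{N}_4$ the corresponding facts are supplied by \cite[Theorem~31]{MMr}, whose listed properties include simplicity and the isomorphism criterion $V(n,\lambda)\cong V(n',\lambda')$ iff $(n',\lambda')=\pm(n,\lambda)$; fixing $\lambda$ makes the $V(n,\lambda)$ distinct for distinct $n$. I expect this verification to be the main (if modest) obstacle, as it is the one place where the proof genuinely invokes external results rather than the self-contained computations already displayed.
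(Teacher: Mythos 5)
Your proposal is correct and follows essentially the same route as the paper: the paper's proof is simply "all these categories are simple transitive as they are transitive and semi-simple; the identification of the type follows from Formula~\eqref{eq-s5.1-1}", relying on the computations already recorded in Subsections~\ref{s5.1}--\ref{s5.4}. Your additional verifications (Schur's lemma for semi-simplicity, and the simplicity/non-isomorphism hypotheses for $\mathcal{N}_3$ and $\mathcal{N}_4$) are exactly the details the paper leaves implicit in those subsections.
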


\begin{proof}
All these categories are simple
transitive as they are transitive and semi-simple.
The identification of the type  follows from Formula~\eqref{eq-s5.1-1}.
\end{proof}

\subsection{Comparison of $\mathcal{N}_1(\mu)$}\label{s5.8}

We can also compare the $\mathscr{C}$-module categories 
$\mathcal{N}_1(\mu)$ with each other, for varying values of $\mu$.
For the proofs, we need to recall the following notion:
for two $\mathfrak{sl}_2$-modules $M$ and $N$, we denote
by $\mathcal{L}(M,N)$ the 
$U(\mathfrak{sl}_2)$-$U(\mathfrak{sl}_2)$-subbimodule
of $\mathrm{Hom}_\mathbb{C}(M,N)$ consisting of 
all elements of the latter, the adjoint action of 
$\mathfrak{sl}_2$ on which is locally finite,
see \cite[Kapitel~6]{Ja} for details.

\begin{proposition}\label{prop-s5.8-1}
Let $\mu_1,\mu_2\in\mathbb{C}\setminus\mathbb{Z}$. Then
the $\mathscr{C}$-module categories $\mathcal{N}_1(\mu_1)$
and $\mathcal{N}_1(\mu_2)$ are equivalent, as
$\mathscr{C}$-module categories, if and only if 
$\mu_2\in \pm\mu_1+\mathbb{Z}$.
\end{proposition}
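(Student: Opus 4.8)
The plan is to prove the equivalence by establishing the two implications separately, with the ``if'' direction being constructive and the ``only if'' direction relying on an invariant that distinguishes the categories. For the ``if'' direction, suppose $\mu_2 \in \pm\mu_1 + \mathbb{Z}$. First I would observe that the two categories $\mathcal{N}_1(\mu_1)$ and $\mathcal{N}_1(\mu_2)$ have the same indexing set: in both cases the indecomposable (= simple) objects are the modules $L(\lambda)$ with $\lambda$ ranging over a single coset of $\mathbb{Z}$ in $\mathbb{C}\setminus\mathbb{Z}$. If $\mu_2 \in \mu_1 + \mathbb{Z}$, the two cosets literally coincide and the categories are identical. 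The only genuine content is the case $\mu_2 \in -\mu_1 + \mathbb{Z}$, where I would construct the equivalence via the duality $L(\lambda) \mapsto L(-\lambda-2)$ on highest weights, noting that the Clebsch--Gordan formula $L(1)\otimes_{\mathbb{C}}L(\lambda) \cong L(\lambda-1)\oplus L(\lambda+1)$ is symmetric under $\lambda \mapsto -\lambda - 2$ (which sends $\lambda\pm 1$ to $-\lambda-2\mp 1$), so this relabeling is compatible with the $\mathscr{C}$-action and hence yields an equivalence of $\mathscr{C}$-module categories.

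\textbf{The invariant for the converse.}

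The ``only if'' direction is where the real work lies, and I expect it to be the main obstacle. The split Grothendieck data alone cannot distinguish these categories, since all $\mathcal{N}_1(\mu)$ realize the identical matrix \eqref{eq-s5.1-1} of type $A_\infty^\infty$; so a purely combinatorial invariant will not suffice. The plan is to extract a finer invariant from the actual hom-spaces in $\mathcal{N}_1(\mu)$ together with the $\mathscr{C}$-action, and here the bimodules $\mathcal{L}(L(\lambda),L(\lambda'))$ introduced just before the statement are the natural tool. Concretely, I would compute, for adjacent simple objects $L(\lambda)$ and $L(\lambda')$ with $\lambda - \lambda' \in \mathbb{Z}$, the space of $\mathscr{C}$-module-natural transformations or the relevant $\Hom$-spaces built from $\mathcal{L}(L(\lambda),L(\lambda'))$, and show that the pattern of these dimensions is controlled by the quantity $\mu \bmod \mathbb{Z}$ only up to the sign ambiguity $\mu \mapsto -\mu$. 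The Casimir eigenvalue of $L(\lambda)$ is $(\lambda+1)^2$, which depends on $\lambda$ only through $\pm(\lambda+1)$, and this is precisely the source of the $\pm$ in the statement; the key computation is to verify that the endomorphism data of the module category remembers exactly this $\pm(\mu+1) \bmod \mathbb{Z}$ and nothing more.

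\textbf{Assembling the argument.}

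Having isolated the invariant, I would argue as follows: any $\mathscr{C}$-module equivalence $\mathcal{N}_1(\mu_1) \simeq \mathcal{N}_1(\mu_2)$ must match up the unique (up to the order-two symmetry coming from the two ends of the $A_\infty^\infty$ diagram) labeling of indecomposables compatible with the $F_1$-action, and must preserve the hom-space/bimodule invariant computed above. Since that invariant is exactly $\pm(\mu+1)+\mathbb{Z}$, equivalence forces $\pm(\mu_1+1) \equiv \pm(\mu_2+1) \pmod{\mathbb{Z}}$, which is equivalent to $\mu_2 \in \pm\mu_1 + \mathbb{Z}$. The main obstacle, as noted, is the converse direction: specifically, proving that the bimodule $\mathcal{L}(L(\lambda),L(\lambda'))$ genuinely detects the coset $\pm(\mu+1)+\mathbb{Z}$ and is not accidentally insensitive to finer or coarser data. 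I would handle this by explicit computation using the structure theory of $\mathcal{L}(-,-)$ from \cite[Kapitel~6]{Ja}, reducing the question to comparing Casimir eigenvalues and the resulting block decomposition; because the simple highest weight modules here are projective in the relevant category $\mathcal{O}$ (as recorded in Subsection~\ref{s5.1}), these $\mathcal{L}$-bimodules are computable and the eigenvalue bookkeeping closes the argument.
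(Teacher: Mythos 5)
Your overall strategy (find an invariant of the $\mathscr{C}$-module structure that remembers exactly $\pm(\mu+1)+\mathbb{Z}$, with the Casimir eigenvalue $(\lambda+1)^2$ as the source of the sign) points in the right direction, and your identification of $\mathcal{L}(L(\lambda),L(\lambda))$ as the relevant object matches the paper. But the concrete invariant you propose does not work. You say you would compute ``the relevant $\Hom$-spaces built from $\mathcal{L}(L(\lambda),L(\lambda'))$'' and show that ``the pattern of these dimensions'' detects $\mu$ up to sign. It cannot: by the adjunction $\Hom_{\mathfrak{sl}_2}(V,\mathcal{L}(L(\lambda),L(\lambda)))\cong\Hom_{\mathfrak{sl}_2}(V\otimes_{\mathbb{C}}L(\lambda),L(\lambda))$, all of these dimensions are determined by the matrices $[F_i]$, which are the same for every $\mu$ (indeed, as an object of the ind-completion of $\mathscr{C}$, $\mathcal{L}(L(\lambda),L(\lambda))\cong U_{(\lambda+1)^2}\cong\bigoplus_{n\geq 0}L(2n)$ with all multiplicities one, independently of $\lambda$). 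What actually distinguishes the categories is the \emph{algebra structure} on this internal endomorphism object. The paper's argument needs two inputs you never invoke: the reconstruction theorem \cite[Theorem~7.10.1]{EGNO}, which says that $\mathcal{N}_1(\mu)$ is recovered as a category of modules over the algebra object $U_{(\lambda+1)^2}=\underline{\mathrm{Hom}}(L(\lambda),L(\lambda))$ inside ind-$\mathscr{C}$ (so that a $\mathscr{C}$-module equivalence forces an isomorphism of the corresponding internal endomorphism algebras of matching generators), and Dixmier's theorem \cite[Theorem~6.4]{Di0} that the primitive quotients satisfy $U_{\theta_1}\cong U_{\theta_2}$ if and only if $\theta_1=\theta_2$. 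Without these, ``eigenvalue bookkeeping'' has nothing to act on, since the Casimir is not visibly part of the abstract $\mathscr{C}$-module category data.

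The same gap undermines your ``if'' direction in the case $\mu_2\in-\mu_1+\mathbb{Z}$: you propose the relabeling $L(\lambda)\mapsto L(-\lambda-2)$ and justify it by the symmetry of the Clebsch--Gordan combinatorics. But compatibility at the level of the split Grothendieck group never suffices to produce an equivalence of module categories --- as you yourself observe one paragraph later, all $\mathcal{N}_1(\mu)$ share the identical matrix \eqref{eq-s5.1-1}, yet most of them are pairwise inequivalent. The honest way to get this direction is again through the internal hom: $L(\lambda)$ and $L(-\lambda-2)$ are both simple Verma modules with the same internal endomorphism algebra $U_{(\lambda+1)^2}$, so the two module categories are both equivalent to the category of $U_{(\lambda+1)^2}$-modules in ind-$\mathscr{C}$ and hence to each other. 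In short: the framework you chose is the right one, but the proof as written replaces the two theorems that carry all the weight (Ostrik/EGNO reconstruction and Dixmier's classification of $U_\theta$) by a dimension count that is provably insensitive to $\mu$.
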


\begin{proof}
For $\lambda\in \mu_1+\mathbb{Z}$, consider
$\theta=(\lambda+1)^2$ and let $U_\theta$ denote the 
(primitive) quotient  of $U(\mathfrak{sl}_2)$ by the
ideal generated by $\mathtt{c}-\theta$. Due to our
choice of $\lambda$, we have that $L(\lambda)$
is a Verma module and hence we have an isomorphism
$\mathcal{L}(L(\lambda),L(\lambda))\cong U_\theta$
of $U(\mathfrak{sl}_2)$-$U(\mathfrak{sl}_2)$-bimodules,
see \cite[Subsection~6.9]{Ja}.

Further, by \cite[Subsection~6.8]{Ja}, we have an
isomorphism 
\begin{displaymath}
\mathrm{Hom}_{\mathfrak{sl}_2}(V,
\mathcal{L}(L(\lambda),L(\lambda))) \cong
\mathrm{Hom}_{\mathfrak{sl}_2}
(V\otimes_{\mathbb{C}}L(\lambda),L(\lambda)),
\end{displaymath}
where $\mathcal{L}(L(\lambda),L(\lambda))$
is considered as an $\mathfrak{sl}_2$-module under
the adjoint action. Comparing this 
with \cite[Section~7.9]{EGNO}, we obtain that algebra
$U_\theta$ is the algebra ind-object corresponding to 
the internal hom $\underline{\mathrm{Hom}}(L(\lambda),L(\lambda))$
for the $\mathscr{C}$-module category $\mathcal{N}_1(\mu_1)$.
By \cite[Theorem~7.10.1]{EGNO} and 
\cite[Remark~7.9.1]{EGNO}, the latter module category can be 
recovered as the category of certain $U_\theta$-modules in
the ind-completion of $\mathscr{C}$.

This implies that 
the $\mathscr{C}$-module categories $\mathcal{N}_1(\mu_1)$
and $\mathcal{N}_1(\mu_2)$ are equivalent, as
$\mathscr{C}$-module categories, if and only if 
there exists $\lambda_i\in\mu_i+\mathbb{Z}$, for
$i=1,2$, such that $U_{(\lambda_1+1)^2}\cong
U_{(\lambda_2+1)^2}$. By \cite[Theorem~6.4]{Di0},
we have $U_{(\lambda_1+1)^2}\cong
U_{(\lambda_2+1)^2}$ if and only if 
$(\lambda_1+1)^2=(\lambda_2+1)^2$. The claim follows.
\end{proof}

\subsection{Non-semi-simple realization}\label{s5.87}

In this subsection we construct, slightly outside of our
setup, a non-semi-simple simple transitive 
$\mathscr{C}$-module category of type $A_\infty^\infty$.

Consider the Lie algebra $\mathfrak{g}=\mathfrak{sl}_2$
and its Borel subalgebra $\mathfrak{b}$ generated by
$h$ and $e$. Denote by $\mathcal{N}$ the category of
all $\mathfrak{b}$-modules that are
\begin{itemize}
\item finitely generated;
\item $h$-diagonalizable;
\item free over $U(e)$.
\end{itemize}
For $\lambda\in\mathbb{C}$, we have a $\mathfrak{b}$-module
$N(\lambda)\in \mathcal{N}$ with basis 
$\{v_\mu\,:\,\mu\in\lambda+2\mathbb{Z}_{\geq0}\}$
and the $\mathfrak{b}$-action given by
\begin{displaymath}
hv_\mu=\mu v_\mu,\qquad ev_\mu=v_{\mu+2}. 
\end{displaymath}
The $\mathfrak{b}$-module $N(\lambda)$ is indecomposable
and is a free $U(e)$-module of rank one. Every object in 
$\mathcal{N}$ is a direct sum of finitely many objects of 
the form $N(\lambda)$. Note that $\mathcal{N}$ is not
semi-simple as $N(\lambda+2)\hookrightarrow N(\lambda)$,
for any $\lambda$.
For $\lambda\in\mathbb{C}$,
denote by $Q(\lambda)$ the quotient $N(\lambda)/N(\lambda+2)$.
Then $Q(\lambda)$ is a simple $\mathfrak{b}$-module and any 
simple finite dimensional $\mathfrak{b}$-module is of such form.

\begin{lemma}\label{lem-s5.87-1}
The category  $\mathcal{N}$ has the structure of a 
$\mathscr{C}$-module category by restriction.
\end{lemma}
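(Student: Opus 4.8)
The plan is to regard $\mathcal{N}$ as a full subcategory of $\mathfrak{b}$-Mod and to check that it is closed under the $\mathscr{C}$-action. Since $\mathfrak{b}\hookrightarrow\mathfrak{sl}_2$ is a homomorphism of Lie algebras, restriction to $\mathfrak{b}$ is a monoidal functor $\mathscr{C}\to\mathfrak{b}$-Mod, so that $\mathfrak{b}$-Mod becomes a $\mathscr{C}$-module category via $F\cdot M:=F\otimes_{\mathbb{C}}M$, with the $\mathfrak{b}$-action on the tensor product given by the comultiplication of $U(\mathfrak{b})$; the associativity and unit constraints for $\mathcal{N}$ would then simply be inherited from this structure. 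First I would reduce the problem to showing $F\otimes_{\mathbb{C}}M\in\mathcal{N}$ for $F\in\mathscr{C}$ and $M\in\mathcal{N}$, and, since every object of $\mathcal{N}$ is a finite direct sum of the $N(\lambda)$ and the action is additive in $M$, to the case $M=N(\lambda)$. Two of the three defining conditions are then immediate: $h(x\otimes v)=(hx)\otimes v+x\otimes(hv)$ shows that a tensor product of $h$-diagonalizable modules is $h$-diagonalizable, and finite generation over $U(\mathfrak{b})$ will follow from freeness of finite rank over $U(e)$, because $U(e)\subset U(\mathfrak{b})$.

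The hard part is freeness over $U(e)=\mathbb{C}[e]$, and this is the only step I expect to carry real content. Conceptually, $U(e)$ is the cocommutative Hopf algebra of the one-dimensional Lie algebra $\mathbb{C}e$, and $N(\lambda)\cong U(e)$ is free of rank one on $v_\lambda$; the tensor identity for this Hopf algebra provides a $U(e)$-module isomorphism $F\otimes_{\mathbb{C}}U(e)\cong U(e)^{\oplus\dim F}$, whence $F\otimes_{\mathbb{C}}N(\lambda)$ is free over $U(e)$ of rank $\dim F$. In practice I would verify the generating case $F=L(1)$ by hand: writing $L(1)=\langle b_1,b_{-1}\rangle$ as in the proof of Lemma~\ref{lem-s5.3-1}, one has $e(b_1\otimes v_\mu)=b_1\otimes v_{\mu+2}$ and $e(b_{-1}\otimes v_\mu)=b_1\otimes v_\mu+b_{-1}\otimes v_{\mu+2}$, and a triangular induction on the $h$-weight shows that $b_1\otimes v_\lambda$ and $b_{-1}\otimes v_\lambda$ generate $L(1)\otimes_{\mathbb{C}}N(\lambda)$ freely over $U(e)$; a graded dimension count, degree by degree in the weight, confirms that the surjection from the rank-two free module is an isomorphism.

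Finally I would bootstrap from $L(1)$ to an arbitrary $F$. Each defining property of $\mathcal{N}$ passes to direct summands: $h$-diagonalizability and finite generation are clear (a summand is a quotient of a finitely generated module), and a direct summand of a finitely generated free module over the principal ideal domain $U(e)$ is again free. Iterating the case $F=L(1)$ gives $L(1)^{\otimes k}\otimes_{\mathbb{C}}N(\lambda)\in\mathcal{N}$ for all $k$, and since $L(m)$ is a direct summand of $L(1)^{\otimes m}$ by Clebsch--Gordan, each $L(m)\otimes_{\mathbb{C}}N(\lambda)$ is a direct summand of an object of $\mathcal{N}$ and hence lies in $\mathcal{N}$. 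By additivity in both variables this yields $F\otimes_{\mathbb{C}}M\in\mathcal{N}$ for all $F\in\mathscr{C}$ and $M\in\mathcal{N}$, which is exactly the closure needed to make $\mathcal{N}$ a $\mathscr{C}$-module category by restriction.
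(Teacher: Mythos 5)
Your proof is correct and follows essentially the same route as the paper: the paper dismisses $h$-diagonalizability and finite generation as clear and reduces the $U(e)$-freeness to the standard argument for preservation of Verma flags under tensoring with finite dimensional modules, which is precisely the Hopf-algebra tensor identity for $U(e)=\mathbb{C}[e]$ that you invoke. Your explicit check for $L(1)$ and the Clebsch--Gordan/direct-summand bootstrap are correct but not needed once the tensor identity is in place for arbitrary $F$.
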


\begin{proof}
Given $N\in \mathcal{N}$ and a finite dimensional 
$\mathfrak{sl}_2$-module $V$, we need to show that 
$(\mathrm{Res}^{\mathfrak{sl}_2}_{\mathfrak{b}}(V))\otimes_{\mathbb{C}}N$
belongs to $\mathcal{N}$. This module is, clearly,
$h$-diagonalizable and it is finitely generated as
$N$ is finitely generated and $V$ is finite dimensional.
The fact that this module is $U(e)$-free is proved similarly to the
well-known statement that tensoring with finite dimensional
modules in category $\mathcal{O}$ preserves the 
category of modules with Verma flag.
\end{proof}

For a fixed $ \lambda\in\mathbb{C}$, denote by 
$\mathcal{N}_5=\mathcal{N}_5(\lambda)$ the full 
additive subcategory of $\mathcal{N}$ generated by all
$N(\mu)$, for $\mu\in\lambda+\mathbb{Z}$. Clearly,
$\mathcal{N}_5$ inherits from $\mathcal{N}$ the structure of a
$\mathscr{C}$-module category by restriction. 

\begin{remark}\label{rem13}
{\em
Note that,
for any $\lambda,\lambda'\in\mathbb{C}$, the categories
$\mathcal{N}_5(\lambda)$ and $\mathcal{N}_5(\lambda')$
are equivalent as $\mathscr{C}$-module categories.
The equivalence from $\mathcal{N}_5(\lambda)$ to 
$\mathcal{N}_5(\lambda')$ is given by tensoring with $Q(\lambda'-\lambda)$
with the inverse given by tensoring with $Q(\lambda-\lambda')$.
In particular, the group
$\mathbb{Z}$ acts on $\mathcal{N}_5(\lambda)$ by
self-equivalences, where $k\in \mathbb{Z}$
maps $N(\lambda)$ to $N(\lambda+k)$.
}
\end{remark}

\begin{proposition}\label{lem-s5.87-2}
The  $\mathscr{C}$-module category $\mathcal{N}_5$ is simple
transitive of type $A_\infty^\infty$.
\end{proposition}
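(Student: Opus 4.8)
The plan is to first pin down the morphism spaces and the action of $L(1)$, and then to run an ideal-reduction argument tailored to the non-semi-simple situation. Since every object of $\mathcal{N}_5$ is a finite direct sum of the indecomposables $N(\mu)$, $\mu\in\lambda+\mathbb{Z}$, and a $\mathfrak{b}$-homomorphism out of $N(\mu)$ is determined by the image of the cyclic generator $v_\mu$, I would first check that $\operatorname{Hom}(N(\mu),N(\nu))$ is one-dimensional, spanned by the natural inclusion $\iota_{\mu,\nu}$, when $\mu-\nu\in 2\mathbb{Z}_{\geq 0}$, and is zero otherwise. In particular $\operatorname{End}(N(\mu))=\mathbb{C}$, so $\mathcal{N}_5$ is Krull-Schmidt and locally finitary, with countably many indecomposables.

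Next I would establish the action formula $L(1)\otimes_{\mathbb{C}}N(\mu)\cong N(\mu-1)\oplus N(\mu+1)$. Rather than exhibiting a splitting by hand, I would compare $h$-characters: since $\operatorname{ch}L(1)=e^{1}+e^{-1}$ and $\operatorname{ch}N(\mu)=\sum_{j\ge0}e^{\mu+2j}$, the product equals $\operatorname{ch}N(\mu-1)+\operatorname{ch}N(\mu+1)$, and in $\mathcal{N}_5$ the multiplicities of the $N(\nu)$ in any object are read off from its character (the $\operatorname{ch}N(\nu)$ are triangular with respect to lowest weight). This simultaneously yields transitivity (any $N(\mu)$ and $N(\mu')$ with $\mu'\in\mu+\mathbb{Z}$ are linked by iterated $L(1)$) and shows that $[L(1)]$ is the bi-infinite tridiagonal matrix of Formula~\eqref{eq-s5.1-1}, i.e.\ the diagram is $A_\infty^\infty$.

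The heart of the argument, and the step I expect to be the \emph{main obstacle}, is simple transitivity, because $\mathcal{N}_5$ is not semi-simple and so the easy criterion used for $\mathcal{N}_1,\dots,\mathcal{N}_4$ is unavailable. Let $\mathcal{I}$ be a nonzero $\mathscr{C}$-invariant ideal. Composing with summand inclusions and projections, $\mathcal{I}$ contains a nonzero morphism between indecomposables, hence a nonzero scalar multiple of some inclusion $\iota_{\mu,\nu}$ with $\mu=\nu+2k$, $k\ge0$. The key computation is to apply $L(1)$ to $\iota_{\mu,\nu}$ and extract the component from the summand $N(\mu-1)$ of $L(1)\otimes N(\mu)$ to the summand $N(\nu+1)$ of $L(1)\otimes N(\nu)$. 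Using the explicit generators $b_{-1}\otimes v_\mu$ and $b_{1}\otimes v_\mu$ for the two summands together with the identity
\[
e^{m}(b_{-1}\otimes v_\mu)=b_{-1}\otimes v_{\mu+2m}+m\,b_{1}\otimes v_{\mu+2m-2},
\]
this component works out to $-k\,\iota_{\mu-1,\nu+1}$, whose gap $(\mu-1)-(\nu+1)=2k-2$ is strictly smaller. Since $k\ge 1$ the scalar is nonzero, so $\iota_{\mu-1,\nu+1}\in\mathcal{I}$.

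Iterating this gap-reduction $k$ times produces a nonzero multiple of an identity morphism $\operatorname{id}_{N(\nu+k)}$ in $\mathcal{I}$. Finally, applying $L(1)$ to an identity and splitting off summands propagates identities to $N(\nu+k\pm 1)$, hence to every $N(\beta)$ with $\beta\in\lambda+\mathbb{Z}$; thus $\mathcal{I}$ contains all identities and equals the whole ideal of morphisms. Therefore $\mathcal{N}_5$ has no proper nonzero $\mathscr{C}$-invariant ideal, and, being transitive of type $A_\infty^\infty$, it is simple transitive.
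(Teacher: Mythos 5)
Your proposal is correct and follows essentially the same route as the paper: the decisive step in both is to apply $F_1$ to the inclusion $N(\nu+2k)\hookrightarrow N(\nu)$ and use the identity $e^{k}(b_{-1}\otimes w_\nu)=b_{-1}\otimes w_{\nu+2k}+k\,b_{1}\otimes w_{\nu+2k-2}$ to extract a nonzero multiple ($-k$ times) of the inclusion $N(\nu+2k-1)\hookrightarrow N(\nu+1)$, reducing the gap by $2$ until an identity morphism is reached. Your additional details (the one-dimensionality of the Hom-spaces, the character argument for $L(1)\otimes N(\mu)\cong N(\mu-1)\oplus N(\mu+1)$, and the final propagation of identities) are correct elaborations of steps the paper treats as routine.
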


\begin{proof}
It is easy to check that 
\begin{displaymath}
(\mathrm{Res}^{\mathfrak{sl}_2}_{\mathfrak{b}}(L(1)))\otimes_{\mathbb{C}}N(\mu)
\cong N(\mu+1)\oplus N(\mu-1),
\end{displaymath}
from which it follows that $\mathcal{N}_5$ is a transitive 
$\mathscr{C}$-module category of type $A_\infty^\infty$.

Any radical morphism in $\mathcal{N}_5$ is a linear combination of some
morphisms of the form $\psi(\mu,k):=\big(N(\mu+2k)\hookrightarrow N(\mu)\big)$,
for $k\in\mathbb{Z}_{>0}$. Let $\{v_\nu\,:\, \nu\in\{\mu+2k,\mu+2k+2,\dots\}\}$
be the standard basis of $N(\mu+2k)$. Let 
$\{w_\nu\,:\, \nu\in\{\mu,\mu+2,\dots\}\}$
be the standard basis of $N(\mu)$. Let $\{e_{-1},e_1\}$ be the standard
basis of $L(1)$.

Applying $F_1$, we get an embedding
of the module $N(\mu+2k+1)\oplus N(\mu+2k-1)$ into 
$N(\mu+1)\oplus N(\mu-1)$. The summand $N(\mu+2k-1)$ is generated
by $e_{-1}\otimes v_{\mu+2k}$ and $F_1(\psi(\mu,k))$ maps the latter
element to $e_{-1}\otimes w_{\mu+2k}$.

The summand $N(\mu-1)$ is generated by $e_{-1}\otimes w_{\mu}$.
Applying $e^k$ to the latter, outputs the element
\begin{displaymath}
k e_1\otimes w_{\mu+2k-2}+ e_{-1}\otimes w_{\mu+2k}. 
\end{displaymath}
Therefore, projecting from $N(\mu+1)\oplus N(\mu-1)$ onto
$N(\mu+1)$ maps $e_{-1}\otimes w_{\mu+2k}$ to a non-zero element
of $N(\mu+1)$. In other words, the ideal generated by 
$F_1(\psi(\mu,k))$ contains $\psi(\mu+1,k-1)$. Proceeding
inductively, we eventually get the identity morphism 
on $N(\mu+k)$. This implies that the $\mathscr{C}$-stable ideal
generated by any non-zero morphism in $\mathcal{N}_5$ contains
the identity morphism on some non-zero objects. This means that
$\mathcal{N}_5$ is simple transitive and completes the proof.
\end{proof}

It is easy to check that, for any $i\in\mathbb{Z}_{\geq 0}$, the 
$\mathfrak{b}$-module $F_i(Q(\lambda))$ is isomorphic to the
quotient $N(\lambda-i)/N(\lambda+2+i)$, we denote the
latter module by $Q(\lambda,i)$. 
If we consider the additive closure $\mathcal{N}_6$
of all $Q(\lambda,i)$, for a fixed $\lambda$ and for
all $i\in\mathbb{Z}_{\geq 0}$, then it has the natural
structure of a $\mathscr{C}$-module category.

\begin{proposition}\label{lem-s5.87-3}
The  $\mathscr{C}$-module category $\mathcal{N}_6$ is 
simple transitive of type $A_\infty$.
\end{proposition}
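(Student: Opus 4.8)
The plan is to follow the same strategy used for $\mathcal{M}_1$ and $\mathcal{M}_2$ in Proposition~\ref{prop-s4.2-1}: show that $\mathcal{N}_6$ is transitive of type $A_\infty$ and that its underlying category is semi-simple, whence simple transitivity is automatic. The two inputs I would rely on are the isomorphism $F_i(Q(\lambda))\cong Q(\lambda,i)$ recorded just before the statement, and the fact that the $\mathscr{C}$-action is monoidal, so that $F_1(F_i(M))\cong(L(1)\otimes L(i))(M)$ for every object $M$.

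First I would pin down the indecomposables and the action matrix. Since $Q(\lambda,i)=N(\lambda-i)/N(\lambda+2+i)$ has dimension $i+1$, the objects $Q(\lambda,i)$ are pairwise non-isomorphic as $i$ ranges over $\mathbb{Z}_{\geq 0}$; each is indecomposable because its $h$-weights are multiplicity free and $e$ acts as a single Jordan block, forcing $\mathrm{End}(Q(\lambda,i))=\mathbb{C}$. Combining $F_i(Q(\lambda))\cong Q(\lambda,i)$ with the Clebsch--Gordan rule of Subsection~\ref{s2.2} and monoidality gives $F_1(Q(\lambda,0))\cong Q(\lambda,1)$ and $F_1(Q(\lambda,i))\cong Q(\lambda,i-1)\oplus Q(\lambda,i+1)$ for $i\geq 1$. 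Reading off $[F_1]$ in the basis $Q(\lambda,0),Q(\lambda,1),\dots$ yields exactly Formula~\eqref{eq-s4.2-2}, so $\mathcal{N}_6$ is of type $A_\infty$; as the associated graph is strongly connected, $\mathcal{N}_6$ is transitive.

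The heart of the argument is to show that $\mathcal{N}_6$ is semi-simple, i.e. that $\mathrm{Hom}(Q(\lambda,i),Q(\lambda,j))=\delta_{i,j}\mathbb{C}$. Each $Q(\lambda,i)$ is cyclic, generated by its lowest weight vector $v_{\lambda-i}$, so any morphism to $Q(\lambda,j)$ is determined by the image of $v_{\lambda-i}$; that image must be a vector of weight $\lambda-i$ annihilated by $e^{i+1}$. The weight-$(\lambda-i)$ space of $Q(\lambda,j)$ is nonzero only when $i\leq j$ and $i\equiv j\pmod 2$, in which case it is one-dimensional, and $e^{i+1}$ sends its generator to the weight-$(\lambda+i+2)$ vector, which is nonzero in $Q(\lambda,j)$ unless $\lambda+i+2>\lambda+j$, i.e. unless $j\leq i+1$. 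Together with $i\leq j$ and the parity constraint this forces $j=i$, and then the morphism is a scalar. I would stress that this is the genuinely surprising point: although the ambient category $\mathcal{N}$ is not semi-simple and each $Q(\lambda,i)$ is a uniserial $\mathfrak{b}$-module with $i+1$ composition factors, there are no nonzero morphisms between non-isomorphic indecomposables of $\mathcal{N}_6$, so $\mathcal{N}_6$ is semi-simple \emph{as a category}.

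Finally, with $\mathcal{N}_6$ transitive and semi-simple, simple transitivity follows exactly as in the proof of Proposition~\ref{prop-s4.2-1}: any nonzero $\mathscr{C}$-invariant ideal contains a nonzero morphism, hence (all cross-hom spaces vanishing and all endomorphism rings being $\mathbb{C}$) a nonzero scalar multiple of some $\mathrm{id}_{Q(\lambda,i)}$, and then $\mathscr{C}$-invariance together with transitivity propagates this identity to every object. The main obstacle is the hom computation of the previous paragraph, in particular correctly tracking both the parity condition and the annihilation condition $e^{i+1}(-)=0$; should that computation prove delicate, a viable fallback is to imitate the ideal-chasing argument of Proposition~\ref{lem-s5.87-2}, applying $F_1$ to a radical morphism and projecting to extract an identity, but the semi-simplicity route is cleaner here.
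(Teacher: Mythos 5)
Your proposal is correct and follows essentially the same route as the paper: establish transitivity and the $A_\infty$ action matrix from $F_i(Q(\lambda))\cong Q(\lambda,i)$, reduce simple transitivity to the semi-simplicity statement $\mathrm{Hom}_{\mathfrak{b}}(Q(\lambda,i),Q(\lambda,j))=\delta_{i,j}\mathbb{C}$, and verify the latter directly. The only cosmetic difference is that you compute the hom-spaces via the cyclic generator $v_{\lambda-i}$ and the condition $e^{i+1}(-)=0$, while the paper argues with the composition factors of the uniserial modules (the top of $Q(\lambda,i)$ having nowhere to go when $i>j$, and the socle of $Q(\lambda,j)$ being uncovered when $i<j$); both computations are correct and equivalent.
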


\begin{proof}
It is straightforward to check that the 
$\mathfrak{b}$-module $F_1(Q(\lambda,i))$
is isomorphic to $Q(\lambda,i-1)\oplus Q(\lambda,i+1)$,
for $i>0$. This implies that $\mathcal{N}_6$ is 
transitive of type $A_\infty$. Simplicity 
is equivalent to the semi-simplicity of 
the underlying category of 
$\mathcal{N}_6$, that is, the equality
\begin{displaymath}
\mathrm{Hom}_\mathfrak{b}(Q(\lambda,i),
Q(\lambda,j))=\delta_{i,j}\mathbb{C}.
\end{displaymath}
Recall that $Q(\lambda,k)$ is a uniserial module 
with top $Q(\lambda-k)$, socle $Q(\lambda+k)$
and other composition subquotients 
$Q(\lambda-k+2)$, $Q(\lambda-4)$,\dots, 
$Q(\lambda+k-2)$.
In particular, since the top of $Q(\lambda,k)$
has multiplicity one, any endomorphism of $Q(\lambda,k)$
is scalar.

If $i>j$, then $[Q(\lambda,j):Q(\lambda-i)]=0$
and hence there are no non-zero homomorphisms from
$Q(\lambda,i)$ to $Q(\lambda,j)$ as the top of
$Q(\lambda,i)$ has nowhere to go.
If $i<j$, then we have $[Q(\lambda,i):Q(\lambda+j)]=0$
and hence there are no non-zero homomorphisms from
$Q(\lambda,i)$ to $Q(\lambda,j)$ as there is nothing
to cover the socle of $Q(\lambda,j)$.
This completes the proof.
\end{proof}

We can alternatively construct $\mathcal{N}_6$ by
first considering $\overline{\mathcal{N}_5}$, where
$Q(\lambda)$ is a simple object and then taking
the $\mathscr{C}$-module subcategory of 
$\overline{\mathcal{N}_5}$ generated by this object 
$Q(\lambda)$. 

The example of the categories $\mathcal{N}_5$
and $\mathcal{N}_6$
shows a striking difference between the
behaviour of $\mathscr{C}$ and that of
finitary monoidal categories
(more precisely, the fiat bi- and 2-categories
in the sense of \cite{MM1}).
In the latter case, starting from a simple 
object of the abelianization of a simple
transitive $2$-representation and applying
certain elements of $\mathscr{C}$ will output
projective objects, see \cite[Lemma~12]{MM5}
and, more generally, \cite[Theorem~2]{KMMZ}.
In our case, none of the $Q(\lambda,i)$
is projective in $\overline{\mathcal{N}_5}$.
The major contributor to this behaviour is the
failure of \cite[Proposition~18]{KM16} in the 
non-finitary context.

The $\mathscr{C}$-module category $\mathcal{N}_6$
does not depend on $\lambda$, up to equivalence.

\section{Realization of type  $C_\infty$}\label{s7}

\subsection{First realization}\label{s7.1}

Consider the setup of Subsection~\ref{s4.2}.
Let $\mathfrak{g}=\mathfrak{sl}_2$ with
$\varphi$ being the identity. Consider the BGG category
$\mathcal{O}$ for $\mathfrak{g}$ and the 
projective-injective modules $P(\lambda)\in\mathcal{O}$,
for $\lambda\in\{-1,-2,\dots\}$.
Let $\mathcal{K}_1$ be the additive closure of these
modules. Then $\mathcal{K}_1$ is exactly the category of 
integral projective injective modules in $\mathcal{O}$.
Consequently, it is closed with respect to tensoring with
finite dimensional $\mathfrak{sl}_2$-modules and hence
is a $\mathscr{C}$-module category.

From Formulae~\eqref{eq-s4.2-3}, we have
\begin{equation}\label{eq-s7.1-1}
[L(1)]=
\left(
\begin{array}{ccccc}
0&2&0&0&\dots\\
1&0&1&0&\dots\\
0&1&0&1&\dots\\
0&0&1&0&\dots\\
\vdots&\vdots&\vdots&\vdots&\ddots
\end{array}
\right)
\end{equation}

\subsection{Second realization}\label{s7.2}

Let $\mathfrak{g}=\mathfrak{sl}_2$ with
$\varphi$ being the identity. 
For a fixed non-zero $\xi\in\mathbb{C}$, let 
$M(\xi)$ be the $\mathfrak{g}$-module defined as the quotient of
$U(\mathfrak{g})$ by the left ideal generated by
$\mathtt{c}$ and $e-\xi$. Then, the module $M(\xi)$ 
is a simple $\mathfrak{g}$-module that appeared first 
in \cite{AP}. The module $M(\xi)$ is a Whittaker module in
the sense of \cite{Ko}. 

Set $X_{-1}:=M(\xi)$ and, for $i\in\{1,2,\dots\}$,
define $X_{-1-i}$ as the kernel of the central
element $(\mathtt{c}+i^2)^2$ acting on 
$L(i)\otimes_{\mathbb{C}}M(\xi)$. Then,
by \cite[Theorem~5.1]{MiSo}
(see also \cite[Theorem~67]{MS1} 
and \cite[Corollary~20]{MMM})
all these $X_{-1-i}$ are indecomposable 
(in fact, have length $2$ with isomorphic subquotients)
and the  additive closure $\mathcal{K}_2=\mathcal{K}_2(\xi)$ of all 
$X_j$, with $j\in \{-1,-2,\dots\}$, is closed 
under tensoring with finite dimensional
$\mathfrak{sl}_2$-modules.

\subsection{Statements}\label{s7.7}
We can now summarize the content of this section
in the following proposition:

\begin{proposition}\label{prop-s7.7-1}
Both  $\mathcal{K}_1$ and $\mathcal{K}_2$ are  simple
transitive $\mathscr{C}$-module categories of type $C_\infty$.
\end{proposition}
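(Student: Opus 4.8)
The proposition makes two assertions—that $\mathcal{K}_1$ and $\mathcal{K}_2$ are of type $C_\infty$, and that they are simple transitive—and the plan is to treat $\mathcal{K}_1$ in full detail and then transport the scheme to $\mathcal{K}_2$. The type of $\mathcal{K}_1$ is immediate: Formula~\eqref{eq-s4.2-3} gives the action of $L(1)$ as the matrix in Formula~\eqref{eq-s7.1-1}, which is the adjacency matrix of $C_\infty$, and transitivity follows because the associated action graph is connected, hence strongly connected. The essential new difficulty, compared with Sections~\ref{s4} and~\ref{s5}, is that the underlying category is \emph{not} semi-simple, so one cannot deduce simple transitivity from ``transitive plus semi-simple''. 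The whole weight of the argument therefore lies in showing directly that $\mathcal{K}_1$ has no proper non-zero $\mathscr{C}$-stable ideal.

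First I would record the morphism structure. Since $\lambda\mapsto(\lambda+1)^2$ is injective on $\{-1,-2,\dots\}$, the modules $P(\lambda)$ lie in pairwise distinct blocks of $\mathcal{O}$, so $\operatorname{Hom}(P(\lambda),P(\mu))=0$ for $\lambda\neq\mu$. Moreover $\operatorname{End}(P(-1))=\mathbb{C}$, while for $\lambda\leq-2$ the module $P(\lambda)$ is uniserial of length three with $\operatorname{top}(P(\lambda))=\operatorname{soc}(P(\lambda))=L(\lambda)$ and $\operatorname{End}(P(\lambda))\cong\mathbb{C}[x]/(x^2)$, the radical being spanned by the endomorphism $r_\lambda$ that factors as $P(\lambda)\twoheadrightarrow L(\lambda)\hookrightarrow P(\lambda)$. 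Consequently every morphism of $\mathcal{K}_1$ is block-diagonal, and the $\mathscr{C}$-stable ideal generated by any non-zero morphism contains, after composing with coordinate inclusions and projections, a non-zero element of some $\operatorname{End}(P(\lambda))$, hence either an identity or some $r_\lambda$. Thus simple transitivity reduces to the single claim that the $\mathscr{C}$-stable ideal $\mathcal{I}$ generated by any $r_\lambda$ contains an identity morphism.

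The key step is to push $r_\lambda$ through $F_1=L(1)\otimes_{\mathbb{C}}-$. As $\lambda\leq-2$ is anti-dominant, $L(\lambda)=\Delta(\lambda)$ is a Verma module, and the standard two-step Verma filtration of $L(1)\otimes\Delta(\lambda)$ splits because its subquotients $\Delta(\lambda+1)$ and $\Delta(\lambda-1)$ lie in the distinct blocks $(\lambda+2)^2$ and $\lambda^2$; thus $F_1L(\lambda)\cong\Delta(\lambda+1)\oplus\Delta(\lambda-1)$. Writing $r_\lambda=\iota\pi$ and applying the exact functor $F_1$, I then restrict to the block of $P(\lambda+1)$: here $F_1(\pi)$ is a non-zero surjection onto $\Delta(\lambda+1)$ and $F_1(\iota)$ a non-zero injection out of it. When $\lambda\leq-3$ we have $\Delta(\lambda+1)=L(\lambda+1)$, the block component of $F_1P(\lambda)$ is the single summand $P(\lambda+1)$, and the composite is a non-zero multiple of $r_{\lambda+1}$, so $\mathcal{I}\ni r_{\lambda+1}$; when $\lambda=-2$ the block component of $F_1P(-2)$ is $P(-1)^{\oplus2}$ and that of $F_1L(-2)$ is $\Delta(-1)=L(-1)=P(-1)$, so the resulting non-zero endomorphism of $P(-1)^{\oplus2}$, together with $\operatorname{End}(P(-1))=\mathbb{C}$, places $\operatorname{id}_{P(-1)}$ in $\mathcal{I}$ after composing with coordinate projections. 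An induction moving $\lambda$ towards $-1$ therefore terminates with $\operatorname{id}_{P(-1)}\in\mathcal{I}$, and by transitivity a single identity spreads to all identities, so $\mathcal{I}$ is the whole category.

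For $\mathcal{K}_2$ the same scheme applies. Using \cite{MiSo,MS1,MMM} one first checks that the action of $F_1$ on the $X_j$ is again given by Formula~\eqref{eq-s7.1-1}, identifying the type as $C_\infty$ and yielding transitivity; the modules $X_{-1-i}$ occupy pairwise distinct central-character blocks (the generalized $\mathtt{c}$-eigenvalues being distinct), so off-diagonal Hom-spaces vanish, each $X_{-1-i}$ with $i\geq1$ has length two with isomorphic subquotients and hence $\operatorname{End}(X_{-1-i})\cong\mathbb{C}[x]/(x^2)$ with radical endomorphism factoring through the common simple subquotient, while $\operatorname{End}(X_{-1})=\mathbb{C}$. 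The identical reduction and the same $F_1$-induction then drive the ideal generated by any radical endomorphism up to $\operatorname{id}_{X_{-1}}$. I expect the genuine obstacle in both cases to be exactly this non-semi-simple core, namely controlling the radical endomorphism under $F_1$; for $\mathcal{K}_1$ it is resolved cleanly by the block-splitting of $L(1)\otimes\Delta(\lambda)$, and for $\mathcal{K}_2$ the extra work is extracting the precise submodule structure and tensor-product decompositions from the cited references.
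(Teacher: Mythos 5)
Your argument for $\mathcal{K}_1$ is correct and is essentially the paper's: both proofs reduce to a non-zero radical endomorphism of some $P(\lambda)$ with $\lambda\le -2$, apply $L(1)\otimes_{\mathbb{C}}{}_-$ and project to the block of central character $(\lambda+2)^2$ to produce a non-zero radical endomorphism of $P(\lambda+1)$, and induct until reaching the semi-simple object $P(-1)^{\oplus 2}=L(-1)^{\oplus 2}$, where the identity on $P(-1)$ appears. You supply more detail than the paper at the one point it leaves implicit, namely why the block component of $F_1(\psi)$ is non-zero: your factorization $r_\lambda=\iota\pi$ through $L(\lambda)=\Delta(\lambda)$ together with the block-splitting $F_1\Delta(\lambda)\cong\Delta(\lambda+1)\oplus\Delta(\lambda-1)$ is exactly the right justification. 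The only genuine divergence is the treatment of $\mathcal{K}_2$: the paper simply transports simple transitivity from $\mathcal{K}_1$ through the equivalence of categories provided by \cite[Theorem~5.1]{MiSo}, which makes the second half of the statement a one-line corollary, whereas you propose to re-run the block-by-block induction directly on the modules $X_{-1-i}$. Your route is viable but requires extracting the precise submodule structure of the $X_{-1-i}$ and the block decompositions of $L(1)\otimes_{\mathbb{C}}X_{-1-i}$ from \cite{MiSo,MS1,MMM}, details you acknowledge but do not carry out; the paper's use of the equivalence avoids this entirely. Neither route has a gap, but the paper's is the more economical for $\mathcal{K}_2$.
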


\begin{proof}
That  $\mathcal{K}_1$ has type $C_\infty$, 
follows from Formula~\eqref{eq-s7.1-1}.
If $\mathcal{K}_1$ is not simple transitive,
then there exists a non-trivial endomorphism $\psi$
of some $P(\lambda)$, for $\lambda\in\{-2,-3,\dots\}$,
such that  the $\mathscr{C}$-stable ideal $\mathcal{I}$ generated
by $\psi$ does not contain any identity morphism
of a non-zero object in $\mathcal{K}_1$.

If $\lambda=-2$, then, tensoring with $L(1)$ and
taking the kernel of $\mathtt{c}$ gives a nilpotent,
but non-zero endomorphism of $L(-1)\oplus L(-1)$.
As $\mathrm{add}(L(-1))$ is semi-simple, it follows
that $\mathcal{I}$ contains the identity on $L(-1)$,
a contradiction.

If $\lambda<-2$, then, tensoring with $L(1)$ and
taking the kernel of $(\mathtt{c}-(\lambda+2)^2)^2$
maps $\psi$ to a non-zero endomorphism of $P(\lambda+1)$
which must thus belong to $\mathcal{I}$. Proceeding
inductively, we will eventually come to the 
situation described in the previous paragraph.
This proves that $\mathcal{K}_1$ is simple 
transitive.

For $\mathcal{K}_2$, the claim follows from 
the claim for $\mathcal{K}_1$ and the equivalence
given by \cite[Theorem~5.1]{MiSo}.
\end{proof}

\begin{remark}\label{rem-s7.7-2}
{\em 
The category $\mathcal{K}_1$ is not semi-simple 
as the endomorphism algebras of the projective modules
$P(\lambda)$ are non-trivial, if $\lambda\in\{-2,-3,\dots\}$
(in fact, all these endomorphism algebras are isomorphic
to the dual numbers). This gives another example of a
non-semi-simple simple transitive module category over
a semi-simple rigid monoidal category. 

In fact, semi-simple simple transitive 
$\mathscr{C}$-module categories of type $C_\infty$
(or $B_\infty$) do not exist.
Indeed, in each such category simple and projective
objects coincide and hence, by \cite[Lemma~8]{AM},
the matrix $[F_1]$ should be symmetric.
In type $B_\infty$, we will have a stronger negative result
in Proposition~\ref{prop-s6.2-1}.
}
\end{remark}

\begin{proposition}\label{prop-s7.7-3}
All categories $\mathcal{K}_1$ and $\mathcal{K}_2(\xi)$, 
for $\xi\neq 0$, 
are equivalent as $\mathscr{C}$-module categories.
\end{proposition}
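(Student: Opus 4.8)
The plan is to prove that all the categories $\mathcal{K}_1$ and $\mathcal{K}_2(\xi)$ are equivalent by showing that each one is equivalent to $\mathcal{K}_1$, since equivalence is transitive. The key structural fact I would exploit is that these are all simple transitive $\mathscr{C}$-module categories of the same combinatorial type $C_\infty$, realized by projective functors acting on subquotients of $\mathfrak{sl}_2$-modules with the same central character behaviour. First I would recall that, by the construction in Subsection~\ref{s7.2}, the equivalence between $\mathcal{K}_2(\xi)$ and $\mathcal{K}_1$ is essentially already provided by \cite[Theorem~5.1]{MiSo}, which is precisely the Miličić–Soergel equivalence between the category of Whittaker modules with a fixed non-singular character and the projective-injective part of category $\mathcal{O}$. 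The content to verify is that this equivalence is not merely an equivalence of categories but an equivalence of \emph{$\mathscr{C}$-module} categories, i.e.\ that it intertwines the two actions of $\mathscr{C}$ given by tensoring with finite dimensional modules.

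The main technical step, then, is to check $\mathscr{C}$-equivariance of the Miličić–Soergel functor. The natural way to do this is to observe that tensoring with a finite dimensional module is, on both sides, realized by the \emph{same} underlying projective functors on the category $\mathcal{Z}$ of modules with locally finite $Z(\mathfrak{sl}_2)$-action, as classified in \cite[Theorem~3.3]{BG} and recalled in Subsection~\ref{s2.6}. Both $\mathcal{O}$ and the category of Whittaker modules for a fixed character are blocks inside $\mathcal{Z}$, and the functor of tensoring with $L(i)$ commutes with the Miličić–Soergel equivalence because that equivalence is itself induced by, or compatible with, the action of projective functors. Concretely, I would verify that the equivalence sends $X_{-1-i}$ to $P(-1-i)$ (matching the indexing by the same vertex of the $C_\infty$ diagram) and that under this correspondence the decompositions in Formula~\eqref{eq-s7.1-1} agree on both sides; since the split Grothendieck modules are determined by the same matrix $[L(1)]$, the multiplicities match automatically, and one only needs the functorial compatibility at the level of morphisms.

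The independence of $\mathcal{K}_2(\xi)$ from the parameter $\xi\in\mathbb{C}^*$ requires a separate, though easier, argument. Here I would note that all non-zero values of $\xi$ give Whittaker modules $M(\xi)$ associated to the same (regular, non-degenerate) character up to the action of the torus, and that the indecomposable objects $X_{-1-i}$ have the same central-character and length-two structure regardless of $\xi$; the equivalence $\mathcal{K}_2(\xi)\simeq\mathcal{K}_2(\xi')$ can be produced either directly from a rescaling automorphism of $\mathfrak{sl}_2$ fixing $\mathtt{c}$ and sending $e-\xi$ to $e-\xi'$, or simply deduced from the fact that both are equivalent to $\mathcal{K}_1$ via \cite[Theorem~5.1]{MiSo}. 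The hard part of the whole argument is the first point, namely upgrading the known categorical equivalence to a $\mathscr{C}$-module equivalence; once the naturality of the Miličić–Soergel functor with respect to projective functors is established, the matching of types via Formula~\eqref{eq-s7.1-1} and the identification of indecomposables make the rest routine.
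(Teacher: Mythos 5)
Your proposal follows essentially the same route as the paper: the equivalence $\mathcal{K}_1\simeq\mathcal{K}_2(\xi)$ is quoted from \cite[Theorem~5.1]{MiSo}, and the independence of $\xi$ is handled either by transitivity through $\mathcal{K}_1$ or by the rescaling automorphism of $\mathfrak{sl}_2$ fixing $h$ and sending $e\mapsto ce$, $f\mapsto c^{-1}f$, which is exactly the paper's ``more elementary argument.'' Your additional care in checking that the Mili\v{c}i\'c--Soergel equivalence is compatible with the $\mathscr{C}$-action (via its compatibility with projective functors) addresses a point the paper leaves implicit, but it is elaboration of the same argument rather than a different one.
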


\begin{proof}
That $\mathcal{K}_1$
is equivalent, as a $\mathscr{C}$-module category,
to $\mathcal{K}_2(\xi)$, for any $\xi\neq 0$, follows
from \cite[Theorem~5.1]{MiSo}.

Here is a more elementary argument that all
$\mathcal{K}_2(\xi)$ are equivalent.
For $c\in\mathbb{C}^*$, we have an automorphism
$\varphi_c$ of $\mathfrak{sl}_2$ which keeps
$h$, sends $e$ to $ce$ and also sends $f$ to $\frac{1}{c}f$.
Directly from the definitions, we see that twisting
$M(\xi)$ by this automorphism outputs 
$M(c\xi)$ (or $M(c^{-1}\xi)$, depending on the twisting
conventions). Since such twisting preserves $\mathscr{C}$,
it gives rise to  an equivalence between $\mathcal{K}_2(\xi)$
and $\mathcal{K}_2(c\xi)$. 
\end{proof}

Using the ideas of \cite[Theorem~5.1]{MiSo} and,
more generally, the results in \cite[Theorem~67]{MS1} 
and \cite[Corollary~20]{MMM}, we can generalize
the example of the $\mathscr{C}$-module category
$\mathcal{K}_2$ by taking as $M$ any simple 
$\mathfrak{g}$-module which satisfies the
condition $\mathtt{c}M=0$.

\section{Realization of  type $D_\infty$}\label{s8}

\subsection{The realization}\label{s8.1}

Consider the semi-direct product
$\mathfrak{g}=\mathfrak{sl}_2\ltimes L(4)$, where
$L(4)$ is an abelian ideal and let
$\varphi:\mathfrak{g}\tto \mathfrak{sl}_2$
be the natural projection, see \cite{MMr2}.
Let $\{v_i\,:\,i\in\{0,\pm 2,\pm 4\}\}$ be the
standard basis of $L(4)$, that is, we have
$h\cdot v_i=iv_i$, then  $e\cdot v_4=0$ and  
$e\cdot v_{4-2i}=(5-i)v_{6-2i}$, for $i=1,2,3,4$.
The elements 
\begin{gather*}
C_2:=v_0^2-3v_{-2}v_2+12 v_{-4}v_4
\qquad\text{ and }\\
C_3:=v_0^3-\frac{9}{2}v_{-2}v_0v_2
+\frac{27}{2}v_{-2}^2v_4
+\frac{27}{2}v_{-4}v_2^2
-36v_{-4}v_0v_4
\end{gather*}
are central in $U(\mathfrak{g})$.
In \cite[Theorem~68]{MMr}, see also \cite[Theorem~12]{MMr2},
for every $\mu\in\mathbb{C}^*$,
one can find a construction of  simple $\mathfrak{g}$-modules
$V'(0,\mu)$, $V'(2,\mu)$ and $V(n,\mu)$, for 
$n\in\mathbb{Z}_{\geq 1}$,
such that
\begin{itemize}
\item $C_2$ and $C_3$ act on each such module as $\mu^2$
and $\mu^3$, respectively;
\item the restriction of each of these modules to
$\mathfrak{sl}_2$ is a direct sum of simple finite
dimensional modules with finite multiplicities.
\end{itemize}
By \cite[Lemma~14]{MMr2}, see also \cite[Remark~75]{MMr}, we have:
\begin{gather*}
L(1)\otimes_{\mathbb{C}} V(n,\mu)\cong
V(n-1,\mu)\oplus V(n+1,\mu),\text{ if } n>1;\\
L(1)\otimes_{\mathbb{C}} V(1,\mu)\cong
V'(0,\mu)\oplus V'(2,\mu)\oplus V(2,\mu);\\
L(1)\otimes_{\mathbb{C}}V'(0,\mu)\cong
V(1,\mu);\\
L(1)\otimes_{\mathbb{C}}V'(2,\mu)\cong
V(1,\mu).
\end{gather*}

This means that, with respect to the basis given by
the order 
\begin{displaymath}
V'(0,\mu),V'(2,\mu),V(1,\mu),V(2,\mu),V(3,\mu),\dots,
\end{displaymath}
we have:
\begin{equation}\label{eq-s8.1-1}
[L(1)]=
\left(
\begin{array}{cccccc}
0&0&1&0&0&\dots\\
0&0&1&0&0&\dots\\
1&1&0&1&0&\dots\\
0&0&1&0&1&\dots\\
0&0&0&1&0&\dots\\
\vdots&\vdots&\vdots&\vdots&\ddots
\end{array}
\right)
\end{equation}

We denote by $\mathcal{Y}'$ the additive closure of all these
$V'(0,\mu)$, $V'(2,\mu)$, $V(1,\mu)$, $V(2,\mu)$, $V(3,\mu),\dots$.

\subsection{Statement}\label{s8.7}
We have the following proposition:

\begin{proposition}\label{prop-s8.7-1}
The category $\mathcal{Y}'$ is a simple
transitive $\mathscr{C}$-module category of type $D_\infty$.
\end{proposition}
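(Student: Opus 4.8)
The plan is to verify the two defining properties from Subsection~\ref{s2.3}: that $\mathcal{Y}'$ is \emph{transitive} and that it is \emph{simple} (has no non-trivial $\mathscr{C}$-stable ideals), and separately that its combinatorial type is $D_\infty$. The type is the easiest part: Formula~\eqref{eq-s8.1-1} is precisely the adjacency matrix $\widetilde{C}$ of the graph with a central vertex (indexed by $V(1,\mu)$) attached to two pendant vertices (indexed by $V'(0,\mu)$ and $V'(2,\mu)$) and then continuing in an infinite tail $V(1,\mu)-V(2,\mu)-V(3,\mu)-\cdots$. This is exactly the diagram $D_\infty$ from Subsection~\ref{s3.4}. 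By Corollary~\ref{cor-s3.5-1}, once I know the category is transitive, the fact that $[L(1)]$ equals this specific matrix pins down the type, so I would simply quote \eqref{eq-s8.1-1}.

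For transitivity, I would invoke the criterion from Subsection~\ref{s2.3}: $\mathcal{Y}'$ is transitive iff its action graph $\Lambda_{\mathcal{Y}'}$ is strongly connected. Reading the tensor-product formulae directly above \eqref{eq-s8.1-1}, one moves from $V(n,\mu)$ up and down the tail via $L(1)$, from $V(1,\mu)$ down to both $V'(0,\mu)$ and $V'(2,\mu)$, and back up from each of the latter two to $V(1,\mu)$. Hence from any indecomposable object one can reach any other by repeatedly applying $L(1)=F_1$ and decomposing, so the graph is strongly connected and $\mathcal{Y}'$ is transitive.

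The main obstacle is simple transitivity, i.e.\ showing there is no proper non-zero $\mathscr{C}$-stable ideal. I anticipate the argument will parallel the proof of Proposition~\ref{prop-s7.7-1} for $\mathcal{K}_1$: I would start from an arbitrary non-zero (necessarily radical, since these are simple $\mathfrak{g}$-modules with scalar endomorphisms by Schur) morphism $\psi$ generating a $\mathscr{C}$-stable ideal $\mathcal{I}$, and show $\mathcal{I}$ must contain an identity morphism. Because the restriction of each $V(n,\mu)$, $V'(0,\mu)$, $V'(2,\mu)$ to $\mathfrak{sl}_2$ is a multiplicity-finite direct sum of finite-dimensional simples on which $Z(\mathfrak{g})$ acts by the fixed scalars $\mu^2,\mu^3$, I can use tensoring with $L(1)$ together with projection onto a suitable central generalized eigenspace (for $C_2$, $C_3$, or the $\mathfrak{sl}_2$-Casimir) to transport $\psi$ along the action graph, reducing toward the branch point at $V(1,\mu)$. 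The delicate step is the behaviour at the two pendant vertices: I would need to check that a non-zero endomorphism cannot be trapped on the branch, specifically that applying $F_1$ and projecting appropriately produces a non-zero endomorphism of $V(1,\mu)$, whose endomorphism algebra is $\mathbb{C}$ by simplicity, thereby forcing the identity on $V(1,\mu)$ into $\mathcal{I}$ and, by transitivity, the identities on all objects.

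Once the identity on $V(1,\mu)$ lies in $\mathcal{I}$, transitivity propagates it to every indecomposable summand, so $\mathcal{I}$ is the whole category and no proper non-zero $\mathscr{C}$-stable ideal exists; combined with transitivity this gives simple transitivity, and with the type computation the proposition follows. I expect the genuinely non-routine point to be establishing that the endomorphism $\psi$ does not die under the functor-and-projection process at the branch vertex; this likely requires an explicit examination of how $F_1$ interacts with the two maps $V'(0,\mu)\to V(1,\mu)$ and $V'(2,\mu)\to V(1,\mu)$, analogous to the $\lambda=-2$ case in the proof of Proposition~\ref{prop-s7.7-1}, and may lean on the detailed module structure from \cite{MMr2}.
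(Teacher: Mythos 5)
Your identification of the type and your transitivity argument match the paper. For simple transitivity, however, you have over-engineered the argument and in doing so introduced a small inconsistency: you describe the generating morphism $\psi$ as ``necessarily radical, since these are simple $\mathfrak{g}$-modules with scalar endomorphisms by Schur,'' but that very observation shows the opposite --- the indecomposable objects of $\mathcal{Y}'$ are pairwise non-isomorphic simple $\mathfrak{g}$-modules, so all hom-spaces between distinct indecomposables vanish and all endomorphism algebras are $\mathbb{C}$; hence the underlying category is semi-simple and its radical is zero, so there is no non-zero radical morphism to transport anywhere. This is exactly the paper's (one-line) proof: a transitive $\mathscr{C}$-module category whose underlying category is semi-simple is automatically simple transitive, since any non-zero ideal contains a non-zero endomorphism of an indecomposable, which is invertible, and transitivity then propagates the identity to every object. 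The elaborate scheme you anticipate --- tensoring with $L(1)$, projecting onto central generalized eigenspaces, and worrying about the branch point at $V(1,\mu)$ --- is modelled on Proposition~\ref{prop-s7.7-1}, where it is genuinely needed because $\mathcal{K}_1$ is \emph{not} semi-simple (the $P(\lambda)$ have dual-number endomorphism rings); here it is vacuous, and the ``delicate step'' at the pendant vertices that you flag as non-routine does not arise. So your proposal reaches the right conclusion, but you should replace the entire third and fourth paragraphs with the semi-simplicity observation you already have in hand.
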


\begin{proof}
Simple transitivity follows by combining transitivity 
of the action of $\mathscr{C}$ with the semi-simplicity
of the underlying category. The identification of the
type follows from Formula~\eqref{eq-s8.1-1}.
\end{proof}

\subsection{Additional properties}\label{s8.8}

\begin{proposition}\label{prop-s8.8-1}
Let $\mathcal{Y}$ be an admissible  simple transitive 
$\mathscr{C}$-module category of type $D_\infty$.
Then $\mathcal{Y}$ is a semi-simple category.
\end{proposition}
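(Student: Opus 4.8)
The plan is to show that any admissible simple transitive $\mathscr{C}$-module category $\mathcal{Y}$ of type $D_\infty$ must have all radical morphisms vanish, by analyzing how radical morphisms between indecomposables behave under the action of $F_1$ and using the branching structure of the $D_\infty$ diagram. Let me index the indecomposables so that $[F_1]$ is given by Formula~\eqref{eq-s8.1-1}: the two ``fork'' vertices $M_0, M_0'$ (the two leaves attached to the trivalent node), the trivalent node $M_1$, and then the tail $M_2, M_3, \dots$. The strategy mirrors the proof of Proposition~\ref{prop-s4.2-2}: if I can prove that the radical of $\mathcal{Y}$ is $\mathscr{C}$-stable, then simple transitivity forces the radical to be zero, i.e.\ $\mathcal{Y}$ is semi-simple. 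Since $\mathscr{C}$ is generated by $F_1$, it suffices to show the radical is $F_1$-invariant.

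First I would classify, via the matrix $[F_1]$, which pairs of indecomposables $(M_i,M_j)$ can admit a morphism whose image under $F_1$ is potentially non-radical: this happens only when $F_1(M_i)$ and $F_1(M_j)$ share an indecomposable summand, which by the diagram means $i$ and $j$ are adjacent or equal in the $D_\infty$ graph. For a radical endomorphism $\varphi\colon M_i \to M_i$, nilpotency (from local finitarity) together with the fact that $F_1(M_i)$ is multiplicity-free forces $F_1(\varphi)$ to be radical, exactly as in Lemma-style arguments already in the paper. The genuinely new work is the off-diagonal case $\varphi\colon M_i \to M_j$ with $i,j$ adjacent. Here I would pass to the abelianization $\overline{\mathcal{Y}}$, let $N_k$ denote the simple top of $M_k$, and establish a branching rule $F_i(N_j) \cong \bigoplus N_k$ summing over the neighbours-at-distance-$i$ prescribed by $[F_i] = R_i([F_1])$; the tool for this is \cite[Lemma~8]{AM} applied to the semi-simple subquotient category spanned by the $N_k$, just as in Lemma~\ref{lem-s4.2-3}.

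With the branching rule in hand, the argument for an off-diagonal radical morphism $\varphi\colon M_i\to M_j$ runs as follows. The image $\varphi(M_i)$ lies in $\mathrm{Rad}(M_j)$; applying the exact functor $F_1$ to the short exact sequence $0 \to \mathrm{Rad}(M_j) \to M_j \to N_j \to 0$ and checking that $F_1(N_j)$ is the top of $F_1(M_j)$ (which follows because the branching rule makes $F_1(N_j)$ a sum of simple tops appearing with the right multiplicities) yields $F_1(\mathrm{Rad}(M_j)) = \mathrm{Rad}(F_1(M_j))$, whence $F_1(\varphi)$ is again radical. The one subtlety special to $D_\infty$ is the trivalent vertex and the two leaves: near the fork, $F_1(M_1)\cong M_0\oplus M_0'\oplus M_2$ and $F_1(M_0)\cong M_1 \cong F_1(M_0')$, so I must verify the top-preservation and multiplicity bookkeeping carefully at these three special vertices rather than relying on the uniform tail behaviour. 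I expect this local analysis at the fork to be the main obstacle, since it is where the branching multiplicities differ from the $A_\infty$ pattern and where two distinct simple objects $N_0, N_0'$ can interact through $M_1$.

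Once the radical is shown to be $F_1$-stable, hence $\mathscr{C}$-stable, simple transitivity of $\mathcal{Y}$ forces $\mathrm{Rad}(\mathcal{Y}) = 0$, so $\mathcal{Y}$ is semi-simple, completing the proof. I would remark that admissibility is used precisely to guarantee that the abelianization $\overline{\mathcal{Y}}$ is a well-behaved abelian length category into which $\mathcal{Y}$ embeds as the projectives, so that the exactness of $F_1$ and the notion of simple top $N_k$ make sense.
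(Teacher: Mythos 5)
Your overall architecture coincides with the paper's: pass to the abelianization, show that $F_1$ sends simples to semisimple objects, deduce via the short exact sequence $0\to\mathrm{Rad}(Y_j)\to Y_j\to S_j\to 0$ that the radical is $F_1$-stable, and conclude from simple transitivity that the radical vanishes. The diagonal case (nilpotency plus multiplicity-freeness of $F_1(Y_i)$) and the concluding steps are exactly as in the paper.

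However, there is a genuine gap at precisely the point you yourself flag as ``the main obstacle'': you never actually prove the branching rule, i.e.\ that $F_1(N_j)$ is a \emph{direct sum} of simples, and the tool you cite does not deliver it. \cite[Lemma~8]{AM} only controls composition multiplicities (the matrix in the basis of simples is the transpose of the matrix in the basis of projectives); it says nothing about whether the resulting length-three object $F_1(N_2)$, with factors $N_0$, $N_0'$, $N_3$, is split or a nontrivial extension. Moreover, invoking the lemma ``applied to the semi-simple subquotient category spanned by the $N_k$'' is circular: that subcategory is closed under the $\mathscr{C}$-action only once semisimplicity of the $F_1(N_j)$ is known. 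The mechanism of Lemma~\ref{lem-s4.2-3} also does not transfer: in type $A_\infty$ the splitting came for free because $F_i(N_0)$ has length one and one then bootstraps with $F_1F_i\cong F_{i-1}\oplus F_{i+1}$, whereas in type $D_\infty$ the trivalent vertex produces a length-three object with no such shortcut. The paper closes this gap with a self-adjointness argument: for $i\in\{0,1\}$, adjunction gives
$\overline{\mathcal{Y}}(F_1(S_2),S_i)\cong\overline{\mathcal{Y}}(S_2,F_1(S_i))\cong\mathbb{C}$ and
$\overline{\mathcal{Y}}(S_i,F_1(S_2))\cong\overline{\mathcal{Y}}(F_1(S_i),S_2)\cong\mathbb{C}$,
so each $S_i$ occurs in both the top and the socle of $F_1(S_2)$; since it occurs with total multiplicity one, it splits off as a direct summand, forcing $F_1(S_2)\cong S_0\oplus S_1\oplus S_3$, and the same adjunction argument then propagates up the tail by induction. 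Without this (or an equivalent) argument your claim that ``$F_1(N_j)$ is the top of $F_1(M_j)$'' is unsupported, and the identity $F_1(\mathrm{Rad}(M_j))=\mathrm{Rad}(F_1(M_j))$ on which the whole proof rests does not follow. A minor additional slip: two indecomposables can have $F_1$-images sharing a summand exactly when they are equal or at distance two in the diagram (e.g.\ the two leaves, which both map to the trivalent node), not when they are adjacent.
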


\begin{proof}
We need to prove that the radical of $\mathcal{Y}$
is $\mathscr{C}$-invariant. Consider the abelianization
$\overline{\mathcal{Y}}$. Let $Y_0,Y_1,\dots$ be the
list of indecomposables in $\mathcal{Y}$ such that 
$[F_1]$ is given by \eqref{eq-s8.1-1} in the corresponding
basis of the Grothendieck group. These are the indecomposable
projectives in $\overline{\mathcal{Y}}$. Let $S_0,S_1,\dots$
be the corresponding simples in $\overline{\mathcal{Y}}$.
Note that \eqref{eq-s8.1-1} is a symmetric matrix and therefore
the matrix of the action of each $F_i$ in the basis 
of projectives coincides with the matrix of the action 
of this $F_i$ in the basis of simples.

We thus have $F_1(S_0)\cong F_1(S_1)\cong S_2$.
The module $F_1(S_2)$ has length three with 
simple subquotients $S_0$, $S_1$ and $S_3$, all appearing with 
multiplicity one. For $i\in\{0,1\}$, by adjunction,  we have
\begin{displaymath}
\overline{\mathcal{Y}}(F_1(S_2),S_i)\cong 
\overline{\mathcal{Y}}(S_2,F_1(S_i))\cong\mathbb{C} 
\end{displaymath}
and
\begin{displaymath}
\overline{\mathcal{Y}}(S_i,F_1(S_2))\cong 
\overline{\mathcal{Y}}(F_1(S_i),S_2)\cong\mathbb{C}. 
\end{displaymath}
Thus $S_i$ appears both in the socle and in the top
of $F_1(S_2)$,
which implies that $F_1(S_2)\cong S_0\oplus S_1\oplus S_3$.

Next we see that $F_1(S_3)$ has length two with 
simple subquotients $S_2$ and $S_4$, both appearing with 
multiplicity one. By adjunction, 
we have
\begin{displaymath}
\overline{\mathcal{Y}}(F_1(S_3),S_2)\cong 
\overline{\mathcal{Y}}(S_3,F_1(S_2))\cong \mathbb{C} 
\end{displaymath}
and 
\begin{displaymath}
\overline{\mathcal{Y}}(S_2,F_1(S_3))\cong 
\overline{\mathcal{Y}}(F_1(S_2),S_3)\cong \mathbb{C}. 
\end{displaymath}
Therefore $S_2$ appears both in the socle and in the top
of $F_1(S_3)$ implying that we have
$F_1(S_3)\cong S_2\oplus S_4$.

Proceeding inductively, we obtain that
$F_1(S_i)\cong S_{i-1}\oplus S_{i+1}$, for all $i\geq 3$.
Let $\tilde{\mathcal{Y}}$ be the additive closure of all $S_i$,
where $i\geq 0$.
We have just shown that 
$\tilde{\mathcal{Y}}$ is invariant under
the action of $F_1$. Since $F_1$ generates
$\mathscr{C}$, it follows that $\tilde{\mathcal{Y}}$ is
$\mathscr{C}$-invariant. In other words, 
$\tilde{\mathcal{Y}}$ is a simple transitive 
$\mathscr{C}$-module category of type $D_\infty$.

The rest is similar to the proof of
Proposition~\ref{prop-s4.2-2}. We need to show that 
the radical of $\mathcal{Y}$ is $F_1$-invariant.
Let $\varphi:Y_i\to Y_i$ be a radical morphism,
for some $i$. Then $\varphi$ is nilpotent
since $\mathrm{End}_{\mathcal{Y}}(Y_i)$
is finite dimensional. Hence $F_1(\varphi)$
is also nilpotent. Since $F_1(Y_i)$ does not have
isomorphic summands, it follows that 
$F_1(\varphi)$ is a radical morphism.

Let $\varphi:Y_i\to Y_j$ be a non-zero morphism,
for $i\neq j$ (hence $\varphi$ is automatically radical). 
Then $\varphi(Y_i)$ belongs to the
radical of $Y_j$. Applying the exact functor $F_1$
to the short exact sequence
\begin{displaymath}
0\to \mathrm{Rad}(Y_j) \to
Y_j\to S_j\to 0,
\end{displaymath}
we get a short exact sequence
\begin{displaymath}
0\to F_1(\mathrm{Rad}(Y_j)) \to
F_1(Y_j)\to F_1(S_j)\to 0,
\end{displaymath}
in which $F_1(S_j)$ is isomorphic to the top of $F_1(Y_j)$
by the above.
Therefore we have 
\begin{displaymath}
F_1(\mathrm{Rad}(Y_{j}))=
\mathrm{Rad}(F_1(Y_j)).
\end{displaymath}
As $\varphi(Y_i)\subset \mathrm{Rad}(Y_{j})$
and $F_1$ is exact, we obtain that 
$F_1(\varphi(Y_i))$ is a submodule of
$\mathrm{Rad}(F_1(Y_j))$, which means 
that $F_1(\varphi)$ is a radical morphism.
As the radical of $\mathcal{Y}$ is generated
by radical morphisms between indecomposable objects,
it follows that the radical of $\mathcal{Y}$ 
is $\mathscr{C}$-stable and completes the proof.
\end{proof}

\section{Realization of  type $T_\infty$}\label{s9}

\subsection{First realization}\label{s9.1}

Let $\mathfrak{g}=\mathfrak{sl}_2$ with
$\varphi$ being the identity. 
For a fixed non-zero $\xi\in\mathbb{C}^*$
and $\lambda\in\mathbb{Z}+\frac{1}{2}$, let 
$M(\lambda,\xi)$ be the $\mathfrak{g}$-module defined as the quotient of
$U(\mathfrak{g})$ by the left ideal generated by
$\mathtt{c}-(\lambda+1)^2$ and $e-\xi$. Then, the module $M(\lambda,\xi)$ 
is a simple Whittaker $\mathfrak{g}$-module, see \cite{AP,Ko}.
Note that $M(\lambda,\xi)\cong M(-\lambda-2,\xi)$ due to our choice
of $\lambda$.

As $\lambda$ is not an integer, the functor
$L(1)\otimes_{\mathbb{C}}{}_-$ goes from
$\mathcal{Z}_{(\lambda+1)^2}$ to 
$\mathcal{Z}_{\lambda^2}\oplus \mathcal{Z}_{(\lambda+2)^2}$
and is isomorphic to the direct sum of
$\theta_{\lambda,\lambda-1}$ and $\theta_{\lambda,\lambda+1}$.
It follows that $L(1)\otimes_{\mathbb{C}}M(\lambda,\xi)$
is a direct sum of $M(\lambda-1,\xi)$ and $M(\lambda+1,\xi)$.
Denote by $\mathcal{X}_1$ the additive closure of all
$M(\lambda+i,\xi)$, for $i\in\mathbb{Z}$. This coincides with
the additive closure of all $M(-\frac{3}{2}-i,\xi)$, for 
$i\in\mathbb{Z}_{\geq 0}$, the latter now being pair-wise non-isomorphic.

If $\lambda=-\frac{3}{2}$, then $M(\lambda+1,\xi)\cong M(\lambda,\xi)$.
If $\lambda=-\frac{1}{2}$, then $M(\lambda-1,\xi)\cong M(\lambda,\xi)$.
Therefore, we have
\begin{equation}\label{eq-s9.1-1}
[L(1)]=
\left(
\begin{array}{ccccc}
1&1&0&0&\dots\\
1&0&1&0&\dots\\
0&1&0&1&\dots\\
0&0&1&0&\dots\\
\vdots&\vdots&\vdots&\vdots&\ddots
\end{array}
\right)
\end{equation}

\subsection{Second realization}\label{s9.2}

Let $\mathfrak{g}$ be the Schr{\"o}dinger Lie algebra 
as in Subsection~\ref{s5.2}, see \cite{DLMZ},
and $\varphi:\mathfrak{g}\to \mathfrak{sl}_2$
be the natural projection. 
Then, for each $\theta\in\mathbb{C}^*$,
\cite[Theorem~54]{MMr} describes a family of
simple $\mathfrak{g}$-modules  $V(n,\theta)$,
where $n\in\mathbb{Z}$, such that
\begin{itemize}
\item  $z$ acts on $V(n,\theta)$ as $\chi$;
\item when restricted to $\mathfrak{sl}_2$, the module $V(n,\lambda)$
is a direct sum of $L(|n|)$, $L(|n|+1)$ and so on;
\item $V(n,\theta)=V(n',\theta')$ if and only if 
$(n',\theta')=(n,\theta)$ or $(n',\theta')=(-n,\theta)$.
\end{itemize}
By \cite[Proposition~53]{MMr}, we have 
\begin{displaymath}
L(1)\otimes_{\mathbb{C}}V(n,\theta)\cong 
\begin{cases}
V(n-1,\theta)\oplus V(n+1,\theta),& n\neq 0;\\
V(0,\theta)\oplus V(1,\theta),& n= 0.\\
\end{cases}
\end{displaymath}

For a fixed $\theta\in\mathbb{C}^*$, denote by 
$\mathcal{X}_2$ the additive closure of all
$V(n,\theta)$, for $n\in\mathbb{Z}$. By the above,
$\mathcal{X}_2$ is semi-simple and has a natural structure
of a $\mathscr{C}$-module category in which 
$[L(1)]$ is given by \eqref{eq-s9.1-1}.

\subsection{Statement}\label{s9.7}

We can now summarize the content of this section
in the following proposition:

\begin{proposition}\label{prop-s9.7-1}
The categories $\mathcal{X}_1$ and $\mathcal{X}_2$ are simple
transitive $\mathscr{C}$-module categories of type $T_\infty$.
\end{proposition}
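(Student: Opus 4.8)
The plan is to follow exactly the template already used for Proposition~\ref{prop-s5.7-1} and Proposition~\ref{prop-s8.7-1}: reduce simple transitivity to the combination of transitivity and semi-simplicity, and read off the type directly from the explicit matrix of $[L(1)]$. Both categories are built inside $\mathfrak{g}$-mod as additive closures of explicitly described modules, and all the tensor-product decompositions we need have already been recorded in Subsections~\ref{s9.1} and \ref{s9.2}.

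First I would identify the type, which amounts to recognizing the matrix in Formula~\eqref{eq-s9.1-1} as the adjacency matrix $\widetilde{C}$ of the diagram $T_\infty$ of Subsection~\ref{s3.4}: the entry $1$ in position $(1,1)$ records the single loop at the initial vertex, while the remaining off-diagonal $1$'s record the $A_\infty$-chain emanating from it, and this loop is exactly what distinguishes $T_\infty$ from $A_\infty$. What has to be checked is therefore only that the action of $[L(1)]$ on each category really is given by \eqref{eq-s9.1-1}. For $\mathcal{X}_1$ this follows from the decomposition of $L(1)\otimes_\mathbb{C}{}_-$ into the projective functors $\theta_{\lambda,\lambda-1}$ and $\theta_{\lambda,\lambda+1}$ recorded in Subsection~\ref{s9.1}, together with the coincidence $M(-\tfrac{1}{2},\xi)\cong M(-\tfrac{3}{2},\xi)$ which produces the loop at the initial vertex; for $\mathcal{X}_2$ it follows from the tensor-product formulas of Subsection~\ref{s9.2}, where the summand $V(0,\theta)$ appearing in $L(1)\otimes_\mathbb{C} V(0,\theta)$ produces the loop.

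Next I would establish transitivity and semi-simplicity. Because the matrix \eqref{eq-s9.1-1} is symmetric with connected underlying graph, its action graph $\Lambda_{\mathcal{M}}$ is strongly connected, which by the criterion of Subsection~\ref{s2.3} is equivalent to transitivity; concretely, the tensor-product formulas let one pass from any indecomposable to any other by applying suitable $F_i$. For semi-simplicity I would use that both categories are, by construction, additive closures of families of pairwise non-isomorphic \emph{simple} $\mathfrak{g}$-modules: the simple Whittaker modules $M(-\tfrac{3}{2}-i,\xi)$, $i\in\mathbb{Z}_{\geq 0}$, for $\mathcal{X}_1$, and the simple modules $V(n,\theta)$, $n\in\mathbb{Z}_{\geq 0}$ up to the identification $V(n,\theta)\cong V(-n,\theta)$, for $\mathcal{X}_2$. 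Thus semi-simplicity reduces to Schur's lemma: there are no non-zero morphisms between non-isomorphic members and each endomorphism ring is $\mathbb{C}$.

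Finally, I would combine the two properties exactly as before: in a semi-simple transitive module category any non-zero $\mathscr{C}$-invariant ideal contains the identity of some indecomposable, and strong connectivity of $\Lambda_{\mathcal{M}}$ forces it to contain the identity of every indecomposable, so it is the whole category and no non-trivial $\mathscr{C}$-invariant ideal exists. The only genuinely non-routine point, and the step I expect to require the most care, is the semi-simplicity: one must know that the cited modules are honestly simple (this is what \cite{AP,Ko} and \cite[Theorem~54]{MMr} provide) and invoke Schur's lemma in Dixmier's form, which applies because $U(\mathfrak{g})$ is countable-dimensional over the uncountable algebraically closed field $\mathbb{C}$, to conclude that their endomorphism rings are trivial. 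Everything else is bookkeeping with the formulas already recorded in Subsections~\ref{s9.1} and \ref{s9.2}.
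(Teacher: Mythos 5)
Your proposal is correct and follows essentially the same route as the paper: the type is read off from Formula~\eqref{eq-s9.1-1}, and simple transitivity is deduced by combining transitivity with semi-simplicity of the underlying categories (which the paper, like you, gets from the fact that both categories are additive closures of simple $\mathfrak{g}$-modules). Your extra remarks on Schur's lemma and on why a non-zero invariant ideal in a semi-simple transitive category must be everything are just the unpacking of the paper's one-line argument.
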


\begin{proof}
Simple transitivity follows by combining transitivity 
of the action of $\mathscr{C}$ with the semi-simplicity
of the underlying categories. The identification of the
type follows from Formula~\eqref{eq-s9.1-1}.
\end{proof}

\subsection{Additional properties}\label{s9.8}

\begin{proposition}\label{prop-s9.8-1}
Let $\mathcal{X}$ be an admissible  simple transitive 
$\mathscr{C}$-module category of type $T_\infty$.
Then $\mathcal{X}$ is a semi-simple category.
\end{proposition}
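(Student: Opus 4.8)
The plan is to follow the template of the proof of Proposition~\ref{prop-s8.8-1}: after passing to the abelianization, I would show that $F_1$ sends every simple object to a semi-simple object, and then deduce that the radical of $\mathcal{X}$ is $\mathscr{C}$-stable, so that simple transitivity forces it to vanish. Concretely, I first pass to $\overline{\mathcal{X}}$ and list the indecomposables $X_0,X_1,\dots$ (the indecomposable projectives of $\overline{\mathcal{X}}$) so that $[F_1]$ is given by Formula~\eqref{eq-s9.1-1}, writing $S_0,S_1,\dots$ for the corresponding simple tops. Since $F_1$ is exact, preserves projectives, and is biadjoint to itself (as $L(1)$ is self-dual), and since \eqref{eq-s9.1-1} is symmetric, the matrix of the action of $F_1$ in the basis of simples coincides with its action in the basis of projectives; in particular the composition factors of $F_1(S_j)$ are read off from column $j$ of \eqref{eq-s9.1-1}.

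The heart of the argument is semi-simplicity of each $F_1(S_j)$, and here the loop at the vertex $0$ supplies the base of an induction. For the multiplicities of $S_j$ in the top, respectively socle, of $F_1(S_i)$ I use $t_{ij}=\dim\overline{\mathcal{X}}(F_1(S_i),S_j)$ and $s_{ij}=\dim\overline{\mathcal{X}}(S_j,F_1(S_i))$; self-adjunction gives $t_{ij}=s_{ji}$. The module $F_1(S_0)$ has length two with factors $S_0$ and $S_1$, each of multiplicity one, so it is either semi-simple or uniserial; the single identity $t_{00}=s_{00}$ excludes both uniserial shapes and forces $F_1(S_0)\cong S_0\oplus S_1$. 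Consequently $S_1$ lies in the top and socle of $F_1(S_0)$, and adjunction places $S_0$ in the top and socle of $F_1(S_1)$; since $S_0$ occurs with multiplicity one in $F_1(S_1)$, it splits off, giving $F_1(S_1)\cong S_0\oplus S_2$. Iterating the principle that a multiplicity-one composition factor appearing in both top and socle is a direct summand yields $F_1(S_i)\cong S_{i-1}\oplus S_{i+1}$ for all $i\geq 1$, so every $F_1(S_i)$ is semi-simple.

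From here the argument runs parallel to Proposition~\ref{prop-s8.8-1}. Since $F_1(S_j)$ is semi-simple and has the same composition factors as the top of the projective $F_1(X_j)$, applying $F_1$ to the short exact sequence $0\to\mathrm{Rad}(X_j)\to X_j\to S_j\to 0$ gives $F_1(\mathrm{Rad}(X_j))=\mathrm{Rad}(F_1(X_j))$. A non-zero morphism $\varphi\colon X_i\to X_j$ with $i\neq j$ is radical, so $\varphi(X_i)\subseteq\mathrm{Rad}(X_j)$, whence $F_1(\varphi)$ lands in $\mathrm{Rad}(F_1(X_j))$ and is radical; for a radical endomorphism $\varphi\colon X_i\to X_i$ the image $F_1(\varphi)$ is nilpotent, and because $F_1(X_i)$ has pairwise non-isomorphic indecomposable summands this again forces $F_1(\varphi)$ to be radical. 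Hence the radical of $\mathcal{X}$ is $F_1$-invariant, and since $F_1$ generates $\mathscr{C}$ it is $\mathscr{C}$-invariant; simple transitivity then forces it to be zero, i.e.\ $\mathcal{X}$ is semi-simple.

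The hard part — and the only place where type $T_\infty$ genuinely differs from type $D_\infty$ — is locating the base of the induction proving semi-simplicity of the $F_1(S_j)$. In type $D_\infty$ two leaf vertices give honestly simple images $F_1(S_0)$ and $F_1(S_1)$, which anchor everything; here every $F_1(S_j)$ has length two, and one must instead start the induction at the loop vertex, where the single self-adjunction identity $t_{00}=s_{00}$ is exactly what rules out a non-split extension and gets the propagation going.
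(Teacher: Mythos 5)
Your proposal is correct and follows essentially the same route as the paper's proof: the same symmetric-matrix identification of the action on simples, the same self-adjunction argument anchored at the loop vertex to force $F_1(S_0)\cong S_0\oplus S_1$, the same inductive splitting-off of multiplicity-one factors to get $F_1(S_i)\cong S_{i-1}\oplus S_{i+1}$, and the same radical-stability argument modeled on Proposition~\ref{prop-s8.8-1}. The only difference is cosmetic (the $t_{ij}=s_{ji}$ bookkeeping), so nothing further is needed.
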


\begin{proof}
We need to prove that the radical of $\mathcal{X}$
is $\mathscr{C}$-invariant. Consider the abelianization
$\overline{\mathcal{X}}$. Let $X_0,X_1,\dots$ be the
list of indecomposables in $\mathcal{X}$ such that 
$[F_1]$ is given by \eqref{eq-s9.1-1} in the corresponding
basis of the Grothendieck group. These are the indecomposable
projectives in $\overline{\mathcal{X}}$. Let $S_0,S_1,\dots$
be the corresponding simples in $\overline{\mathcal{X}}$.
Note that \eqref{eq-s9.1-1} is a symmetric matrix and therefore
the matrix of the action of each $F_i$ in the basis 
of projectives coincides with the matrix of the action 
of this $F_i$ in the basis of simples.

In particular, $F_1(S_0)$ has length two with 
simple subquotients $S_0$ and $S_1$, both appearing with 
multiplicity one. Then $S_0$ must appear either in the
top or in the socle of $F_1(S_0)$. However, by adjunction, 
we have
\begin{displaymath}
\overline{\mathcal{X}}(F_1(S_0),S_0)\cong 
\overline{\mathcal{X}}(S_0,F_1(S_0)) 
\end{displaymath}
and thus $S_0$ appears both in the socle and in the top,
which implies that $F_1(S_0)\cong S_0\oplus S_1$.

Next we see that $F_1(S_1)$ has length two with 
simple subquotients $S_0$ and $S_2$, both appearing with 
multiplicity one. By adjunction, 
we have
\begin{displaymath}
\overline{\mathcal{X}}(F_1(S_1),S_0)\cong 
\overline{\mathcal{X}}(S_1,F_1(S_0))\cong \mathbb{C} 
\end{displaymath}
and 
\begin{displaymath}
\overline{\mathcal{X}}(S_0,F_1(S_1))\cong 
\overline{\mathcal{X}}(F_1(S_0),S_1)\cong \mathbb{C}. 
\end{displaymath}
Therefore $S_0$ appears both in the socle and in the top
of $F_1(S_1)$ implying that we have
$F_1(S_1)\cong S_0\oplus S_2$.

Proceeding inductively, we obtain that
$F_1(S_i)\cong S_{i-1}\oplus S_{i+1}$, for all $i\geq 1$.
Let $\tilde{\mathcal{X}}$ be the additive closure of all $S_i$,
where $i\geq 0$.
We have just shown that 
$\tilde{\mathcal{X}}$ is invariant under
the action of $F_1$. Since $F_1$ generates
$\mathscr{C}$, it follows that $\tilde{\mathcal{X}}$ is
$\mathscr{C}$-invariant. In other words, 
$\tilde{\mathcal{X}}$ is a simple transitive 
$\mathscr{C}$-module category of type $T_\infty$.

The rest is similar to the proof of
Proposition~\ref{prop-s4.2-2}. We need to show that 
the radical of $\mathcal{X}$ is $F_1$-invariant.
Let $\varphi:X_i\to X_i$ be a radical morphism,
for some $i$. Then $\varphi$ is nilpotent
since $\mathrm{End}_{\mathcal{X}}(X_i)$
is finite dimensional. Hence $F_1(\varphi)$
is also nilpotent. Since $F_1(X_i)$ does not have
isomorphic summands, it follows that 
$F_1(\varphi)$ is a radical morphism.

Let $\varphi:X_i\to X_j$ be a non-zero morphism,
for $i\neq j$. Then $\varphi(X_i)$ belongs to the
radical of $X_j$. Applying the exact functor $F_1$
to the short exact sequence
\begin{displaymath}
0\to \mathrm{Rad}(X_j) \to
X_j\to S_j\to 0,
\end{displaymath}
we get a short exact sequence
\begin{displaymath}
0\to F_1(\mathrm{Rad}(X_j)) \to
F_1(X_j)\to F_1(S_j)\to 0,
\end{displaymath}
in which $F_1(S_j)$ is isomorphic to the top of $F_1(X_j)$
by the above.
Therefore we have 
\begin{displaymath}
F_1(\mathrm{Rad}(X_{j}))=
\mathrm{Rad}(F_1(X_j)).
\end{displaymath}
As $\varphi(X_i)\subset \mathrm{Rad}(X_{j})$
and $F_1$ is exact, we obtain that 
$F_1(\varphi(X_i))$ is a submodule of
$\mathrm{Rad}(F_1(X_j))$, which means 
that $F_1(\varphi)$ is a radical morphism.
As the radical of $\mathcal{X}$ is generated
by radical morphisms between indecomposable objects,
it follows that the radical of $\mathcal{X}$ 
is $\mathscr{C}$-stable and completes the proof.
\end{proof}

\section{Good and bad news on realizations of type $B_\infty$}\label{s6}

\subsection{A realizations dual to $C_\infty$}\label{s6.1}

The easiest way to find a $B_\infty$-combinatorics is,
of course, by taking the dual of a 
$C_\infty$-combinatorics. Let us do this with the
example from Subsection~\ref{s7.1}.

Let $\mathfrak{g}=\mathfrak{sl}_2$ with
$\varphi$ being the identity. Consider the BGG category
$\mathcal{O}$ for $\mathfrak{g}$ and the 
projective-injective modules $P(\lambda)\in\mathcal{O}$,
for $\lambda\in\{-1,-2,\dots\}$.
Let $\mathcal{K}_1$ be the additive closure of these
modules. Then $\mathcal{K}_1$ is a simple transitive
$\mathscr{C}$-module category of type $C_\infty$,
see Subsection~\ref{s7.1} and Proposition~\ref{prop-s7.7-1}.

Consider the projective abelianization 
$\overline{\mathcal{K}_1}$ of $\mathcal{K}_1$,
see Subsection~\ref{s2.7}. 

Each indecomposable projective object $P(\lambda)$
of $\overline{\mathcal{K}_1}$ has unique simple top,
which we denote by $L(\lambda)$. It can be identified
with the simple highest weight $\mathfrak{sl}_2$-module
with highest weight $\lambda$. Note that here
$\lambda$ is a negative integer. Recall that
\begin{displaymath}
[P(\lambda):L(\lambda')]=
\begin{cases}
2,& \lambda=\lambda'\in\{-2,-3,\dots\};\\
1,& \lambda=\lambda'=-1;\\
0,& \text{otherwise}.
\end{cases}
\end{displaymath}

Now, note that each element of $\mathscr{C}$ acts
as an exact endofunctor of $\overline{\mathcal{K}_1}$.
From \eqref{eq-s4.2-3} it follows that,
in the basis $\{[L(-1)],[L(-2)],\dots\}$
of the Grothendieck group of $\overline{\mathcal{K}_1}$,
the action of $L(1)$ is given by the following matrix:
\begin{equation}\label{eq-s6.1-1}
\left(
\begin{array}{ccccc}
0&1&0&0&\dots\\
2&0&1&0&\dots\\
0&1&0&1&\dots\\
0&0&1&0&\dots\\
\vdots&\vdots&\vdots&\vdots&\ddots
\end{array}
\right)
\end{equation}
which is a type $B_\infty$ matrix (i.e. the transpose
of the type $C_\infty$ matrix).

\subsection{Impossibility of realizations of type $B_\infty$
in our setup}\label{s6.2}

\begin{proposition}\label{prop-s6.2-1}
Locally finitary, admissible, simple  transitive $\mathscr{C}$-module 
categories  of type $B_\infty$ over the complex numbers do not exist.
\end{proposition}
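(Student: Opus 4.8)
The plan is to argue by contradiction: suppose $\mathcal{Y}$ is a locally finitary, admissible, simple transitive $\mathscr{C}$-module category whose action matrix $[F_1]$ is the type $B_\infty$ matrix \eqref{eq-s6.1-1}. I would first pass to the abelianization $\overline{\mathcal{Y}}$, which is an abelian length category in which the indecomposables $Y_0,Y_1,\dots$ of $\mathcal{Y}$ become the indecomposable projectives, with simple tops $S_0,S_1,\dots$. The key structural asymmetry to exploit is that the $B_\infty$ matrix is \emph{not} symmetric: the entry recording how $F_1$ moves from vertex $0$ to vertex $1$ differs from the entry in the opposite direction (one of them is $2$). Consequently, unlike the $D_\infty$ and $T_\infty$ cases treated in Proposition~\ref{prop-s8.8-1} and Proposition~\ref{prop-s9.8-1}, the matrix of the $F_i$-action on projectives will \emph{not} agree with its action on simples, and this mismatch is exactly what I intend to turn into a contradiction.

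**The core obstruction via self-adjointness.**

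The essential tool is that every object of $\mathscr{C}$ is self-dual (stated in Subsection~\ref{s2.2}), so each $F_i$ is biadjoint to itself as an exact endofunctor of $\overline{\mathcal{Y}}$. In particular, for any simples $S,S'$ we have $\overline{\mathcal{Y}}(F_1 S, S') \cong \overline{\mathcal{Y}}(S, F_1 S')$. I would combine this self-adjointness with the composition multiplicities dictated by \eqref{eq-s6.1-1}. Reading off the matrix, $F_1 S_1$ has $S_0$ as a composition factor, so by adjunction $\operatorname{Hom}(S_0, F_1 S_1)\cong \operatorname{Hom}(F_1 S_0, S_1)\neq 0$ and $\operatorname{Hom}(F_1 S_1, S_0)\cong \operatorname{Hom}(S_1, F_1 S_0)\neq 0$, forcing $S_0$ to sit in both the top and socle of $F_1 S_1$. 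The decisive point is the off-diagonal $2$: the matrix entry $m_{0,1}=1$ and $m_{1,0}=2$ encode that $S_0$ appears with multiplicity one in $F_1 S_1$ while $S_1$ appears with multiplicity \emph{two} in $F_1 S_0$. Applying self-adjointness to the latter yields $\dim\operatorname{Hom}(F_1 S_1, S_0)=\dim\operatorname{Hom}(S_1, F_1 S_0)=2$ on one side and $\dim\operatorname{Hom}(S_0,F_1 S_1)=\dim\operatorname{Hom}(F_1 S_0, S_1)=2$ as well; but $F_1 S_1$ has $S_0$ as a composition factor with multiplicity only one (from $m_{0,1}=1$), so $S_0$ cannot appear twice in its head. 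This numerical collision is the heart of the argument.

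**Carrying out the contradiction.**

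Concretely, I would show that the two conditions are incompatible by a dimension count on homomorphism spaces. On the one hand, self-duality of $F_1$ gives $\dim\overline{\mathcal{Y}}(F_1 S_0,\,S_1)=\dim\overline{\mathcal{Y}}(S_0,\,F_1 S_1)$; the right-hand side is at most the multiplicity $[F_1 S_1 : S_0]=m_{0,1}=1$. On the other hand, the left-hand side is at least the multiplicity of $S_1$ in the head of $F_1 S_0$, and since $S_1$ appears in $F_1 S_0$ with total multiplicity $m_{1,0}=2$ while (by the same adjunction argument applied symmetrically) $S_1$ must lie in both top and socle, one is forced into $\dim\overline{\mathcal{Y}}(F_1 S_0, S_1)\geq 2$. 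Comparing $2\leq \dim = \dim \leq 1$ gives the contradiction. I would phrase this cleanly by first establishing that in any admissible simple transitive realization the biadjunction forces every off-diagonal entry of $[F_1]$ to equal its transpose, i.e.\ forces $[F_1]$ to be symmetric — and then observing that the $B_\infty$ matrix \eqref{eq-s6.1-1} is not symmetric, so no such realization exists. This mirrors the remark already recorded in Remark~\ref{rem-s7.7-2}, where symmetry of $[F_1]$ was deduced via \cite[Lemma~8]{AM} in the semi-simple case; here the point is that self-adjointness propagates the symmetry constraint even into the non-semi-simple setting, precisely because adjunction equates $\operatorname{Hom}$-dimensions that the $B_\infty$ pattern makes unequal.

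**Where the difficulty lies.**

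The main obstacle is not the final dimension count but justifying that the adjunction isomorphisms genuinely force the symmetry of $[F_1]$ at the level of \emph{composition multiplicities in the abelianization}, rather than merely at the level of head/socle homomorphisms. I would need to be careful that $\overline{\mathcal{Y}}$ is abelian and that $F_1$ acts exactly (guaranteed by admissibility, per Subsection~\ref{s2.7}), so that $[F_1 S_j : S_i]$ is well-defined and that the biadjunction of $F_1$ descends to $\overline{\mathcal{Y}}$. Once the framework is in place, the contradiction follows quickly from the asymmetry of \eqref{eq-s6.1-1}; the subtlety is entirely in verifying that self-duality in $\mathscr{C}$ yields a genuine biadjunction of the induced endofunctors on the abelianized category, which is where I would spend most of the care.
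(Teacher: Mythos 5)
Your strategy has a fatal flaw: the statement you propose to establish first --- that self-adjointness of $F_1$ forces $[F_1]$ to be symmetric in any admissible simple transitive realization --- is false, and the paper itself contains the counterexample. The category $\mathcal{K}_1$ of Subsection~\ref{s7.1} is a locally finitary, admissible, simple transitive $\mathscr{C}$-module category of type $C_\infty$, whose matrix \eqref{eq-s7.1-1} is not symmetric; your argument, if it worked, would rule it out as well, so it proves too much. The underlying error is a conflation of composition multiplicities with top/socle multiplicities. Adjunction identifies $\dim\overline{\mathcal{Y}}(F_1S_i,S_j)$ with $\dim\overline{\mathcal{Y}}(S_i,F_1S_j)$, i.e.\ the multiplicity of $S_j$ in the \emph{top} of $F_1S_i$ with the multiplicity of $S_i$ in the \emph{socle} of $F_1S_j$; it controls nothing about full composition series, so no inequality of the form $2\leq 1$ drops out. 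What \cite[Lemma~8]{AM} actually gives is that the matrix in the basis of simples is the \emph{transpose} of the matrix in the basis of projectives, which forces symmetry only when simples are projective (cf.\ Remark~\ref{rem-s7.7-2}). You also have this transpose backwards: with $[F_1]$ equal to \eqref{eq-s6.1-1} on the indecomposables (projectives), the composition multiplicities on simples are given by the transpose, so $F_1S_0\cong S_1$ and $F_1S_1$ has factors $S_0$ (twice) and $S_2$ (once) --- the opposite of what you wrote --- and adjunction then yields $\dim\Hom(F_1S_1,S_0)=\dim\Hom(S_1,F_1S_0)=\dim\Hom(S_1,S_1)=1$ on both sides: a uniserial $F_1S_1$ with top and socle $S_0$ and middle $S_2$ is perfectly consistent, so no contradiction arises at this stage. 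Your auxiliary inference that ``total multiplicity $2$ plus presence in both top and socle forces top multiplicity $\geq 2$'' also fails for exactly such a uniserial module.

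The paper's proof requires one further ingredient that your proposal is missing: the Clebsch--Gordan relation $F_1\circ F_1\cong F_0\oplus F_2$. Since $F_1(N_1)\cong N_2$ is simple, adjunction gives $\dim\Hom(N_1,F_1F_1(N_1))=\dim\Hom(F_1N_1,F_1N_1)=\dim\Hom(N_2,N_2)=1$, and likewise for $\Hom(F_1F_1(N_1),N_1)$; hence $N_1$ occurs exactly once in the top and exactly once in the socle of $F_1F_1(N_1)$. But $F_1F_1(N_1)\cong N_1\oplus F_2(N_1)$ has $N_1$ as a direct summand, and $F_2(N_1)$ has length two with factors $N_1$ and $N_3$, so $N_1$ must lie in the top or the socle of $F_2(N_1)$, producing a second copy of $N_1$ in the top or socle of $F_1F_1(N_1)$ --- that is the contradiction. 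If you want to repair your write-up, this composition-of-functors step is the piece to add.
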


\begin{proof}
Let $\mathcal{X}$ be a simple transitive $\mathscr{C}$-module
category of type $B_\infty$. Let $X_1,X_2,\dots$ be fixed representatives
of isomorphism classes  of   indecomposable objects in  $\mathcal{X}$
such that, with respect to them, the matrix $[L(1)]$  is given by
\eqref{eq-s6.1-1}. Consider the abelianization 
$\overline{\mathcal{X}}$ as in the previous subsection and,
for $i=1,2,\dots$, denote  by $N_i$   the simple top of $P_i:=(0\to X_i)$
in $\overline{\mathcal{X}}$. 

All elements of $\mathscr{C}$ operate on $\overline{\mathcal{X}}$
as exact self-adjoint functors. Therefore we can consider the matrix 
of each such element written in the basis of the Grothendieck  group of
$\overline{\mathcal{X}}$ given by simple objects. By
\cite[Lemma~8]{AM}, this matrix is just the transpose of the 
matrix in the basis of the indecomposable  projective objects.
Therefore the corresponding matrix for $L(1)$  is given  by
\eqref{eq-s7.1-1}, that is, it has type $C_\infty$.

Directly from
this matrix of $L(1)$, we see that $F_1(N_1)\cong N_2$.  Also,
$F_1(N_2)$ is a module of length three with composition
subquotients $N_1$, appearing with multiplicity $2$,  and
$N_3$,  appearing with multiplicity one. As $N_2$ is a simple
object and $\mathbb{C}$ is algebraically closed, we have
$\overline{\mathcal{X}}(N_2,N_2)=\mathbb{C}$. By adjunction,
we thus have
\begin{equation}\label{eq-s6.2-2}
\mathbb{C}= \overline{\mathcal{X}}(N_2,N_2)=
\overline{\mathcal{X}}(F_1(N_1),F_1(N_1))=
\overline{\mathcal{X}}(N_1,F_1\circ F_1(N_1)).
\end{equation}
Similarly, we also have
\begin{equation}\label{eq-s6.2-3}
\mathbb{C}= 
\overline{\mathcal{X}}(F_1\circ F_1(N_1),N_1).
\end{equation}

We have $F_1\circ F_1\cong F_0\oplus F_2$. Therefore
\begin{displaymath}
F_1\circ F_1(N_1)\cong
F_0(N_1)\oplus F_2(N_1)\cong
N_1\oplus F_2(N_1).
\end{displaymath}
The module $F_2(N_1)$ has length two with composition
subquotients $N_1$ and $N_3$. Therefore $N_1$ must appear
either in the top or in the socle of $F_2(N_1)$.
In the first case,  we have two copies of $N_1$ in the top of
$F_1\circ F_1(N_1)$ leading to a contradiction with \eqref{eq-s6.2-3},
In the second case,  we get a similar contradiction with \eqref{eq-s6.2-2}.
This proves the claim.
\end{proof}

Proposition~\ref{prop-s6.2-1} implies that, in order 
to have a chance to construct a locally finitary 
simple  transitive $\mathscr{C}$-module 
category of type $B_\infty$, we need to change the base field
to some field that is not algebraically closed, which would
allow the endomorphism algebra of a simple object to have
dimension greater than one. Note that one can satisfy
the equalities \eqref{eq-s6.2-3} and \eqref{eq-s6.2-2},
for example, if $\overline{\mathcal{X}}(N_2,N_2)$ has dimension two
while $F_2(N_1)\cong N_1\oplus N_3$
and $\overline{\mathcal{X}}(N_1,N_1)$ has dimension one.

\section{General results}\label{s10}

\subsection{Module categories inside
$\mathfrak{sl}_2$-mod generated by simple modules}\label{s10.1}

Let $\mathfrak{g}=\mathfrak{sl}_2$ with $\varphi$ being the identity. 
Consider the $\mathscr{C}$-module category 
$\mathrm{add}(\mathscr{C}\cdot M)$, where $M$ is a simple
$\mathfrak{sl}_2$-module. By Schur's lemma, 
see \cite[Proposition~2.6.8]{Di}, there is
$\vartheta\in\mathbb{C}$ such that $(\mathtt{c}-\vartheta)M=0$.
We can explicitly classify the types of simple transitive 
subquotients of $\mathrm{add}(\mathscr{C}\cdot M)$.

\begin{theorem}\label{thm-s10.1-1}
Under the above assumptions, we have:

\begin{enumerate}[$($a$)$]
\item\label{thm-s10.1-1.1} If $\vartheta$ is not the square of a
half-integer, then $\mathrm{add}(\mathscr{C}\cdot M)$ is a simple
transitive $\mathscr{C}$-module category of type $A_\infty^\infty$.
\item\label{thm-s10.1-1.4} If $\vartheta$ is the square of a
half-integer but not an integer and there is a special 
projective functor $\theta$ such that $\theta M\cong M$, 
then $\mathrm{add}(\mathscr{C}\cdot M)$  is a simple transitive 
$\mathscr{C}$-module category of type $T_\infty$.
\item\label{thm-s10.1-1.45} If $\vartheta$ is the square of a
half-integer but not an integer and the condition 
in \eqref{thm-s10.1-1.4} is not satisfied, then 
$\mathrm{add}(\mathscr{C}\cdot M)$  is a simple transitive 
$\mathscr{C}$-module category of type $A_\infty^\infty$.
\item\label{thm-s10.1-1.2} If $M$ is finite dimensional,
then $\mathrm{add}(\mathscr{C}\cdot M)$ is a simple
transitive $\mathscr{C}$-module category of type $A_\infty$.
\item\label{thm-s10.1-1.3} In all other cases, there is 
a short exact sequence 
\begin{displaymath}
0\to\mathcal{M}\to 
\mathrm{add}(\mathscr{C}\cdot M)\to
\mathcal{N}\to 0
\end{displaymath}
in the sense of \cite[Subsection~5.2]{CM}, where $\mathcal{M}$ is a simple
transitive $\mathscr{C}$-module category of type $C_\infty$
and  $\mathcal{N}$ is a simple
transitive $\mathscr{C}$-module category of type $A_\infty$.
\end{enumerate}
\end{theorem}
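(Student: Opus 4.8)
The plan is to route everything through the central character. As $M$ is simple, $\mathtt{c}$ acts on it by a scalar, which I write as $\vartheta=(\lambda+1)^2$; by the classification of projective functors in \cite{BG} recalled in Subsection~\ref{s2.6}, tensoring with $L(1)$ is a projective endofunctor of $\mathcal{Z}$ sending the block $(\lambda+1)^2$ into the blocks $\lambda^2$ and $(\lambda+2)^2$, so that iterating $L(1)\otimes{}_-$ only ever reaches the central characters $(\lambda+1+k)^2$ with $k\in\mathbb{Z}$. Since $\mathscr{C}$ is generated by $L(1)$, the combinatorics of $\mathrm{add}(\mathscr{C}\cdot M)$ is entirely controlled by how $L(1)\otimes{}_-$ permutes and extends the simple subquotients sitting in these blocks, and the decisive data is the orbit of $\lambda$ under the shift $k\mapsto k+1$ together with the reflection $\mu\mapsto-\mu-2$ that identifies $(\mu+1)^2$. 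The one fact from \cite[Theorem~3.3]{BG} I would use repeatedly is that $\theta_{\lambda,\mu}$ is an equivalence, hence preserves simplicity, whenever $\lambda\notin\mathbb{Z}$, and fails to do so only at the integral wall.

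For part~(a), $\lambda\notin\tfrac{1}{2}\mathbb{Z}$ forces the central characters $(\lambda+1+k)^2$ to be pairwise distinct, since a coincidence would require $2(\lambda+1)\in\mathbb{Z}$; as all intervening functors are equivalences, setting $M_0=M$ and letting $M_{k\pm1}$ be the two simple summands of $L(1)\otimes M_k$ produces pairwise non-isomorphic simples with $L(1)\otimes M_k\cong M_{k-1}\oplus M_{k+1}$. The category is semi-simple and transitive, hence simple transitive, and \eqref{eq-s5.1-1} identifies the type as $A_\infty^\infty$. For parts~(b) and~(c), $\lambda\in\tfrac{1}{2}+\mathbb{Z}$: the functors are still equivalences, so the $M_k$ remain simple, but now the reflection folds the orbit two-to-one onto a half-line and the only possible coincidences are $M_k\cong M_{-2(\lambda+1)-k}$, with $-2(\lambda+1)$ an odd integer so that no $M_k$ is its own partner. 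A short propagation argument---apply $L(1)$ to one such isomorphism and match central characters---shows these identifications hold either for all $k$ or for none. The crux is a lemma identifying the endofunctor realising $M\mapsto M_{-2(\lambda+1)}$ with the special functor $\theta_{\lambda,-\lambda-2}$, so that the all-case is precisely the condition $\theta M\cong M$: it yields the fold-with-loop matrix \eqref{eq-s9.1-1}, type $T_\infty$, giving part~(b), while the none-case yields \eqref{eq-s5.1-1}, type $A_\infty^\infty$, giving part~(c). In both, semi-simplicity plus transitivity give simple transitivity. Part~(d) is quickest: if $M=L(m)$ is finite dimensional, Clebsch--Gordan shows that the modules $L(n)\otimes L(m)$ have summands exhausting all $L(n)$, $n\geq 0$, so $\mathrm{add}(\mathscr{C}\cdot M)$ is the left regular category ${}_{\mathscr{C}}\mathscr{C}$ and the statement is Proposition~\ref{prop-s4.2-1}.

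Part~(e) is where I expect the real work. Now $\lambda\in\mathbb{Z}$ and $M$ is infinite dimensional, so the orbit of central characters is the half-line $\{0,1,4,9,\dots\}$ whose end $0$ is the reflection-fixed wall, and crossing that wall is no longer an equivalence: by \cite[Theorem~3.3]{BG} it produces length-two indecomposables with isomorphic subquotients, exactly as in the category $\mathcal{O}$ realization of Subsection~\ref{s7.1} and the Whittaker realization of Subsection~\ref{s7.2}. The plan is to isolate the $\mathscr{C}$-stable subcategory $\mathcal{M}$ spanned by these non-semi-simple, projective-type indecomposables, whose action matrix is \eqref{eq-s7.1-1} of type $C_\infty$, and to identify the quotient $\mathcal{N}=\mathrm{add}(\mathscr{C}\cdot M)/\mathcal{M}$ with the semi-simple shadow of simple tops, whose matrix is \eqref{eq-s4.2-2} of type $A_\infty$, assembling the two into a short exact sequence in the sense of \cite[Subsection~5.2]{CM}. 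The main obstacle is to perform this uniformly for an arbitrary infinite-dimensional simple $M$ of integral central character, rather than only for the highest-weight and Whittaker models already on record; I would address this by reducing the entire structure of $L(1)\otimes M$, and of its two-step compositions through the wall, to the indecomposable projective functors $\theta_{\lambda,\mu}$, so that the $C_\infty$/$A_\infty$ split depends only on the central character and not on the chosen model of $M$. Throughout, each type is read off by computing $[F_1]$ and matching it against the recorded matrices.
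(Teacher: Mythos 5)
Your proposal is correct and follows essentially the same route as the paper: reduce to the classification of indecomposable projective functors from \cite{BG}, use that $\theta_{\lambda,\mu}$ is an equivalence off the integral wall to get semi-simplicity and the $A_\infty^\infty$/$T_\infty$ dichotomy governed by the special functor, and in the integral case split off the $\mathscr{C}$-stable subcategory of length-two wall-crossing indecomposables ($C_\infty$) with semi-simple quotient ($A_\infty$). The only ingredients the paper makes explicit that you leave as a plan are Duflo's theorem (to attach a highest weight module $L(\lambda)$ with the same annihilator to an arbitrary simple $M$) and \cite[Theorem~67]{MS1} (to pin down the structure of $L(1)\otimes N(-1)$ model-independently), which is exactly the uniformity issue you flag in part~(e).
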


Note that, the situations described in 
both Claim~\eqref{thm-s10.1-1.4} and
Claim~\eqref{thm-s10.1-1.45} are possible,
see Subsections~\ref{s5.1} and \ref{s9.1}.

\begin{proof}
Let $M$ be a simple $\mathfrak{sl}_2$-module.
If $M$ is finite dimensional, then 
$\mathrm{add}(\mathscr{C}\cdot M)=\mathscr{C}$
and hence Claim~\eqref{thm-s10.1-1.2} follows 
from Proposition~\ref{prop-s4.2-1}.

Assume now that $M$ is infinite-dimensional and
let $L(\lambda)$ be a simple highest weight module
with the same annihilator in $U(\mathfrak{sl}_2)$ as $M$,
see \cite{Du}. This means, in particular, that 
$\vartheta=(\lambda+1)^2$. Assume first that 
$\lambda\not\in \frac{1}{2}\mathbb{Z}$.
Then 
\begin{equation}\label{eq-s10.1-5}
L(1)\otimes_{\mathbb{C}}M\cong\theta_{\lambda,\lambda+1}M
\oplus \theta_{\lambda,\lambda-1}M
\end{equation}
with both  $\theta_{\lambda,\lambda+1}M$ and
$\theta_{\lambda,\lambda-1}M$ simple (as both
$\theta_{\lambda,\lambda\pm 1}$ are equivalences of
appropriate categories) and non-isomorphic (as the two
modules have different annihilators). Moreover, the annihilators
of both $\theta_{\lambda,\lambda\pm 1}M$ are different from
the annihilator of $M$. Note that
both $\lambda\pm 1\not\in \frac{1}{2}\mathbb{Z}$. Therefore
we can continue this recursively, ending up with 
Claim~\eqref{thm-s10.1-1.1}.

Next let us assume that $\lambda\in\frac{1}{2}+\mathbb{Z}$.
Then we still have the decomposition \eqref{eq-s10.1-5}
with both summands being simple and non-isomorphic 
modules. However, if
$\lambda\in\{-\frac{1}{2},-\frac{3}{2}\}$, then one of the 
two modules $\theta_{\lambda,\lambda\pm 1}M$ will have the
same annihilator as $M$ and thus we might have the situation
that either $\theta_{\lambda,\lambda+ 1}M$ or
$\theta_{\lambda,\lambda-1}M$ is isomorphic to $M$.
This splits our situation in two cases (with both occurring,
see Subsections~\ref{s5.1} and \ref{s9.1}).

In the first case, when neither $\theta_{\lambda,\lambda+ 1}M$ nor
$\theta_{\lambda,\lambda-1}M$ is isomorphic to $M$,
we get that $\mathrm{add}(\mathscr{C}\cdot M)$ is of type
$A_\infty^\infty$ similarly to Subsection~\ref{s5.1}.
In the other case, that is when either $\theta_{\lambda,\lambda+ 1}M$ 
or $\theta_{\lambda,\lambda-1}M$ is isomorphic to $M$,
we get that $\mathrm{add}(\mathscr{C}\cdot M)$ is of type
$T_\infty$ similarly to Subsection~\ref{s9.1}. This proves
Claims~\eqref{thm-s10.1-1.4} and \eqref{thm-s10.1-1.45}.

It remain to consider the case $\lambda\in\mathbb{Z}_{<0}$.
In this case, due to our conventions on the notation for
projective functors, the decomposition \eqref{eq-s10.1-5} becomes
\begin{displaymath}
L(1)\otimes_{\mathbb{C}}M\cong\theta_{-\lambda-2,-\lambda-1}M
\oplus \theta_{-\lambda-2,-\lambda-3}M
\end{displaymath}
with both summands being simple and non-isomorphic 
modules provided that $\lambda\neq -1$. 
If $\lambda=-1$, then we have
\begin{displaymath}
L(1)\otimes_{\mathbb{C}}M\cong\theta_{-1,-2}M. 
\end{displaymath}
In this way, recursively, 
starting from $M$, we can construct simple modules $N(k)$,
for each $k\in\mathbb{Z}_{<0}$, with $M$ being one of these
modules, such that $N(k)$ has the same annihilator as $L(k)$
and 
\begin{displaymath}
L(1)\otimes_{\mathbb{C}}N(k)\cong N(k+1)\oplus N(k-1),
\end{displaymath}
for all $k\neq -1$. Using \cite[Theorem~67]{MS1}, we further have
that the module $N'(-2):=L(1)\otimes_{\mathbb{C}}N(-1)$ is indecomposable,
has the dual numbers as the endomorphism algebra, has
simple top and socle isomorphic to $N(-2)$ and no other 
infinite-dimensional simple subquotients.
Setting $N'(-1):=N(-1)\oplus N(-1)$, we can now recursively 
define the indecomposable modules $N'(k)$, for $k\leq -3$, via 
\begin{displaymath}
L(1)\otimes_{\mathbb{C}}N'(k+1)\cong N'(k)\oplus N'(k+2).
\end{displaymath}
Let $\mathcal{M}$ be the additive closure of all
$N'(k)$, where  $k\leq -1$. Let $\mathcal{N}$ be 
the additive closure of  all $N(k)$, for $k\leq -2$.
Then $\mathcal{M}$ is closed under the action of
$\mathscr{C}$ and the quotient of $\mathrm{add}(\mathscr{C}\cdot M)$
by the ideal generated by $\mathcal{M}$ is equivalent to 
$\mathcal{N}$. Therefore, we get
Claim~\eqref{thm-s10.1-1.3} by the same arguments as
in Subsection~\ref{s7.2}. This completes the proof.
\end{proof}

\subsection{Module categories coming from 
restrictions to subalgebras}\label{s10.2}

Let $\mathfrak{a}$ be a Lie subalgebra of $\mathfrak{sl}_2$.
Then $\mathscr{C}$ acts, via restriction, on 
the category of all finite dimensional $\mathfrak{a}$-modules.
In this subsection we will describe simple transitive 
$\mathscr{C}$-module categories of this kind generated by
simple finite dimensional $\mathfrak{a}$-modules.
In the extreme case $\mathfrak{a}=\mathfrak{sl}_2$, we
just get the left regular $\mathscr{C}$-module category
${}_\mathscr{C}\mathscr{C}$. The other extreme case
$\mathfrak{a}=0$ is trivial.

It remains to consider the two cases when 
$\mathfrak{a}$ has dimension $1$ or $2$. Let us start with
the case $\dim(\mathfrak{a})=1$. Then $\mathfrak{a}$
is the linear span of some non-zero element 
$g\in \mathfrak{sl}_2$, in particular, 
$U(\mathfrak{a})\cong\mathbb{C}[g]$, the polynomial algebra
in $g$. For $\lambda\in\mathbb{C}$, let $\mathbb{C}_\lambda$
denote the one-dimensional $\mathbb{C}[g]$-module on which
$g$ acts via $\lambda$.

We have the following two cases:
\begin{itemize}
\item the element $g$ is nilpotent,
\item the element $g$ is semi-simple with eigenvalues
$\mu\neq 0$ and $-\mu$.
\end{itemize}
In the first case, we have a non-split
short exact sequence 
\begin{equation}\label{eq-s10.2-4}
0\to \mathbb{C}_{\lambda}\to
(\mathrm{Res}^{\mathfrak{sl}_2}_\mathfrak{a}L(1))
\otimes_\mathbb{C}\mathbb{C}_\lambda\to
\mathbb{C}_{\lambda}\to 0.
\end{equation}
For $n\in\mathbb{Z}_{>0}$, denote by $M(n,\lambda)$
the $n$-dimensional $\mathbb{C}[g]$-module on which
$g$ acts via an $n\times n$ Jordan cell with 
eigenvalue $\lambda$. Let $\mathcal{Y}(\lambda)$ be the
additive closure of $M(n,\lambda)$, for all $n$,
and let $\widetilde{\mathcal{Y}}(\lambda)$ be the 
semi-simplification of $\mathcal{Y}(\lambda)$, that is the 
quotient of $\mathcal{Y}(\lambda)$ by its radical.

In the second case, we have 
\begin{equation}\label{eq-s10.2-3}
(\mathrm{Res}^{\mathfrak{sl}_2}_\mathfrak{a}L(1))
\otimes_\mathbb{C}\mathbb{C}_\lambda\cong
\mathbb{C}_{\lambda+\mu}\oplus
\mathbb{C}_{\lambda-\mu}.
\end{equation}
Let  $\mathcal{X}=\mathcal{X}(\lambda)$ denote 
the additive closure of all $\mathbb{C}_{\lambda+n\mu}$,
where $n\in \mathbb{Z}$.

\begin{proposition}\label{prop-s10.2-1}
We have the following two cases:
\begin{enumerate}[$($a$)$]
\item\label{prop-s10.2-1.1} If $g$ is nilpotent,
then, for each  $\lambda\in\mathbb{C}$, the 
category  $\mathcal{Y}(\lambda)$ is a transitive
$\mathscr{C}$-module category of type $A_\infty$
with simple transitive quotient 
$\widetilde{\mathcal{Y}}(\lambda)$.
\item\label{prop-s10.2-1.2} If 
$g$ is semi-simple, then, for each 
$\lambda\in\mathbb{C}$, the category 
$\mathcal{X}(\lambda)$ is a simple transitive 
$\mathscr{C}$-module category of type $A_\infty^\infty$.
\end{enumerate}
\end{proposition}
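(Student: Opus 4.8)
The plan is to handle both cases by the same three-step recipe: pin down the matrix $[L(1)]$ from a restriction-and-tensor computation, read off transitivity and the Dynkin type from it, and, in case~\eqref{prop-s10.2-1.1}, produce the simple transitive quotient by semisimplification. Case~\eqref{prop-s10.2-1.2} is the quicker one. Since $g$ is semisimple with eigenvalues $\pm\mu$, its action on the standard module $L(1)$ is diagonalizable with eigenvalues $\mu$ and $-\mu$, so $\mathrm{Res}^{\mathfrak{sl}_2}_\mathfrak{a}L(1)\cong\mathbb{C}_\mu\oplus\mathbb{C}_{-\mu}$, and tensoring gives Formula~\eqref{eq-s10.2-3}. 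As $\mu\neq0$, the simple one-dimensional modules $\mathbb{C}_{\lambda+n\mu}$, $n\in\mathbb{Z}$, are pairwise non-isomorphic, so $\mathcal{X}(\lambda)$ is semisimple with simple objects indexed by $\mathbb{Z}$, and \eqref{eq-s10.2-3} shows that in this basis $[L(1)]$ is exactly the bi-infinite tridiagonal matrix \eqref{eq-s5.1-1}. Hence $\mathcal{X}(\lambda)$ is transitive, and being semisimple and transitive it is simple transitive of type $A_\infty^\infty$, exactly as for $\mathcal{N}_1$ in Proposition~\ref{prop-s5.7-1}.

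For case~\eqref{prop-s10.2-1.1} the engine is a tensor product computation for Jordan blocks. Since $g$ is nilpotent and nonzero, it acts on $L(1)$ as a single nilpotent Jordan cell of size two, so $\mathrm{Res}^{\mathfrak{sl}_2}_\mathfrak{a}L(1)\cong M(2,0)$. On $M(2,0)\otimes_\mathbb{C}M(n,\lambda)$ the element $g$ acts through the comultiplication as $J_2(0)\otimes\mathrm{Id}+\mathrm{Id}\otimes J_n(\lambda)$, whose only eigenvalue is $\lambda$ and whose nilpotent part is $N_2\otimes\mathrm{Id}+\mathrm{Id}\otimes N_n$. Computing the ranks of the powers of this nilpotent operator (the nilpotent analogue of the Clebsch--Gordan rule) yields
\begin{displaymath}
(\mathrm{Res}^{\mathfrak{sl}_2}_\mathfrak{a}L(1))\otimes_\mathbb{C}M(n,\lambda)\cong M(n-1,\lambda)\oplus M(n+1,\lambda),
\end{displaymath}
with the convention $M(0,\lambda)=0$; the case $n=1$ recovers the non-split extension \eqref{eq-s10.2-4}. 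Hence in the basis $M(1,\lambda),M(2,\lambda),\dots$ the matrix $[L(1)]$ is the type $A_\infty$ matrix \eqref{eq-s4.2-2}, so $\mathcal{Y}(\lambda)$ is a transitive $\mathscr{C}$-module category of type $A_\infty$. It is \emph{not} semisimple, since $\mathrm{End}(M(n,\lambda))\cong\mathbb{C}[g]/(g-\lambda)^n$ has nilpotent radical for $n\geq2$.

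It then remains to identify $\widetilde{\mathcal{Y}}(\lambda)$ as the simple transitive quotient, and here lies the main work: I must show that the radical of $\mathcal{Y}(\lambda)$ is $\mathscr{C}$-stable. I would do this by transcribing the argument of Proposition~\ref{prop-s4.2-2}. Passing to the abelianization $\overline{\mathcal{Y}(\lambda)}$, the $M(n,\lambda)$ become the indecomposable projectives $P_n$ with simple tops $N_n$, and $F_1(P_n)\cong P_{n-1}\oplus P_{n+1}$ by the formula above; since \eqref{eq-s4.2-2} is symmetric, the analogue of Lemma~\ref{lem-s4.2-3} gives $F_1(N_n)\cong N_{n-1}\oplus N_{n+1}$, so $F_1(N_n)$ is the semisimple top of $F_1(P_n)$ and $F_1(\mathrm{Rad}(P_n))=\mathrm{Rad}(F_1(P_n))$. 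Exactly as in Proposition~\ref{prop-s4.2-2}, a radical morphism $\varphi:M(m,\lambda)\to M(n,\lambda)$ is then carried by $F_1$ to a radical morphism: when $|m-n|\neq0,2$ the summands of source and target share nothing; when $m=n$ the morphism is nilpotent and $F_1(M(n,\lambda))$ is multiplicity-free; and when $m=n\pm2$ the image lands in the radical, which $F_1$ preserves. As $F_1$ generates $\mathscr{C}$, the radical is $\mathscr{C}$-stable, so $\widetilde{\mathcal{Y}}(\lambda)=\mathcal{Y}(\lambda)/\mathrm{Rad}$ is a $\mathscr{C}$-module category; it is semisimple and inherits the transitive matrix \eqref{eq-s4.2-2}, hence is simple transitive of type $A_\infty$. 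The hard part is precisely this $m=n\pm2$ step of the radical-stability check; the rest is the bookkeeping already carried out for $A_\infty$.
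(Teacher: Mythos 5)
Your proof is correct, and case~\eqref{prop-s10.2-1.2} coincides with the paper's argument, but for the key step of case~\eqref{prop-s10.2-1.1} --- the $\mathscr{C}$-stability of the radical of $\mathcal{Y}(\lambda)$ --- you take a genuinely heavier route than the paper does. You transcribe the abelianization machinery of Proposition~\ref{prop-s4.2-2}: pass to $\overline{\mathcal{Y}(\lambda)}$, establish the analogue of Lemma~\ref{lem-s4.2-3} to get $F_1(\mathrm{Rad}(P_n))=\mathrm{Rad}(F_1(P_n))$, and split into the cases $|m-n|\neq 0,2$, $m=n$, and $m=n\pm 2$, identifying the last as the hard part. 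The paper instead exploits the very concrete structure of $\mathcal{Y}(\lambda)$: its radical is \emph{generated}, as an ideal, by the irreducible morphisms, namely the canonical projections $M(n,\lambda)\to M(n-1,\lambda)$ and injections $M(n,\lambda)\to M(n+1,\lambda)$ (this is the Auslander--Reiten quiver of Jordan blocks). Applying $F_1$ to such an index-shift-by-one morphism produces a morphism from $M(n+1,\lambda)\oplus M(n-1,\lambda)$ to $M(n,\lambda)\oplus M(n-2,\lambda)$ (or the degenerate versions for small $n$), and since domain and codomain have no isomorphic indecomposable summands, this is automatically radical. So the $m=n\pm 2$ difficulty you isolate simply never arises in the paper: it only appears if one insists on checking stability on \emph{all} radical morphisms rather than on a generating set. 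Your version buys generality (it is the argument one would need for an abstract transitive category with this $[F_1]$-matrix, and it does go through here since the matrix \eqref{eq-s4.2-2} has the first-column property needed for the analogue of Lemma~\ref{lem-s4.2-3}); the paper's version buys brevity and avoids the abelianization altogether. Your Jordan-block computation of $(\mathrm{Res}^{\mathfrak{sl}_2}_{\mathfrak{a}}L(1))\otimes_{\mathbb{C}}M(n,\lambda)$ and the non-semisimplicity remark are both fine and match what the paper leaves as ``a direct computation.''
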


\begin{proof}
If $g$ is nilpotent, then  Formula~\eqref{eq-s10.2-4},
which can be written as
\begin{displaymath}
(\mathrm{Res}^{\mathfrak{sl}_2}_\mathfrak{a}L(1))
\otimes_\mathbb{C}M(1,\lambda)\cong M(2,\lambda)
\end{displaymath}
generalizes, by a direct computation, to 
\begin{displaymath}
(\mathrm{Res}^{\mathfrak{sl}_2}_\mathfrak{a}L(1))
\otimes_\mathbb{C}M(n,\lambda)\cong
M(n+1,\lambda)\oplus M(n-1,\lambda), 
\end{displaymath}
for $n>1$. This implies that $\mathcal{Y}(\lambda)$ is a transitive
$\mathscr{C}$-module category of type $A_\infty$.
In order to prove that the simple transitive 
quotient of $\mathcal{Y}(\lambda)$ is 
$\widetilde{\mathcal{Y}}(\lambda)$, it remains to show
that the radical $\mathcal{R}$ of $\mathcal{Y}(\lambda)$ is 
$\mathscr{C}$-invariant. Since $\mathscr{C}$
is generated by $L(1)$, we just need to show that 
$\mathcal{R}$ is invariant under the action of $L(1)$.

It is easy to see that $\mathcal{R}$ is generated by 
the projections $M(n,\lambda)\tto M(n-1,\lambda)$
and the injections $M(n,\lambda)\tto M(n+1,\lambda)$.
Applying $L(1)$ to the former, we get a projection
of $M(n-1,\lambda)\oplus M(n+1,\lambda)$ onto
$M(n,\lambda)\oplus M(n-2,\lambda)$
(or just onto $M(2,\lambda)$, if $n=2$),
which is obviously in $\mathcal{R}$ since the domain
and the codomain of this projection do not have 
isomorphic indecomposable summands. Similarly,
applying $L(1)$ to the latter, we get an injection
of $M(n+1,\lambda)\oplus M(n-1,\lambda)$ 
(or just of $M(2,\lambda)$, if $n=1$),
into $M(n+2,\lambda)\oplus M(n,\lambda)$,
which is obviously in $\mathcal{R}$ for the same
reasons as above. This implies that
$\mathcal{R}$ is $\mathscr{C}$-invariant
and completes the proof of Claim~\eqref{prop-s10.2-1.1}.

If $g$ is semi-simple, then $\mathcal{X}(\lambda)$
is a semi-simple and transitive, hence simple transitive, 
$\mathscr{C}$-module category of type $A_\infty^\infty$
thanks to Formula~\eqref{eq-s10.2-3}. 
Claim~\eqref{prop-s10.2-1.2} follows and the proof is complete.
\end{proof}

We proceed with the case $\dim(\mathfrak{a})=2$. 
Up to an inner automorphism, the only two-dimensional
subalgebra $\mathfrak{a}$ of $\mathfrak{sl}_2$
is the standard Borel subalgebra $\mathfrak{b}$ 
generated by $h$ and $e$.
All simple finite dimensional modules over this algebra
have dimension one and have the following form: 
for each $\lambda\in\mathbb{C}$, we have 
the unique one-dimensional $\mathfrak{b}$-module
$Q(\lambda)$ which is annihilated by $e$ and on which
$h$ acts via $\lambda$. The $\mathscr{C}$-module category
$\mathrm{add}(\mathscr{C}\cdot Q(\lambda))$
appears in Proposition~\ref{lem-s5.87-3}
where it was denoted by $\mathcal{N}_6$.
In Proposition~\ref{lem-s5.87-3} it was shown that 
this is a semi-simple simple transitive 
$\mathscr{C}$-module category of type $A_\infty$.
Moreover, as  a $\mathscr{C}$-module category,
it does not depend on $\lambda$, up to equivalence.

\vspace{2mm}

\noindent
V.~M.: Department of Mathematics, Uppsala University, Box. 480,
SE-75106, Uppsala,\\ SWEDEN, email: {\tt mazor\symbol{64}math.uu.se}

\noindent
X.~Z.: School of Mathematics and Statistics, 
Ningbo University, Ningbo, 
Zhejiang, 315211, China, email: 
{\tt zhuxiaoyu1\symbol{64}nbu.edu.cn}

\end{document}